\definecolor{darkblue}{RGB}{0,0,160}
\def\e{{\rm e}}
\def\eps{\varepsilon}
\def\d{{\rm d}}
\def\dist{{\rm dist}}
\def\s {{\mathsf s}}
\def\R {\mathbb{R}}
\def\N {\mathbb{N}}
\DeclareMathOperator*\outsup{outsup}
\def\Mod {{\mathrm{Mod}}}
\def\Dil {{\mathrm{Dil}}}
\def\D {{\mathcal D}}
\def\J {{\mathbb J}}
\def\I {{\mathcal I}}
\def\F {{\mathcal F}}
\def\S {\mathcal{S}}
\def\size {{\mathsf{size}}}
\def\supp {{\mathrm{supp}\,}}
\def\dense {{\mathsf{dense}}}
\def\T {{\mathcal T}}
\def\M {{\mathrm M}}
\def\Z {{\mathbb Z}}
\def\1 {{\mbox{\boldmath 1}}}
\def \l {\langle}
\def \r {\rangle}
\def \scl{\mathsf{scl}}
\def \and{\quad\text{and}\quad}
\newcommand{\cic}{\bm}
\newcommand{\ds}{\displaystyle}
\newcommand{\ch}{\mathsf{ch}}
\def\Xint#1{\mathchoice
	{\XXint\displaystyle\textstyle{#1}}%
	{\XXint\textstyle\scriptstyle{#1}}%
	{\XXint\scriptstyle\scriptscriptstyle{#1}}%
	{\XXint\scriptscriptstyle\scriptscriptstyle{#1}}%
	\!\int}
\def\XXint#1#2#3{{\setbox0=\hbox{$#1{#2#3}{\int}$}
		\vcenter{\hbox{$#2#3$}}\kern-.5\wd0}}
\def\avgint{\Xint-}
\DeclareFontFamily{OMX}{MnSymbolE}{}
\DeclareSymbolFont{MnLargeSymbols}{OMX}{MnSymbolE}{m}{n}
\DeclareFontShape{OMX}{MnSymbolE}{m}{n}{
    <-6>  MnSymbolE5
   <6-7>  MnSymbolE6
   <7-8>  MnSymbolE7
   <8-9>  MnSymbolE8
   <9-10> MnSymbolE9
  <10-12> MnSymbolE10
  <12->   MnSymbolE12
}{}
\DeclareFontShape{OMX}{MnSymbolE}{b}{n}{
    <-6>  MnSymbolE-Bold5
   <6-7>  MnSymbolE-Bold6
   <7-8>  MnSymbolE-Bold7
   <8-9>  MnSymbolE-Bold8
   <9-10> MnSymbolE-Bold9
  <10-12> MnSymbolE-Bold10
  <12->   MnSymbolE-Bold12
}{}
\let\llangle\@undefined
\let\rrangle\@undefined
\DeclareMathDelimiter{\llangle}{\mathopen}%
                     {MnLargeSymbols}{'164}{MnLargeSymbols}{'164}
\DeclareMathDelimiter{\rrangle}{\mathclose}%
                     {MnLargeSymbols}{'171}{MnLargeSymbols}{'171}
\def \no#1#2#3 {{\bf #1} (#3), #2.}
\def \eds#1#2#3 {#1, #2, #3.}
\newcounter{counter}
\numberwithin{equation}{section}
\numberwithin{counter2}{section}
\newtheorem{proposition}[subsection]{Proposition}
\newtheorem{theorem}[counter]{Theorem}
\newtheorem{corollary}{Corollary}
\newtheorem{lemma}[subsection]{Lemma}
\theoremstyle{definition}
\newtheorem*{remark*}{Remark}
\newtheorem*{warn*}{A word of warning}
\newtheorem{remark}[subsection]{Remark} 
\theoremstyle{plain}
\numberwithin{corollary}{counter}
\newcommand{\asqref}[1]{\textup{\tagform@{\ref*{#1}}}}
\numberwithin{figure}{section}
\begin{document}

	\title[Weak-type Carleson theorem]{The weak-type  Carleson theorem via wave packet estimates} 
	
	\author[F.\ Di Plinio]{Francesco Di Plinio} 
\address[F.\ Di Plinio]{Dipartimento di Matematica e Applicazioni, Universit\`a di Napoli \\ \newline \indent Via Cintia, Monte S.\ Angelo 80126 Napoli, Italy}
\email{\href{mailto:francesco.diplinio@unina.it}{\textnormal{francesco.diplinio@unina.it}}}
	
	\author[A. Fragkos]{Anastasios Fragkos}\address[A. Fragkos]{Department of Mathematics, Washington University in Saint Louis\\ \newline \indent 1 Brookings Drive, Saint Louis, Mo 63130, USA}
\email{\href{mailto:anastasiosfragkos@wustl.edu}{\textnormal{anastasiosfragkos@wustl.edu}}}

	\thanks{F. Di Plinio and A.\ Fragkos  have been partially supported by the National Science Foundation under the grants   NSF-DMS-2000510, NSF-DMS-2054863}

	\subjclass[2010]{Primary: 42B20. Secondary: 42A20, 42B25}
	\keywords{Carleson operator, weak-type bounds, sparse bounds, pointwise convergence of Fourier series, wave packet transform, multi-frequency decomposition, bilinear Hilbert transform}
	
	\maketitle
	\begin{abstract} We prove that  the weak-$L^{p}$ norms, and in fact the sparse $(p,1)$-norms, of the  Carleson maximal partial Fourier sum operator are
	 $\lesssim (p-1)^{-1}$ as $p\to 1^+$. This is an improvement on the  Carleson-Hunt theorem, where the same upper bound on the growth order is obtained for the restricted weak-$L^p$ type norm, and which was the strongest quantitative bound prior to our result. Furthermore, our sparse $(p,1)$-norms bound  imply new and stronger results   at the endpoint $p=1$. In particular, we obtain that the Fourier series of functions  from the weighted Arias de Reyna space $ \mathrm{QA}_{\infty}(w) $, which contains the  weighted Antonov space  $L\log L\log\log\log L(\mathbb T; w)$,   converge almost everywhere whenever $w\in A_1$. This    is an  extension of the results of    Antonov and Arias De Reyna,  where $w$  must be  Lebesgue measure.
	  	 	
	The  backbone of our treatment is a new, sharply quantified near-$L^1$ Carleson embedding theorem for the modulation-invariant wave packet transform. The proof of the Carleson embedding relies on a newly developed smooth multi-frequency decomposition which, near the endpoint $p=1$, outperforms the abstract Hilbert space  approach of past works, including the seminal one by Nazarov, Oberlin and Thiele. As a further example of application, we obtain a quantified version of the  family of sparse bounds for the bilinear Hilbert transforms due to Culiuc, Ou and the first author.	  	\end{abstract}
	
	\section{Introduction and main results}
The fundamental question of whether the Fourier series   of a square integrable function
 on the torus $\mathbb T \coloneqq \R / (2\pi\mathbb Z) $ converges Lebesgue a.e.\ $ x\in\mathbb T$ was answered in the affirmative by L.\ Carleson in 1966 \cite{C}, by means of a weak-$L^2$ inequality for the maximal operator
\begin{equation}
\label{carleson0}
\mathcal C f(x) =\sup_{N\in \mathbb Z} \Bigg| \sum_{|\xi|\leq N} \widehat f(\xi) \exp(ix\xi) \Bigg|, \qquad x\in \mathbb T.
\end{equation}
The rich and surprising argument of \cite{C} estimates  $\mathcal C $  pointwise as a maximal modulated Hilbert transform,  outside suitably constructed exceptional sets whose  mass is controlled   by almost-orthogonality. The  distributional estimate  implicit in \cite{C}  was later exploited by Hunt \cite{H} to deduce the family of restricted weak-type $L^p$ bounds
\begin{equation}
	\label{e:ch}	\left\|\mathcal{C}f\right\|_{L^{p,\infty}(\mathbb T)} \leq \frac{Cp^2}{p-1}|F|^{\frac1p}, \qquad F\subset \mathbb T,\; |f|\leq \cic{1}_F, \qquad 1<p<\infty.	\end{equation} 
The estimate \eqref{e:ch} and   interpolation yield that $\mathcal{C}$ is a bounded operator on each $L^p(\mathbb T),$ for $1<p<\infty$. Consequently,  pointwise a.e.\ convergence of the Fourier series holds for $f\in L^p(\mathbb T)$ in the same range. Since \cite{C,H}, several substantially different proofs of Carleson's theorem have appeared: in particular, the celebrated ones by Fefferman \cite{F} and Lacey-Thiele \cite{LT},  one implicit in the return times theorem of Demeter, Lacey, Tao and Thiele \cite{DLTT}, and more recently an improvement of Fefferman's proof \cite{F} due to Lie \cite{Lie13}.  

The primary focus of our work is the behavior of the Carleson operator as $p\to 1^+$. Besides its intrinsic interest,  this question is deeply connected to the pointwise a.e.\ behavior  of  Fourier series in function spaces between $L^1(\mathbb T)$ and $L^p(\mathbb T)$. To exemplify the connection,  Antonov \cite{Antonov96} coupled  the precise information on the growth rate of the restricted weak norm from \eqref{e:ch}   with an approximation argument to deduce  a mixed type estimate, which is the case $w=1$ of \eqref{e:mteantonov} below. This may be leveraged to extend the pointwise convergence result to functions in the Orlicz space $ L\mathrm{\log}_1 L\mathrm{log}_3 L(\mathbb T)$. Antonov's result has been, to date, the strongest known within the Orlicz-Lorentz scale; see    Remark \ref{r:prev} for a more precise statement and \cite{DPCRM} for a thorough discussion of the interplay of weak type and endpoint bounds.

 To this purpose,   all  proofs of Carleson's theorem to date yield  estimates  at best quantitatively equivalent to \eqref{e:ch}. Citing from the preface to Arias de Reyna's  2002 lecture notes, ``To this day [Carleson's] is the proof that gives the finest results about the maximal operator of Fourier series.'' \cite[p. V]{ADRbook}. This appears to be the case even after accounting for the substantial progress in the understanding of modulation invariant singular integrals,   in particular of Carleson's operator, that has occurred in the past twenty years; see e.g.\ \cite{DoObPalss2017,DoLac2012,HytLac,Lie2017,LieStrong2019,Lie2020,NOT2010,OSTTW,Z} in addition to the above mentioned \cite{DLTT,Lie13}. 

\subsection{Main results within the state of the art}The motivating result of the present article  goes beyond  the Carleson-Hunt bound \eqref{e:ch}, upgrading the estimate to the weak $L^p$-type.
\begin{theorem}  \label{cor:carl2}
The maximal operator \eqref{carleson0} obeys the family of estimates
\[
\left\|\mathcal{C}f\right\|_{L^{p,\infty}(\mathbb T)} \leq \frac{C}{p-1}\left\|f\right\|_{L^{p}(\mathbb T)}, \qquad 1<p\leq 2.
\]
The same bounds hold for the   maximal multiplier \eqref{e:mainspper} and for   the real line  analogue \eqref{carleson}.\end{theorem}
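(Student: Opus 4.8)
The plan is to derive Theorem \ref{cor:carl2} from a quantitative sparse bound, which in turn rests on the Carleson embedding advertised in the abstract. First I would linearize the supremum in \eqref{carleson0} by fixing a measurable choice $N\colon \mathbb{T}\to\mathbb{Z}$ and passing to the operator $f\mapsto \sum_{|\xi|\le N(x)}\widehat f(\xi)e^{ix\xi}$, and then run the standard time--frequency discretization: write this linearized operator, up to acceptable errors, as a model sum $\sum_{s\in\mathbf S}\langle f,\varphi_s\rangle\,\varphi_s$ over a finite collection of tiles $s$, with each $\varphi_s$ an $L^2$-normalized wave packet adapted to $s$. It then suffices to prove that the bilinear form $(f,g)\mapsto\sum_s\langle f,\varphi_s\rangle\langle \varphi_s,g\rangle$ is dominated by a sparse $(p,1)$-form with constant $\lesssim(p-1)^{-1}$, uniformly in $\mathbf S$ and $N$; since a sparse $(p,1)$-form controls $\|\cdot\|_{L^{p,\infty}}$ against $\|f\|_{L^p}$ with an absolute loss (a short computation using the Carleson packing property of sparse families), this yields the theorem, and in fact the stronger sparse assertion of the abstract.

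The sparse bound is obtained by the sparse-from-embedding paradigm. One performs a stopping-time decomposition of the tile collection over a sparse family of dyadic cubes $Q$: on each stopping cube the tiles are grouped into trees whose contribution is estimated, via the iterated $\size$/$\en$ structure of the wave packet transform, by the Carleson embedding theorem. The crucial input is the sharp form of that embedding: the tree-organized wave packet coefficients of $f$ embed into the relevant outer $L^p$ space with operator norm $\lesssim(p-1)^{-1}$ as $p\to1^+$. Granting this, the testing inequality $\sum_{s\in\mathbf S(Q)}|\langle f,\varphi_s\rangle\langle\varphi_s,g\rangle|\lesssim (p-1)^{-1}|Q|\,\langle|f|^p\rangle_Q^{1/p}\langle|g|\rangle_Q$ on each stopping cube is immediate, and summing over the sparse family gives the claim.

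The heart of the matter, and the main obstacle, is proving the Carleson embedding with exactly the order $(p-1)^{-1}$. Here I would run a multi-frequency Calder\'on--Zygmund decomposition of $f$ at height $\lambda$: the good part is pointwise $\lesssim\lambda$ and the bad part splits as a sum over disjoint cubes $Q$ of pieces $b_Q$, but each $b_Q$ must be decomposed further according to the finitely many frequencies that the trees rooted near $Q$ can resolve at the scale of $Q$. The classical device for decoupling these frequency pieces --- the abstract Hilbert-space almost-orthogonality estimate of Nazarov, Oberlin and Thiele --- produces, when pushed to the near-$L^1$ regime, an extra factor that degrades the $p$-dependence. Instead one replaces the sharp frequency projections by \emph{smooth} Littlewood--Paley cutoffs adapted to the separated frequencies, so that the resulting error terms are Schwartz-localized and carry genuine off-diagonal decay; the delicate point is to balance the number of active frequencies against the decay harvested from the smoothness so that, after the tree/$\size$ iteration, the total loss is no worse than $(p-1)^{-1}$. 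Tracking every constant through this iteration, and through the step that assembles the outer quasi-norm bound from the single-tree and single-cube estimates, is where the real work lies.

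Finally, the maximal multiplier \eqref{e:mainspper} admits a wave packet model of exactly the same form, so its sparse $(p,1)$-bound --- and hence the weak-$L^p$ estimate --- follows verbatim; the real-line analogue \eqref{carleson} is obtained either by repeating the argument on $\R$ or by a standard transference argument. For $p=2$ there is nothing to prove beyond the classical $L^2$ theory, so the entire content of Theorem \ref{cor:carl2} is the growth rate as $p\to1^+$.
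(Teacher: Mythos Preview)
Your plan is essentially the paper's own: reduce to a model sum, prove a $(p,1)$-sparse bound with constant $(p-1)^{-1}$ via stopping cubes and a near-$L^1$ wave packet embedding built on a smooth multi-frequency Calder\'on--Zygmund decomposition, then read off the weak-$L^p$ bound and transfer to the periodic setting.

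One refinement worth flagging: you locate the $(p-1)^{-1}$ factor in the embedding itself, but in the paper the embedding (Theorem~\ref{mainemb}) lands in a weaker, $p$-dependent outer space $X^{tp',\infty}_2$ with constant \emph{uniform} in $p$; the $(p-1)^{-1}$ appears only afterwards, in a generalized outer H\"older inequality (Proposition~\ref{p:eHolder}) pairing this $X$-norm against the $Y$-norm embedding for the dual function. This matters because the standard outer-$L^{tp'}$ embedding of \cite{DPOu} does blow up polynomially as $p\to 1^+$, and introducing the $X$-spaces precisely to sidestep that blowup is the paper's main technical device---so the step you call ``immediate'' once the embedding is granted is in fact where the new idea enters.
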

 In fact, we obtain Theorem \ref{cor:carl2} as an immediate corollary of a stronger quantitative estimate for the \emph{sparse norms} of the operator $\mathcal C$, described as follows. For $n\geq 2$,  $\vec p=(p_1,\ldots,p_n) \in (0,\infty)^n$,  the  $n$-linear $\vec p$-maximal function  of a tuple $\{f_j\in L^{p_j}_{\mathrm{loc}}(\R^d):1\leq j\leq n\}$ is defined as
\[
\mathrm{M}_{\vec p}(f_1,\ldots, f_n) \coloneqq \sup_{Q } \cic{1}_Q \prod_{j=1}^n \langle f_j\rangle_{p_j,Q}
\]
the supremum being taken over all cubes $Q$ of $\R^d$. See the final paragraph of this introduction for a summary of standard notations. An   $n$-sublinear form  $\Lambda$   acting e.g. on $n$-tuples of functions $f_j\in L^\infty_0(\R^d)$ is $\vec p$-\emph{sparse bounded} if there exists a constant $C>0$ such that
\[
\left| \Lambda (f_1,\ldots, f_n) \right| \leq C \left \| \mathrm{M}_{\vec p}(f_1,\ldots, f_n) \right \|_1
\]
uniformly over all such tuples, and the $\vec p$-\emph{sparse bound} $\|\Lambda\|_{\vec p}$ is the infimum of the set of all such constants. If $T$ is an $(n-1)$-sublinear operator, the quantity $\|T\|_{\vec p}$ indicates the sparse bound $\|\Lambda\|_{\vec p}$ of the $n$-sublinear form
\[
\Lambda (f_1,\ldots, f_n)=\langle T(f_1,\ldots, f_{n-1}), f_n \rangle.
\]
Note that $T$ is a specific formal adjoint of $\Lambda$, and the index $n$ plays a distinguished role.
The equivalence of this formulation with more standard notions of sparse bounds \cite{LN} is thoroughly discussed in \cite{CDOBP,NieZ2021} and references therein.
\begin{theorem} \label{t:A}
 Let $m\in L^\infty(\R) \cap \mathcal C^\infty(\R\setminus\{ 0\}) $ be a smooth H\"ormander-Mihlin multiplier, see \eqref{e:HM0}.
The associated maximally modulated multiplier  
  \begin{equation}
	\label{carleson}	\mathcal{C}f(x)\coloneqq 	\sup_{N \in \R} \left| \int_\R m(\xi-N)\widehat{f}(\xi)  \e^{i x \xi} \, \d \xi\right| \qquad x \in \R,
	\end{equation}
satisfies the family of sparse bounds
	  \begin{equation}
	\label{e:mainsp1}	\left\|\mathcal{C}\right\|_{(p,1)} \leq \frac{C}{p-1}, \qquad 1<p\leq 2	\end{equation}
with a uniform constant $C$. The same estimates hold for the periodic version of \eqref{carleson}
   \begin{equation}
	\label{e:mainspper}	\mathcal{C}f(x)\coloneqq 	\sup_{N \in \mathbb Z} \left| \sum_{\xi \in  \mathbb Z} m(\xi-N)\widehat{f}(\xi)  \e^{i x \xi} \right| \qquad x \in \mathbb T
	\end{equation}
	under the additional transference assumption that ${\displaystyle\lim_{\eps\to 0^+} } \;\avgint_{|t|<\eps} m$ exists.
\end{theorem}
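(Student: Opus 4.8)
I would first reduce the periodic multiplier \eqref{e:mainspper} to the real-line operator \eqref{carleson} by a transference argument, in which the hypothesis that $\lim_{\eps\to 0^+}\avgint_{|t|<\eps}m$ exists is used precisely to give meaning to the sampled symbol at the origin and to identify the periodization of the continuous convolution kernel with the discrete one; the sparse bound then passes between the two settings with no loss. (Running the same reduction with $m=-\tfrac{i}{2}\,\mathrm{sgn}$ --- a bounded multiplier, smooth off the origin, with vanishing symmetric average --- would similarly deduce Theorem~\ref{cor:carl2} for the partial-sum operator \eqref{carleson0} from the present statement.) For \eqref{carleson} itself, I would linearize the supremum over $N$ by a measurable frequency function $N\colon\R\to\R$ and perform a smooth Whitney--Littlewood--Paley decomposition of $m$ away from $\xi=0$; the H\"ormander--Mihlin condition \eqref{e:HM0} guarantees that the resulting pieces have uniformly controlled symbols with rapidly decaying kernels, and, after reassembling them, this represents $\mathcal{C}f(x)$ up to an acceptable error through the modulation-invariant wave packet transform $f\mapsto F=\Ww f$ on the time--scale--frequency space $\X$, evaluated along the graph of $N$. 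It then suffices to prove the sparse bound \eqref{e:mainsp1} for this model object, uniformly over the linearizing data.

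\textbf{From the sparse bound to a localized Carleson embedding.} Dualizing, the task is to estimate $|\langle \mathcal{C}f_1,f_2\rangle|$ by $\tfrac{C}{p-1}\|\mathrm{M}_{(p,1)}(f_1,f_2)\|_1$, and by a by-now-standard recursive stopping-time argument --- selecting, inside a fixed cube $Q_0$, maximal subcubes on which $\langle f_1\rangle_p$ or $\langle f_2\rangle_1$ jumps by a large factor, peeling off the wave packets whose spatial support is not trapped in a stopping cube, and recursing on the stopping cubes, whose union is automatically sparse --- this reduces to the single localized inequality
\[
\big|\langle \mathcal{C}f_1,f_2\rangle_{Q_0}\big|\;\lesssim\;\tfrac{1}{p-1}\,|Q_0|\,\langle f_1\rangle_{p,Q_0}\,\langle f_2\rangle_{1,Q_0}.
\]
By outer H\"older on $\X$ and the single-tree estimate --- the contribution of one tree $T$ is at most $|I_T|$ times the product of the two associated $\size$'s --- this in turn follows from two Carleson embedding theorems: the classical one for $f_2=\cic{1}_F$, which already carries an absolute constant, and, the decisive ingredient, a \emph{near-$L^1$ Carleson embedding}
\[
\|\Ww f\|_{L^{p}(\X,\ \size;\ \mathrm{Carleson})}\;\lesssim\;\tfrac{1}{p-1}\,\|f\|_{L^p(\R)},\qquad 1<p\le 2,
\]
for the wave packet size measured against the Carleson outer measure.

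\textbf{The near-$L^1$ embedding via a smooth multi-frequency decomposition.} I would prove the embedding by a Calder\'on--Zygmund splitting of $f$ at height $\lambda$: $f=g+b$ with $\|g\|_\infty\lesssim\lambda$ and $\|g\|_2^2\lesssim\lambda^{2-p}\|f\|_p^p$, and $b=\sum_j b_j$ with $\supp b_j\subset Q_j$, the $Q_j$ pairwise disjoint and $\sum_j|Q_j|\lesssim\lambda^{-p}\|f\|_p^p$. The good part $g$ is handled by the classical $L^2$ Carleson embedding, whose constant is absolute. For the bad part one must control the $\size$ of $\Ww b$ on the portion of $\X$ not localized to $\bigcup_j Q_j$; the obstruction is that the frequency variable ranges over the image of $N$, which may meet arbitrarily many, arbitrarily placed frequencies at each scale. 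Rather than the Nazarov--Oberlin--Thiele device of sharp frequency-cutoff projections --- whose Hilbert-space operator theory \cite{NOT2010} loses a factor that degenerates as $p\to1^+$ --- I would use \emph{smooth} frequency bumps adapted, at each scale $2^k$, to the $\sim 2^k$-separated frequencies actually in play. The kernel sending $b_j$ to its aggregated wave packet coefficients at that scale is then smooth with rapid decay, so that its tail outside a fixed dilate of $Q_j$ is summable in $L^1$ rather than only in $\ell^2$; summing over the disjoint $Q_j$ and over scales $k$, where overlap between $Q_j$ and scale $2^k$ forces geometric decay in $2^k/|Q_j|$, then yields a clean bound for the bad part with an absolute constant. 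Combining this with the $L^2$ contribution and assembling the distributional information across scales --- the step where the smoothness of the bumps is what prevents a spurious logarithm --- delivers the embedding with the stated constant $(p-1)^{-1}$.

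\textbf{The main obstacle.} The whole argument rests on the final step, whose two delicate points are: (i) designing the smooth multi-frequency decomposition so that it genuinely trades the Hilbert-space orthogonality of \cite{NOT2010} for honest $L^1$ kernel bounds --- hence carries \emph{no} logarithmic loss in the number of frequencies --- while remaining compatible with the tile geometry and the uncertainty principle at each scale; and (ii) the quantitative bookkeeping that reassembles the one-scale estimates, over the $\sim\log\tfrac{1}{p-1}$ scales that actually contribute, into the final $(p-1)^{-1}$ with no loss accumulating in the geometric summation. This is exactly where the improvement over the Carleson--Hunt bound \eqref{e:ch} is won, and I expect the bulk of the effort --- and the only real danger of an extra logarithm --- to lie there.
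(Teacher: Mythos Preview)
Your overall architecture---linearize, reduce to a tile model sum, run a sparse stopping-time recursion, and feed two localized Carleson embeddings into an outer H\"older inequality---matches the paper. You also correctly locate the novelty in a smooth multi-frequency decomposition that sidesteps the Nazarov--Oberlin--Thiele Hilbert-space lemma. But two substantive points differ from what the paper actually does, and the first is a genuine gap.

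\textbf{Where the $(p-1)^{-1}$ comes from.} You claim a near-$L^1$ embedding of the form $\|\Ww f\|_{L^p(\size)}\lesssim (p-1)^{-1}\|f\|_p$ and then pair it with an absolute-constant embedding for $f_2$ via ordinary outer H\"older. The paper does \emph{not} prove this, and it is unclear your sketch does either: the size is $\ell^2$-based, and the known embeddings all land in outer spaces with exponent $\ge 2$. What the paper proves instead (Theorem~\ref{mainemb}) is an embedding of $W[f]$ into a \emph{weaker} norm, the reverse-H\"older outer norm $X^{tp',\infty}_2$, with constant $C_t$ \emph{independent of $p$}. The entire $(p-1)^{-1}$ then appears in a single place: a refined outer H\"older inequality (Proposition~\ref{p:eHolder}) that pairs the $X_2^{tp',\infty}$-norm of $W[f_1]$ with the $Y^{1,\infty}$-norm of $A[f_2]$. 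So the blow-up is not in the embedding at all; it is a feature of the pairing. Your proposal locates it in the wrong step, and your CZ sketch, as written, does not deliver the embedding you claim.

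\textbf{The shape of the decomposition.} Your splitting $f=g+b$ is a standard Calder\'on--Zygmund decomposition at a height $\lambda$, with the multi-frequency work done afterwards on $b$. In the paper the decomposition is \emph{adapted to the forest}: one fixes a set $A$ of outer measure $N$, covers it by trees $\mathcal F$, builds Calder\'on--Zygmund intervals from the tile spatial supports, and then defines $g=\Pi_{\mathbb Q}f$ as the Gabor projection onto a \emph{principal region} $\mathbb Q$ of minimal tiles determined by the tree tops $(I_T,\xi_T)$---see \eqref{e:pr1}--\eqref{e:pr12}. The bad part $b=\Pi_{\mathbb M\setminus\mathbb Q}f$ is then shown to have uniformly small $\size_{2,\star}$ on every tree of $\mathcal F$ (Lemma~\ref{off}), with decay governed by a separation parameter $K$ chosen as a power of $N$. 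The smoothness you invoke is built into the definition of $g$, not applied post hoc to a generic CZ bad part; this is precisely why the remainder inherits the small local norms of $f$ rather than those of the good part, which is the failure mode of the NOT construction you rightly want to avoid.
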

Note that the Carleson maximal operator, on the real line and on the torus respectively, correspond up to symmetries and linear combination with the identity operator to the choice $m= \cic{1}_{(0,\infty)}$ in \eqref{carleson}, \eqref{e:mainspper}. 

The $\vec p$-sparse bounds of $T$ subsume a full range of quantitative weighted norm inequalities of weak and strong type.  We send to the references \cite{LMO,NieZ2021} for a complete list of consequences and for the related extrapolation theory,  and content ourselves with recalling those implications most crucial for our exposition, in the form of corollaries to this main result. Then, the estimates of Theorem \ref{cor:carl2} are derived from the sparse bound of Theorem \ref{t:A} as in e.g.\ \cite[Theorem E]{CoCuDPOu}. Two more corollaries     are of weighted nature. 
The first  is a weighted version of Theorem \ref{cor:carl2}; to wit, a weak type $L^p(w)$ bound for $A_1$ weights with controlled constant. In light of Corollary \ref{cor:a2} below, this is interesting  when $p \to 1^+$ and entails certain weighted Carleson estimates at the endpoint $p=1$ as well. These estimates imply  pointwise convergence of Fourier series for a class which is strictly larger than those of \cite{Antonov96,ADR}.
\begin{corollary} \label{cor:weighted} For  weights $w\in A_1$ and  $1\leq p\leq 2$, define \[\mathsf{K}(w,p)\coloneqq [w]_{A_1}^{\frac1p} [w]_{A_\infty}^{1-\frac	1p} [\mathrm{log}_1[w]_{A_\infty}]^{\frac2p}.\] For  both \eqref{carleson}, \eqref{e:mainspper}, there holds
\begin{equation}
\label{e:weighted1}
\left\| \mathcal{C} f \right\|_{L^{p,\infty}(w)} \leq \frac{C\mathsf{K}(w,p)}{p-1}  \left\|  f \right\|_{L^{p}(w)}, \qquad 1<p\leq 2.
\end{equation}
As a  further corollary of \eqref{e:weighted1},   \eqref{e:mainspper} satisfies the following endpoint estimates:
\begin{align}
&  \label{e:mteantonov} \left\| \mathcal{C} f \right\|_{L^{1,\infty}(\mathbb T; w)} 
\leq \frac{Cq}{q-1} \mathsf{K}(w,1) \left\|f \right\|_{L^{1}(\mathbb T; w)} 
\mathrm{log}_1 \left (  \frac{\left\|f \right\|_{L^{q}(\mathbb T; w)} }{\left\|f \right\|_{L^{1}(\mathbb T; w)} }   \right), \qquad 1<q\leq \infty,
\\ \label{e:qa}
& \left\| \mathcal{C} f \right\|_{L^{1,\infty}(\mathbb T; w)} \leq C\left\|   f \right\|_{\mathrm{QA}_q(w)}, \qquad 1<q\leq \infty,
\\ & \label{e:log3}
 \left\| \mathcal{C} f \right\|_{L^{1,\infty}(\mathbb T; w)} \leq C \left\|   f \right\|_{L\mathrm{log}_1 L\mathrm{log}_3 L(\mathbb T; w)}.
 \end{align} See  \eqref{e:defQA} for the definition of the $\mathrm{QA}_q(w)$-quasinorms.
  Additionally, as a consequence of  \eqref{e:qa},  the Fourier series of $f\in \mathrm{QA}_q(w)$ converges pointwise a.e.\ whenever $w\in A_1$.
\end{corollary}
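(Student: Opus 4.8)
The plan is to descend the implication chain built into the statement: from the $(p,1)$-sparse bound of Theorem~\ref{t:A} to the weighted weak-type inequality \eqref{e:weighted1}; from \eqref{e:weighted1} to the mixed-type bound \eqref{e:mteantonov} by an Antonov-type interpolation, to the Arias de Reyna bound \eqref{e:qa} essentially by unwinding the definition \eqref{e:defQA} of $\mathrm{QA}_q(w)$, and to the Orlicz bound \eqref{e:log3} by a layer-cake embedding; and from \eqref{e:qa} to almost everywhere convergence by the standard maximal-function/density scheme.

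\emph{Step 1 ($\Rightarrow$ \eqref{e:weighted1}).} Fix $1<p\le2$ and $w\in A_1$, and use the sparse bound \eqref{e:mainsp1} not at $p$ itself but at a free exponent $r\in(1,p)$, so that $\|\mathcal C\|_{(r,1)}\le C(r-1)^{-1}$. Feeding this into the quantitative passage from an $(r,1)$-sparse domination to a weighted weak $L^p(w)\to L^{p,\infty}(w)$ estimate (cf.\ \cite{LMO,NieZ2021,CoCuDPOu}) --- localising at a level set, using $w\in A_1\subset A_{p/r}$, and applying the reverse H\"older inequality for $w$ at the self-improving exponent $1+c[w]_{A_\infty}^{-1}$ to control the Carleson measures that appear --- yields a bound of the shape $\|\mathcal C\|_{L^p(w)\to L^{p,\infty}(w)}\lesssim (r-1)^{-1}\,[w]_{A_1}^{1/p}\,[w]_{A_\infty}^{1/r-1/p}\,(1+\log_1[w]_{A_\infty})^{\gamma(p,r)}$, with $\gamma(p,r)$ read off from the counting argument. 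One then chooses $r-1$ comparable to $\min\{\,p-1,\ (\log_1[w]_{A_\infty})^{-1}\,\}$: the first alternative keeps $\|\mathcal C\|_{(r,1)}\lesssim(p-1)^{-1}$, the second turns the excess $[w]_{A_\infty}^{1/r-1}$ into a harmless constant and leaves precisely $[w]_{A_\infty}^{1-1/p}$, while the logarithmic contributions --- one power from the sparse-norm blow-up at the chosen $r$, one from the reverse-H\"older correction --- assemble into $[\log_1[w]_{A_\infty}]^{2/p}$, producing the constant $\mathsf K(w,p)(p-1)^{-1}$ of \eqref{e:weighted1}. This bookkeeping, and in particular checking that the logarithmic powers combine to give exactly $2/p$ uniformly as $p\to1^+$ rather than a larger power of $\log_1[w]_{A_\infty}$ or of $[w]_{A_\infty}$, is where essentially all the quantitative content lies; I expect it to be the main obstacle.

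\emph{Step 2 (endpoint estimates).} For \eqref{e:mteantonov} run Antonov's interpolation argument relative to $w\,\d x$. By scale-invariance of $[\cdot]_{A_1},[\cdot]_{A_\infty}$ and homogeneity, normalise $w(\mathbb T)=1$ and $\|f\|_{L^1(w)}=1$; put $A\coloneqq\|f\|_{L^q(w)}\ge1$ and $s\coloneqq 1-\tfrac1p$, and combine $\|\mathcal Cf\|_{L^{1,\infty}(w)}\le\|\mathcal Cf\|_{L^{p,\infty}(w)}\,w(\mathbb T)^{1-1/p}$ with \eqref{e:weighted1} and the log-convexity bound $\|f\|_{L^p(w)}\le\|f\|_{L^q(w)}^{sq'}$. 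Because $[w]_{A_\infty}\le[w]_{A_1}$ one has $\mathsf K(w,p)=\mathsf K(w,1)\bigl([w]_{A_\infty}/([w]_{A_1}[\log_1[w]_{A_\infty}]^2)\bigr)^{s}\le\mathsf K(w,1)$, and $(p-1)^{-1}\le s^{-1}$, so the whole bound is $\le C s^{-1}\mathsf K(w,1)\,e^{sq'\log A}$; minimising $s^{-1}e^{sq'\log A}$ over the admissible range of $s$ --- the optimum being $s\sim(q'\log_1 A)^{-1}$ --- gives $\le Cq'\,\mathsf K(w,1)\log_1 A$, which is \eqref{e:mteantonov}. Inequality \eqref{e:qa} is then a repackaging of \eqref{e:mteantonov}: decomposing $f$ into the atoms of the $\mathrm{QA}_q(w)$-construction \eqref{e:defQA}, applying \eqref{e:mteantonov} to each atom, and summing the resulting weak-$L^1(w)$ contributions, the logarithmic gains being exactly what the $\mathrm{QA}_q$-bracket is designed to absorb (including the summation). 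Finally \eqref{e:log3} follows from \eqref{e:qa} --- or directly from \eqref{e:mteantonov} --- by the classical layer-cake computation giving the embedding $L\log_1 L\log_3 L(\mathbb T;w)\hookrightarrow\mathrm{QA}_q(w)$ for a suitable finite $q$, unchanged from the unweighted case of \cite{Antonov96}.

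\emph{Step 3 (a.e.\ convergence).} By \eqref{e:qa}, $\mathcal C$ maps $\mathrm{QA}_q(w)$ boundedly into $L^{1,\infty}(\mathbb T;w)$; in particular $\mathcal Cf<\infty$ $w$-a.e., hence Lebesgue-a.e.\ since $w\in A_1$ forces $w>0$ a.e. Trigonometric polynomials are dense in $\mathrm{QA}_q(w)$ --- truncate $f$ in amplitude, the $\mathrm{QA}_q(w)$-quasinorm of the tail tending to $0$ by the absolute continuity built into \eqref{e:defQA}, then mollify the resulting bounded function --- and, as $\|\cdot\|_{\mathrm{QA}_q(w)}\gtrsim\|\cdot\|_{L^1(w)}$, convergence in quasinorm implies convergence in $w$-measure. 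Since partial Fourier sums converge uniformly for polynomials and $\limsup_N|S_Nf-f|\le \mathcal C(f-g)+|f-g|$ for every polynomial $g$, letting $g\to f$ shows $\limsup_N|S_Nf-f|=0$ $w$-a.e., and therefore a.e.
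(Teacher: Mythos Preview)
Your overall route matches the paper's, and Steps 2--3 are essentially what the paper does (the Antonov optimisation, the $\mathrm{QA}_q$ repackaging, the Orlicz embedding, and the standard density argument for a.e.\ convergence). Two points deserve comment.

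\textbf{Step 1.} The paper does \emph{not} run the optimisation-over-$r$ argument you sketch; it simply feeds the $(p,1)$-sparse bound $\|\mathcal C\|_{(p,1)}\le C(p-1)^{-1}$ from Theorem~\ref{t:A} \emph{at the same exponent $p$} into \cite[Theorem~1.4]{DORONIA}, which is a ready-made black box returning precisely the constant $\mathsf K(w,p)$. Your scheme---use $(r,1)$ for some $r<p$, invoke $A_{p/r}$ and reverse H\"older, then optimise $r$---is in the spirit of how such theorems are proved, but as you yourself flag, pinning down the exact exponents (in particular the $2/p$ on the logarithm) is delicate, and your sketch does not actually carry this out. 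So Step~1 as written is a plausible plan rather than a proof; citing \cite{DORONIA} is both shorter and complete.

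\textbf{Step 2, inequality \eqref{e:qa}.} You write that after applying \eqref{e:mteantonov} to each atom $f_j$ one ``sums the resulting weak-$L^1(w)$ contributions''. But $L^{1,\infty}$ is only a quasinorm, and countable subadditivity fails in general; the precise tool the paper invokes is Kalton's log-convexity of $L^{1,\infty}(\mathbb T;w)$ \cite{Kal}, which gives $\big\|\sum_j g_j\big\|_{L^{1,\infty}(w)}\lesssim \sum_j (\mathrm{log}_1 j)\,\|g_j\|_{L^{1,\infty}(w)}$. This is exactly why the $\mathrm{log}_1 j$ factor sits inside the definition \eqref{e:defQA} of $\mathrm{QA}_q(w)$, and it is what makes the sum converge. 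Your parenthetical ``(including the summation)'' hints at this, but you should name Kalton's inequality explicitly---without it the step is a genuine gap.
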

\begin{proof}
\noindent Estimate
\eqref{e:weighted1} is obtained by  using   the $(p,1)$-sparse bound of Theorem \ref{t:A} as the input of \cite[Theorem 1.4]{DORONIA}. 
For \eqref{e:mteantonov}, a consequence of \eqref{e:weighted1} is that
\begin{equation}\label{weakl1proof}
	\left\| \mathcal{C} f \right\|_{L^{1,\infty}(\mathbb T; w)}\leq   \left\| \mathcal{C} f \right\|_{L^{p,\infty}(\mathbb T; w) } \leq  \frac{C\mathsf{K}(w,p)}{p-1} \left \| f \right \|_{L^{p}(\mathbb{T};w)} \leq \frac{C\mathsf{K}(w,1)}{p-1} \left \| f \right \|_{L^{p}(\mathbb{T};w)}  \end{equation} holds whenever $1<p\leq 2$.
For $ p<q $,  $
	 \left \| f \right \|_{L^p(\mathbb{T};w)} \leq \left \| f \right \|_{L^1(\mathbb{T};w)}^{1-q'/p'}  \left \| f \right \| ^{q'/p'}_{L^q(\mathbb{T};w)}, $
and  \eqref{e:mteantonov} follows by using  \eqref{weakl1proof} for   $p$ given by the equation $ p'= \max \left \{2, q'\log \left( {\left \| f\right \|_{L^q(\mathbb{T};w)}}/{\left \| f \right \|_{L^1(\mathbb{T};w)} }\right)\right \}  $. Now,
\eqref{e:qa} is deduced from the definition of $\mathrm{QA}_q(w)$ and Kalton's log-convexity of $L^{1,\infty}(\mathbb T;w)$ \cite{Kal}.
 Estimate \eqref{e:qa} immediately implies \eqref{e:log3} once the (strict) continuous 
inclusion
\begin{equation}
\label{e:orl}
 L\mathrm{log}_1 L \mathrm{log}_3L(\mathbb T;w) \subsetneq \mathrm{QA}_\infty(w).
\end{equation}
is established. This is done repeating with obvious changes the argument of \cite[Sect.\ 3.3]{CGMS}.  Note that the inclusion \eqref{e:orl} is tight in the Orlicz class, under modest assumptions on the fundamental function  \cite{CMR}.
\end{proof}
 Another aspect naturally arising in the pursuit of endpoint estimates and pointwise convergence of Fourier series for spaces near $L^1$ is the sharp quantification of the dependence on the weight constants in the weighted bounds for the Carleson operator. For instance, the next result yields that $\mathcal C: L(\mathrm{log}_1L)^2(\mathbb T)\to L^1(\mathbb T) $, via the extrapolation theory of \cite{CDS}. Note that $L(\mathrm{log}_1L)^2(\mathbb T)$ is the  largest Orlicz space currently known to have this property, a result originally due to Sj\"olin \cite{S}.
 \begin{corollary} \label{cor:a2}
The maximal operator \eqref{carleson} obeys the weighted norm inequality
\[
\left\|\mathcal{C}f\right\|_{L^q(w)}\leq C_q[w]_{A_q}^{\frac{\max\{q,2\}}{q-1}}  \left\|f\right\|_{L^q(w)}, \qquad 1<q<\infty.
\]
The same estimates holds for the periodic version of \eqref{e:mainspper}. 
\end{corollary}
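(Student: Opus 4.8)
The plan is to feed the sparse bound of Theorem~\ref{t:A} into the by-now-routine quantitative passage from sparse domination to weighted norm inequalities, choosing the exponent $p$ in $\|\mathcal C\|_{(p,1)}$ as a function of $q$ and $[w]_{A_q}$ so as to balance the blow-up $(p-1)^{-1}$ of the sparse constant against the gain obtained by working strictly above $L^1$. I would argue for \eqref{carleson}; the estimate for \eqref{e:mainspper} then follows by the transference already used for Theorem~\ref{t:A}.

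The first step is to record the \emph{sharp, factorized} weighted bound carried by a single $(p,1)$-sparse bound: for $1<p<q<\infty$, $w\in A_{q/p}$, and $\sigma\coloneqq w^{1-q'}$,
\[
\|\mathcal C f\|_{L^q(w)}\;\lesssim_{q}\;\|\mathcal C\|_{(p,1)}\,[w]_{A_{q/p}}^{\frac1q}\Big([w]_{A_\infty}^{\frac1{q'}}+[\sigma]_{A_\infty}^{\frac1q}\Big)\|f\|_{L^q(w)},
\]
uniformly as long as $q/p$ stays bounded away from $1$. For $p=1$ this is exactly the Hyt\"onen--P\'erez mixed $A_q$--$A_\infty$ form of the $A_2$ theorem; the general $p$ case is its $L^p$-rescaled sparse analogue, which one reads off from the sparse-form testing underlying \cite{NieZ2021,LMO} and their references (cf.\ also the near-$L^1$ variant \cite{DORONIA} invoked for Corollary~\ref{cor:weighted}). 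The point is the \emph{factorization}: a crude bound of the shape $[w]_{A_{q/p}}^{\max\{1,\,(q/p-1)^{-1}\}}$ carries a spurious $\varepsilon$-perturbation of $\max\{1,(q-1)^{-1}\}$ and would overshoot the target exponent for $q\le 2$ no matter how small $\varepsilon$ is taken, whereas in the display above the powers of the characteristics add up to less than one.

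The second step is the choice of $p$. Given $w\in A_q$, I would apply the sharp reverse H\"older inequality to the dual weight $\sigma$: there exists $\varepsilon\simeq(q-1)[\sigma]_{A_\infty}^{-1}$, which by the sharp threshold is at most a $q$-dependent fraction of $q-1$, such that $w\in A_{q-\varepsilon}$ with $[w]_{A_{q-\varepsilon}}\lesssim_q[w]_{A_q}$, while moreover $[\sigma]_{A_\infty}\lesssim[\sigma]_{A_{q'}}=[w]_{A_q}^{1/(q-1)}$ and $[w]_{A_\infty}\lesssim[w]_{A_q}$. Setting $p\coloneqq q/(q-\varepsilon)$ gives $p\in(1,2]$, makes $q/p=q-\varepsilon$ bounded away from $1$ in a $q$-dependent way, and yields $p-1\simeq\varepsilon/q$, so that Theorem~\ref{t:A} furnishes
\[
\|\mathcal C\|_{(p,1)}\;\lesssim\;(p-1)^{-1}\;\lesssim_q\;[\sigma]_{A_\infty}\;\lesssim\;[w]_{A_q}^{\frac1{q-1}}.
\]
Substituting this, together with $[w]_{A_{q/p}}=[w]_{A_{q-\varepsilon}}\lesssim_q[w]_{A_q}$ and the identities $\frac1q+\frac1{q'}=1$ and $\frac1q+\frac1{q(q-1)}=\frac1{q-1}$, into the first display, the two resulting terms carry $[w]_{A_q}$-exponents
\[
\tfrac1{q-1}+\tfrac1q+\tfrac1{q'}=\tfrac{q}{q-1}
\and
\tfrac1{q-1}+\tfrac1q+\tfrac1{q(q-1)}=\tfrac{2}{q-1};
\]
since $[w]_{A_q}\ge1$, their sum is $\le C_q[w]_{A_q}^{\max\{q,2\}/(q-1)}$, which is the claim for \eqref{carleson}. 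No case distinction on $[w]_{A_q}$ is needed: for $[w]_{A_q}$ bounded the same computation just produces a $q$-dependent constant, which $[w]_{A_q}\ge1$ absorbs.

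The genuine obstacle lies entirely in the first step, namely establishing or locating the sharp mixed-characteristic weighted estimate for $(p,1)$-sparse forms with its factorized right-hand side, since it is this factorization---and not the plain $A_{q/p}$-power bound---that survives the substitution $q/p=q-\varepsilon$ and lands exactly on $\max\{q,2\}/(q-1)$. The remaining ingredients---the sharp reverse H\"older inequality of Hyt\"onen and P\'erez, the optimization of $p$, and the transference of Theorem~\ref{t:A} from $\R$ to $\mathbb T$---are standard.
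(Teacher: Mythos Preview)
Your proposal is correct and follows essentially the same approach as the paper: the paper's own proof is a one-line citation to \cite[Proof of Corollary A.1, (1.5)]{CoCuDPOu} for the passage from the $(p,1)$-sparse bound of Theorem~\ref{t:A} to the weighted estimate, and what you have written is precisely a reconstruction of that passage---feed the sparse bound into the sharp mixed-characteristic estimate for $(p,1)$-sparse forms, then optimize $p$ via the sharp reverse H\"older self-improvement of $A_q$. Your identification of the factorized sparse-to-weighted step as the only nontrivial ingredient is accurate, and it is exactly what the cited reference supplies.
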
 
\begin{proof}The claimed  estimate is derived from the sparse bound of Theorem \ref{t:A} as in \cite[Proof of Corollary A.1, (1.5)]{CoCuDPOu}. \end{proof}
\begin{remark}[Comparison with previous results] \label{r:prev} This remark will place our new results in the context of past literature.
 First of all, the Carleson-Hunt estimate \eqref{e:ch} is quantitatively equivalent  to the generalized restricted weak type bound of Lacey and Thiele \cite{LT}, and strictly stronger than the estimate proved by \cite{F}, which, when phrased as a restricted type estimate, is of type  $\mathcal C:L^{p,1} \to L^{q}$  for $1<q<p$. An alternative formulation of
\eqref{e:ch} is 
\begin{equation}
	\label{e:ch2}	\left\|\mathcal{C}f\right\|_{L^{1,\infty}(\mathbb T)} \leq C |F| \mathrm{log}_1\left(\frac{1}{|F|}\right) , \qquad F\subset \mathbb T,\; |f|\leq \cic{1}_F.	\end{equation} 
 Relying on the smoothness of  the Dirichlet kernel via an approximation argument,  Antonov  \cite{Antonov96}   upgraded \eqref{e:ch2} to a mixed type bound which is exactly estimate \eqref{e:mteantonov} with $w=1$ and $q=\infty$, and deduced the $w=1$ case of \eqref{e:log3}. Further work of Sj\"{o}lin and Soria   \cite{SS} extended Antonov's approach to more general sublinear operators satisfying Carleson-Hunt type bounds as in  \eqref{e:ch2}; see also  \cite{GRMARTSORIA}  for applications of this principle to weighted bounds. Arias de Reyna \cite{ADR} introduced the quasi-Banach spaces 
 $\mathrm{QA}_\infty\coloneqq \mathrm{QA}_\infty(\d x)$  and  noticed that Antonov's result may be phrased in terms of \eqref{e:qa} for $w=1$. The observation of \cite{ADR} is relevant because of the strict inclusion \eqref{e:orl}.
The work \cite{Lie13} by Lie gave a  proof of the Lebesgue case of \eqref{e:mteantonov}, with unspecified  dependence on $1<q<\infty$, without any appeal to approximation arguments of the type used in \cite{Antonov96,SS}. In a nutshell, \cite{Lie13} refines the construction of the forests from Fefferman's proof of Carleson's theorem in a $\mathrm{BMO}$ sense. 
The main result of \cite{Lie13} thus implies  the unweighted case of \eqref{e:qa} via the same log-convexity argument. The work  \cite{Lie13} also contains the observation\footnote{In \cite{Lie13}, the observation that $\mathrm{QA}_\infty$ and $\mathrm{QA}_q$ are the same quasi-Banach space is attributed to L.\ Rodriguez-Piazza.} that  $\mathrm{QA}_\infty$ and $\mathrm{QA}_q$ are equivalent   quasinorms  for each $1<q<\infty$, so that the results of \cite{Lie13} and \cite{ADR} are formally equivalent. 

As far as prior weighted bounds at the endpoint $p=1$, the  work of Carro and Domingo-Salazar \cite{CDS} deduces from the Carleson-Hunt bound  \eqref{e:ch} and extrapolation that the Carleson operator maps   $ L \mathrm{\log}_1 L \mathrm{\log}_{3}L(\mathbb{T};w)$ into  the space $R_1(w) $, which is  a  logarithmic correction of $L^{1,\infty}(\mathbb{T};w)$, and the operator norm dependence on $[w]_{A_1}$ is unspecified. In view  of the strict continuous embeddings $ L^{1,\infty}(\mathbb T;w) \hookrightarrow R_1(w)  $ and \eqref{e:orl}, and of the dependence of $\mathsf{K}(w,1)$ on $[w]_{A_1}$,  our estimate \eqref{e:qa} improves on \cite[Theorem 4.5]{CDS}.  

Corollary  \ref{cor:a2}  is an improvement on the previously best known quantitative estimate for the  $L^q(w)$ norms of maximally modulated multipliers, due to Lerner and the first author \cite{DPLer2013}. In particular, the extra $ \mathrm{log}_2  [w]_{A_q}$ term appearing in \cite[Corollary 1.2 (ii)]{DPLer2013} is shown to be unnecessary. 

In summary, the weak-$L^p$ bound of Theorem \ref{cor:carl2}, and \emph{a fortiori} the $(p,1) $-sparse estimate of Theorem \ref{t:A}, sharpen the Carleson-Hunt bound \eqref{e:ch}. Theorem \ref{t:A} also yields upgraded versions of   previous results at $p=1$, which are all consequences of  \eqref{e:ch}.
In particular, Corollary  \ref{cor:weighted}   ensures that the Fourier series of any function in the class  \[
X\coloneqq\left\{f\in L^1(\mathbb T):  f\in \mathrm{QA}_\infty(w) \textrm{ for some }  w\in A_1 \right\}\]
converges  almost everywhere. We stress that the class $X$ is not just formally larger than $\mathrm{QA}_\infty$. For instance, for 
\[
w(x) =\frac{1}{|\log(|x|)|^{\frac12}} \cic{1}_{\R\setminus \{0\}}(x), \qquad  f(x) = \frac{1}{x(\log x)^2  \log\log |\log x|} \cic{1}_{(0,\e^{-\e^{\e}})}(x),    
\]
we have $w\in A_1$,   $f \in L\mathrm{log}_1 L\mathrm{log}_3L (\mathbb T; w)\subset \mathrm{QA}_\infty(w)$, $f \not \in \mathrm{QA}_\infty $.
 \end{remark}

\subsection{Methods, organization and and further results}
The proof of Theorem \ref{t:A} is, in es\-sence, a version of the Lacey-Thiele argument from \cite{LT} for functions outside local $L^2$ that avoids interpolation and the consequent loss of constants.  In Section \ref{s:tiles}, matters are reduced to estimating bilinear forms involving  wave packet coefficients \eqref{e:wpt} associated to a \emph{tile} $P$, namely a Heisenberg uncertainty box in the space-frequency plane, and their modified version \eqref{e:wptA}. The wave packet coefficient \eqref{e:wpt}  is roughly the $L^\infty$ norm of the projection of $f$ to a $O(1)$-dimensional subspace of   functions space-frequency   adapted to $P$. Using the outer $L^p$ framework of Do-Thiele \cite{DT2015}, described in Section \ref{s:outer}, the main steps of the proof become two quantified and localized outer Carleson embedding theorems for the wave packet maps \eqref{e:wpt} and \eqref{e:wptA}. The latter is essentially a localized reformulation of the mass parameter bounds of \cite{LT} and occupies Section \ref{s:modifemb}. The former, Theorem \ref{mainemb},   is substantially new, and is stated  and proved in Section \ref{s:lwpe}. Section \ref{s:pcarl} then contains the short and completely elementary stopping forms argument leading to Theorem \ref{t:A}.

The main novel technical tool behind the proof of Theorem \ref{mainemb} is a smooth space-frequency decomposition of a function $f$ locally in $L^p$, $1<p<2$ induced by a  \emph{forest}, namely a collection of tiles organized into space-frequency \emph{trees}. The decomposition is constructed by expanding $f$ in Gabor series spatially localized on Calder\'on-Zygmund intervals associated to the forest, and selecting a principal part  \eqref{e:pr1} which is locally in $L^2$, albeit with   local norms depending on the counting function of the forest. Multi-frequency decomposition  lemmas of different flavor have been used extensively in the past  literature on modulation invariant singular integrals \cite{DPOu,DPThiele,OT}. The construction used in all these references generates a \emph{good part} via projection on the linear span of $N$ pure frequencies on a spatial interval, initially due to Nazarov, Oberlin and Thiele \cite{NOT2010}, and based on a sleek Hilbert space lemma of Borwein-Erdelyi \cite{BE}. The corresponding remainder term does have vanishing moments with respect to the relevant frequencies, but   its local norms are of the same order of those of the good part, and thus also depend on the counting function. This loss may only be offset by paying an additional price on the 
good part. On the contrary,  the smooth remainder  \eqref{e:pr12} from our decomposition inherits the  much smaller local norms of $f$ and its contribution to \eqref{e:wpt} may be estimated as a pure error term, by careful exploitation of   frequency decay and separation in frequency localization. We expect that our smooth decomposition will find extensive use in further problems involving modulation invariant estimates outside local $L^2$, such as, for instance, uniform estimates for the bilinear Hilbert transform, see \cite{MTT2,OT,TU} for context.

The wave packet coefficients \eqref{e:wpt} also appear in the model sums of the  multiplier operators with singularity along  subspaces of rank one, whose archetypal example is the bilinear Hilbert transform. The first $L^p$-bounds for the  latter operator are due to  Lacey and Thiele \cite{LTbht1,LTbht2}, while    Muscalu, Tao and Thiele address more general multipliers and higher ranks \cite{MTT1}. A systematic qualitative weighted theory for rank one multiplier operators was first obtained by Culiuc, Ou and one of us in \cite{CDPOUBHT}, as a corollary of a family of $\vec p$-sparse bounds. Subsequently, several works have deduced from the sparse bounds of \cite{CDPOUBHT} further qualitative weighted and vector-valued norm inequalities  by developing suitable multilinear  extrapolation theorems, see e.g.\ \cite{CUM2018,LMM+,LMO,NieZ2021}. 
On the other hand, it has proved difficult to deduce \emph{quantitative} weighted estimates, i.e.\ with specified, possibly sharp dependence, from the main result of \cite{CDPOUBHT}, mainly because the constant in the $\vec p$-sparse bounds blows up in an unspecified way when the vector $\vec p$ approaches the extremal points of the range. The wave packet embedding of Theorem \ref{mainemb} may be used to quantify the blow up rate much more precisely, leading to the following improvement of \cite[Theorem 1.3] {CDPOUBHT}.
\begin{theorem}
\label{multipliersparse} Let $\Gamma=\{\xi=(\xi_1,\xi_2,\xi_3)\in\R^3: \xi_1+\xi_2+\xi_3 =0\}$ and $\Gamma'=\mathrm{span} \,\gamma$ be a non-degenerate rank 1  subspace of $ \Gamma$, in the sense that  $\gamma=(\gamma_1,\gamma_2,\gamma_3)$ is a unit vector with $\gamma_j\neq 0$ for all $j=1,2,3$. Let
$
m\in L^\infty(\R^3) \cap \mathcal C^{\infty}(\R^3\setminus \Gamma')
$
be a symbol satisfying the estimates
\begin{equation}
\label{e:distbht}
\sup_{\xi \in  \R^3\setminus \Gamma'}
\left[\mathrm{dist}(\xi, \Gamma')\right]^{|\alpha|}\left|\nabla^{\alpha} m(\xi) \right| \leq 1
\end{equation} for all multi-indices $\alpha$ up to some finite order. Then the form\footnote{The  action of  $\Lambda_m$ on tuples of functions $f_j$ which are merely assumed to belong to $L^\infty_0(\R)$ may be defined by smooth truncation of the integral \eqref{e:BHTmod} near $\Gamma $ and at infinity. The obtained bounds are uniform with respect to the truncation parameter, thus allowing for the limiting argument. This is classical, and we omit details.}
\begin{equation}
\label{e:BHTmod}
\Lambda_m(f_1,f_2,f_3) =\int_{\Gamma} m(\xi) \prod_{j=1}^3 \widehat{f}_j(\xi_j) \, \d \xi
\end{equation}
satisfies the family of $\vec p$-sparse bounds
\begin{equation}
\label{e:BHTrange}
\left\| \Lambda_m \right\|_{\vec p} \lesssim \frac{1}{\eps(\vec p)}, \qquad 1\leq p_1, p_2, p_3 < \infty, \qquad \eps(\vec p) \coloneqq  2 - \sum_{j=1}^2 \frac{1}{\min\{p_j,2\}}>0.
\end{equation}
\end{theorem}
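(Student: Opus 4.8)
The plan is to replay the architecture of Sections~\ref{s:tiles}--\ref{s:pcarl}, replacing the Carleson model sum by the model sum of the rank-one multiplier. First I would carry out the by now classical discretization of $\Lambda_m$ due to Muscalu--Tao--Thiele \cite{MTT1} (see also \cite{LTbht1,LTbht2}): a Whitney decomposition of $\R^3\setminus\Gamma'$ combined with the symbol estimates \eqref{e:distbht} expresses $\Lambda_m$, up to rapidly decaying errors and the usual averaging over the modulation and dilation symmetries, as a superposition of model sums
\[
\sum_{P}\frac{1}{|I_P|}\prod_{j=1}^{3}\langle f_j,\phi_{P_j}\rangle,
\]
indexed by rank-one collections of tri-tiles $P=(P_1,P_2,P_3)$; here the non-degeneracy hypothesis $\gamma_j\neq0$ is precisely what makes the three tile families frequency-separated, so that the space-frequency tree structure on which the whole argument rests is available. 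Expanding the wave packet on each $P_j$ over the $O(1)$-dimensional adapted subspace and absorbing the Schwartz envelope, matters reduce, exactly as in Section~\ref{s:tiles}, to trilinear forms over trees in which $f_1$ and $f_2$ enter through the wave packet coefficients \eqref{e:wpt} and their modified version \eqref{e:wptA}, while $f_3$ enters through the ``output'' tile $P_3$. By the convention fixed in the introduction one has $n=3$ and the third slot is the distinguished one, which is why $\eps(\vec p)$ will involve only $p_1$ and $p_2$.

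Next I would feed these tree sums into the outer $L^p$ calculus of Do--Thiele \cite{DT2015} recalled in Section~\ref{s:outer}. Splitting trees into lacunary and overlapping types according to which of the three tree-top frequencies is the non-lacunary one, and applying the outer H\"older inequality, one dominates the full form by a product of outer $L^{r_j}$ quasinorms of the wave packet maps of $f_1,f_2,f_3$ for a suitable H\"older tuple $\vec r$ read off from $\vec p$. For $f_1$ and $f_2$ at exponents below $2$ the decisive input is the new near-$L^1$ Carleson embedding, Theorem~\ref{mainemb}, together with the modified (mass parameter) embedding of Section~\ref{s:modifemb}; localized to a stopping cube $Q$, these bound the relevant sizes and energies by $\langle f_1\rangle_{p_1,Q}$, resp.\ $\langle f_2\rangle_{p_2,Q}$, times controlled powers of the forest counting function, and---this is the crucial point---with a loss that degenerates only as $\eps(\vec p)\to0^+$. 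When $p_j\geq2$ one reverts to the classical $L^2$ wave packet embedding, whose constant is bounded, which is the source of the truncation $\min\{p_j,2\}$; and when $p_j=1$ one uses the trivial $L^1$ size bound for the corresponding wave packet map, harmless because in that regime $\eps(\vec p)$ stays bounded away from $0$ unless one is simultaneously near the excluded point $(1,1)$. For $f_3$ the classical $L^2$-type size and energy bounds are used; these have uniform constants throughout $1\leq p_3<\infty$, so no blow-up is incurred on the distinguished slot.

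Finally, as in the completely elementary stopping-forms computation of Section~\ref{s:pcarl}, I would iterate the forest selection: at each generation the counting function is bounded by construction, the embedding estimates of the previous paragraph produce a fixed geometric gain per generation whose ratio tends to $1$ at rate $\eps(\vec p)$, and summing over generations dominates $|\Lambda_m(f_1,f_2,f_3)|$ by $\|\mathrm{M}_{\vec p}(f_1,f_2,f_3)\|_1$ with constant $\lesssim\eps(\vec p)^{-1}$. Exactly as in the proof of Theorem~\ref{t:A}, no interpolation is used, so no constants are lost and the stated sharp blow-up rate survives.

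The step I expect to be the \emph{main obstacle} is the bookkeeping in the second paragraph: one must choose the H\"older tuple $\vec r$ and split the trees so that each of $f_1,f_2$ is charged precisely to the outer $L^p$ level at which Theorem~\ref{mainemb} yields a gain proportional to $\eps(\vec p)$, while $f_3$ and any slot with exponent $1$ or $\geq2$ is charged only to levels with uniformly bounded constants. Doing this uniformly across the entire range $\eps(\vec p)>0$, through all the borderline configurations, without any interpolation loss is the delicate part, and is exactly where the smooth multi-frequency decomposition behind Theorem~\ref{mainemb}---rather than the Hilbert-space approach of \cite{CDPOUBHT} and its predecessors---is indispensable.
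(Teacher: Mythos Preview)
There is a genuine conceptual gap. You invoke the modified wave packet transform \eqref{e:wptA} and the mass-parameter embedding of Section~\ref{s:modifemb} for $f_2$. Those objects are specific to the Carleson operator: the map $A[f]$ carries the linearizing function $N(\cdot)$ and the indicator $\cic{1}_{\omega_P^{\mathsf b}}(N(\cdot))$, and Proposition~\ref{p:modifemb} estimates it in the $\size_{\mathsf C}$-norm tailored to that structure. None of this appears in the bilinear Hilbert transform model. In the rank-one form \eqref{e:tileform} all three functions enter through the \emph{standard} wave packet transform $W[\cdot]$, so Section~\ref{s:modifemb} plays no role whatsoever in this proof.

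The paper's route is also structurally different from what you sketch. Rather than tackling the full tuple $\vec p$ directly, it first reduces by symmetry and a complex interpolation argument to the extremal case $p_1=\frac{1}{1-\eps}$, $p_2=p_3=2$. Then Theorem~\ref{mainemb} is applied only to $f_1$, while $f_2$ and $f_3$ are handled with the local-$L^2$ embedding \eqref{e:2emb}. The $\frac{1}{\eps}$ blow-up comes from a single application of Proposition~\ref{p:eHolder}, which as stated admits the $X$-norm on only one factor; your plan to put both $f_1$ and $f_2$ through the near-$L^1$ embedding would require a two-factor extension of that proposition that is not in the paper. Finally, the substitute for \eqref{e:carlholder} that makes the H\"older step work is the tree estimate \eqref{e:holderbht}: the rank-one properties r3--r4 guarantee that for each tree $T(Q)$, two of the three images $\eta_k(T(Q,j))$ are lacunary $\kappa$-trees, which is what allows $\size_1$ of the product to be controlled by a product of three $\size_{2,\star,k}$-sizes. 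This mechanism is the core of the argument and is absent from your outline.
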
 The proof of Theorem \ref{multipliersparse} is given in Section \ref{s:pfbht}.  Note that the adjoint forms to the (non-degenerate) bilinear Hilbert transforms with parameter $\beta=\gamma \times (1,1,1)$ correspond to the choices
$m(\xi) = \cic{1}_{(0,\infty)}(\beta \cdot \xi) \psi((1,1,1)\cdot\xi ) $, where $\psi$ is any Schwartz function on $\R$ with $\psi(0)=1$.
We do not detail the consequences in terms of weighted bounds for   $\Lambda_m$,  which may be reconstructed by the interested reader via the extrapolation theorems of  \cite{LMM+,LMO,NieZ2021}. Tracking the constants in those works will   lead to quantitative weighted estimates. This point is transversal to the present article and  will   be expounded elsewhere. In a different direction, Theorem \ref{multipliersparse} yields precise information on the behavior of  $\Lambda_m$ near the extremal \cite[Subsect.\ 2.2]{LaceyMax} pair $L^1(\R)\times L^2(\R)$, fully recovering all results obtained in \cite{DPThiele} and leading to several improvements. One of these is detailed in the following corollary, improving in particular \cite[Theorem 3]{DPThiele}.
\begin{corollary} Let $T_m$ be an  adjoint to $\Lambda_m$ from \eqref{e:BHTmod}. Then 
\[
\begin{split} &
\left\|T_m :L^{\frac{1}{1-\eps}}(\R)\times L^{2}(\R)\to L^{\frac{2}{3-2\eps},\infty}(\R)\right\| \lesssim \frac{1}{\eps}, \qquad 0<\eps<2^{-6}, \\
&\left\|T_m :L^{1,\frac23}\log L^{\frac23}(\R)\times L^{2}(\R)\to \frac{L^{\frac{2}{3},\infty}}{\log L} (\R)  \right\| \lesssim 1.
\end{split}
\]
\end{corollary}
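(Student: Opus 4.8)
The plan is to derive the two endpoint estimates from the $\vec p$-sparse bounds of Theorem \ref{multipliersparse} by specializing the exponents and then invoking the standard machinery that converts sparse bounds into weak-type and restricted-type estimates for multilinear forms; this is the argument already referenced in the paper via \cite{CoCuDPOu} and \cite{DPThiele}. Concretely, let $T_m$ be the adjoint of $\Lambda_m$ mapping the first two inputs (in $L^{p_1}\times L^{p_2}$) to the third slot, so that $\|T_m\|_{(p_1,p_2,p_3)}$ is exactly the sparse bound in \eqref{e:BHTrange}. A $\vec p$-sparse bound $\|T_m\|_{(p_1,p_2,p_3)}\lesssim \eps(\vec p)^{-1}$ implies, by duality and the elementary fact that $\|\mathrm{M}_{\vec p}(f_1,f_2,f_3)\|_1 \lesssim \prod_j \|f_j\|_{L^{p_j,1}}$ after testing against $f_3 = \cic{1}_E$ (and using the basic bound for the sparse maximal operator on Lorentz-normalized data), a restricted-type inequality for $T_m$ acting on $L^{p_1,1}\times L^{p_2,1}$ into $L^{r,\infty}$ with $\frac1r = \frac{1}{p_1}+\frac{1}{p_2}$; these are the conversions codified in \cite[Appendix]{CoCuDPOu}. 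The only real work is to pick the right one-parameter family of exponents approaching $L^1 \times L^2$ and to control how the quantity $\eps(\vec p)^{-1}$ degenerates.

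For the first displayed estimate, I would fix $p_2 = 2$ and let $p_1 = \frac{1}{1-\eps}$, so that $\frac{1}{\min\{p_1,2\}} = \frac{1}{p_1} = 1-\eps$ (for $\eps$ small, $p_1 < 2$) and $\frac{1}{\min\{p_2,2\}} = \frac12$, whence $\eps(\vec p) = 2 - (1-\eps) - \frac12 = \frac12 + \eps$. That is bounded below, so naively $\|T_m\|_{\vec p}\lesssim 1$, which would give a stronger statement than claimed — so the loss of $\eps^{-1}$ must come not from the sparse \emph{constant} but from the passage from the sparse/restricted-type bound on $L^{p_1,1}\times L^{p_2,1}$ to a genuine bound on $L^{p_1}\times L^2$. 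Indeed $L^{p_1} = L^{p_1,p_1} \hookrightarrow L^{p_1,1}$ fails; the correct route is to interpolate, or rather to sum the restricted-type bounds over a family of $p_1$'s using the Marcinkiewicz-type / real-interpolation argument of \cite[Section 4]{DPThiele}. Fixing the target exponent $\frac{2}{3-2\eps}$ (so that $\frac{3-2\eps}{2} = \frac{1}{p_1}+\frac12 = \frac{3}{2}-\eps$, consistent), one obtains a family of restricted weak-type estimates indexed by a parameter ranging over an interval of length $\sim \eps$, and real interpolation between the two endpoints of that interval produces the strong $L^{p_1}$ bound with a constant $\sim 1/(\text{length of interval}) \sim \eps^{-1}$. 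This is exactly the mechanism in \cite[proof of Theorem 3]{DPThiele}, now fed the quantitatively sharp input \eqref{e:BHTrange} in place of the qualitative sparse bound of \cite{CDPOUBHT}.

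For the second displayed estimate, the source space $L^{1,\frac23}\log L^{\frac23}(\R)$ and the target $\frac{L^{\frac23,\infty}}{\log L}(\R)$ are Lorentz–Zygmund spaces tuned precisely so that no interpolation loss occurs: the idea is to run the same summation over the one-parameter family as above but to keep track of the logarithmic gain available at the true endpoint. One expands $f_1 \in L^{1,\frac23}\log L^{\frac23}$ into its layer-cake/level-set pieces $f_1 = \sum_k f_1^{(k)}$ with $\|f_1^{(k)}\|_{L^1}$-type control weighted by powers of $k$, applies the restricted-type bound \eqref{e:BHTrange} with $\eps \sim 1/k$ (so $\eps(\vec p)^{-1}\sim k$ absorbed against the logarithmic weight coming from the definition of the source norm), and then reassembles using Kalton's log-convexity of $L^{r,\infty}$ — the same tool used in the proof of Corollary \ref{cor:weighted} to pass from \eqref{e:weighted1} to \eqref{e:qa} — to land in the logarithmically corrected weak space $\frac{L^{2/3,\infty}}{\log L}$ with an absolute constant. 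The bookkeeping (matching the exact exponents $\frac23$ on both the $L^{1,\cdot}$ and $\log L^{\cdot}$ scales and on the target) is routine once one has \eqref{e:BHTrange}; it is precisely the Antonov/Arias de Reyna-style argument carried out in the multilinear setting in \cite{DPThiele}.

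The main obstacle I anticipate is purely organizational rather than conceptual: one must carefully set up the real-interpolation / summation step so that the blow-up $\eps(\vec p)^{-1}$ in \eqref{e:BHTrange} is the \emph{only} source of $\eps$-dependence, ruling out hidden logarithmic losses in the conversion from sparse domination to Lorentz-space bounds (these would spoil the clean $\lesssim \eps^{-1}$ and $\lesssim 1$ conclusions). This requires using the sharp form of the sparse-to-restricted-weak-type passage — with constants independent of $\vec p$ as long as the exponents stay in a fixed compact region away from the degeneracies other than the one being approached — and then performing the interpolation/reassembly with explicit control of the number of pieces. Everything else is a direct transcription of the scheme of \cite{DPThiele}, so I would structure the proof as: (i) record the sharp sparse-to-restricted-type lemma; (ii) specialize \eqref{e:BHTrange} along the two exponent families; (iii) interpolate (first estimate) resp. sum the layer-cake pieces with Kalton log-convexity (second estimate).
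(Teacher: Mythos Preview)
Your handling of the first estimate contains a real gap. The sum defining $\eps(\vec p)$ in \eqref{e:BHTrange} should run over all three indices $j=1,2,3$; the two-index version you copied is a typo, as is clear from Section~\ref{s:pfbht}, which explicitly records $\eps(\vec p)=\eps$ at the extremal tuple $(p_1,p_2,p_3)=\big(\frac{1}{1-\eps},2,2\big)$. In particular you never specify $p_3$ in your sparse bound, and with the relevant choice $p_3=2$ the sparse constant is already $\lesssim\eps^{-1}$: that is where the factor $\eps^{-1}$ in the weak-type norm originates. Your proposed mechanism---a uniformly bounded restricted weak-type family combined with a Marcinkiewicz loss over an interval of length $\sim\eps$---misattributes the blow-up, and once the formula is corrected there is no interval of length $\sim\eps$ to interpolate over in the first place. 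The paper itself does not spell out the first estimate beyond pointing to Theorem~\ref{multipliersparse}; the passage from the $\big(\frac{1}{1-\eps},2,2\big)$-sparse bound, or more directly from the localized single-tree estimate \eqref{e:wtih}, to the weak-type bound is the standard generalized restricted-type argument for model BHT sums, which carries the constant $\eps^{-1}$ through without further $\eps$-dependent loss.

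For the second estimate the paper does not argue from scratch: it deduces it from the first via \cite[Theorem~4]{DDPJFAA}, a general extrapolation principle converting a family of weak-type bounds with $\eps^{-1}$ blow-up into the stated Lorentz--Zygmund endpoint. Your layer-cake plus Kalton log-convexity outline is essentially what that reference does internally and would also succeed, but it is considerably more work than the one-line citation the paper uses.
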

For the definition of the spaces appearing in the second estimate and   its easy deduction from the first, see \cite[Theorem 4]{DDPJFAA}.
\begin{remark}[On the relationship between sparse and weak type] Corollaries \ref{cor:weighted} and \ref{cor:a2} demonstrate how sparse  bounds are both formally stronger    and  convey additional information than Lebesgue estimates.     The article \cite{DPLer2013},   based  on mean oscillation techniques, contains a partial converse of the sparse to weak type implication for maximal modulation singular integrals. The  weighted estimates of \cite{DPLer2013} have in fact been deduced relying on weak-$L^p$ type bounds which are strictly weaker than both  Theorem \ref{t:A} and the Carleson-Hunt bound \eqref{e:ch}, and which are in fact consequences of  \eqref{e:ch}  and extrapolation; see e.g.\ \cite[Estimate (1.7)]{DPLer2013}.   On the other hand, our embedding Theorem \ref{mainemb} yields   Theorem \ref{t:A} directly, and also applies in the context of Theorem \ref{multipliersparse},  which is out of reach for current mean oscillation techniques: see \cite{LOP} for a more recent, unified approach to sparse domination via weak type bounds.  \end{remark}
 
	\subsection*{Recurring notation} The treatment in this paper focuses on the case of functions defined on the real line; however, the generalization to higher dimensional Euclidean spaces is merely notational and all arguments are easily transcribed to that setting. 
	The Fourier transform on $\R$ obeys the normalization
\[
\mathcal F f (\xi)= \widehat f(\xi) =\frac{1}{\sqrt{2\pi}} \int_{\R} f(x) \e^
{-ix\xi} \, \d x, \qquad \xi \in \R. 
\]	
Throughout, the transformations
\[
\mathrm{Tr}_a f\coloneqq f(\cdot -a), \qquad \mathrm{Mod}_a f\coloneqq  \exp(ia\cdot)f(\cdot), \qquad \mathrm{Dil}_b^{p} f\coloneqq b^{-\frac1p}f(b^{-1} \cdot)
\]
for $a\in \R, b>0$, $0<p\leq \infty$,
are used to describe the invariance properties of our singular operators.
The symbol
	\[\langle x \rangle =\sqrt{1+|x|^2}, \qquad x\in \R\] indicates the usual Japanese bracket.	
	The center and length of an interval $I\subset\R$ are respectively denoted by $c_{I} $ and $\ell_I$. Accordingly, define the $L^\infty$-normalized polynomial decay factor adapted to $I$ by
	\[
	\chi^M_I \coloneqq  \mathrm{Tr}_{c_I} \mathrm{Dil}_{\ell_I}^\infty \langle\cdot \rangle^{-M}, \qquad M \in 2\mathbb N \setminus \{0\}.
	\]
	When we drop $M$ and simply write $\chi_I$ instead, the parameter $M$ is large and unimportant.
As customary, for $0<p\leq \infty$,  local $L^p$-(quasi)norms on $I$, their tailed analogues and the $p$-th Hardy-Littlewood maximal operator  follow the notation
\[
\langle f \rangle_{p,I} \coloneqq  |I|^{-\frac1p} \left\| f \cic{1}_I \right\|_{p}, \qquad \llangle  f \rrangle_{p,I} \coloneqq  |I|^{-\frac1p} \left\| f \chi_I^{\frac{2^9}{p}} \right\|_{p}, \qquad \mathrm{M}_p f\coloneqq \sup_{I\subset \R} \langle f \rangle_{p,I} \cic{1}_I.
\]	
with most times $\mathrm{M}_1=\mathrm{M}$ for simplicity. 
We   clarify our notation for the weighted Lorentz and Orlicz spaces appearing in the results of Corollary \ref{cor:weighted}. A \emph{weight} stands for a positive integrable function  $w$ on  $\mathbb T=(-\pi, \pi] $. There is no loss in generality with assuming that $\int_{\mathbb T} w= 1$. As customary, we overload the notation for the weight $w$ and the corresponding measure $\d w =w \d x$.  The weak and strong weighted Lebesgue quasinorms are then defined for $p\in (0,\infty)$ by
\[
\|f\|_{L^{p,\infty}(\mathbb T; w)} \coloneqq \sup_{t>0} t \left[w\left(\{x\in \mathbb T: |f(x)|>t\}\right)\right]^{\frac1p}, \qquad \|f\|_{L^{p}(\mathbb T; w)} \coloneqq \left(\int_{\mathbb  T} |f|^p \, \d w \right)^{\frac1p}.
\]
If $\Phi:[0,\infty]\to [0,\infty)$ is a fundamental function,  the weighted Orlicz norm $\Phi(L)(\mathbb T;w)$ is
\[
\|f\|_{\Phi(L)(\mathbb T;w)} \coloneqq \inf\left\{ t>0: \int_{\mathbb T} \Phi\left( \frac{|f|}{t}\right) \, \d w \leq 1\right\}.
\]
The  fundamental functions occurring are $\Phi(t)=t\mathrm{log}_1 t$ and $\Phi(t)= t\mathrm{log}_1 t\mathrm{log}_3 t$, with iterated logarithm  notation
\[
\mathrm{log}_1 t =\max\{1,\log t\}, \qquad \mathrm{log}_k t =\max\{1,\log(\mathrm{log}_{k-1}t)\}, \quad k\geq 2.
\]
The quasinorm  $\mathrm{QA}_q(w)$ appearing in \eqref{e:qa} is defined by
\begin{equation}
\label{e:defQA}
\|f\|_{\mathrm{QA}_q(w)} = \inf\left\{\sum_{j=1}^\infty  \mathrm{log}_1 j
 \left\|   f_j \right\|_{L^{1}(\mathbb T; w)} \mathrm{log}_1 \left (\textstyle\frac{\left\|f_j \right\|_{L^{q}(\mathbb T; w)} }{\left\|f_j \right\|_{L^{1}(\mathbb T; w)} }   \right): f=\sum_{j=1}^\infty f_j, \sum_{j =1}^{\infty}|f_j|<\infty \text{ a.e.} \right\}.
\end{equation}
 Finally, the symbol $C$ and the  constant implied by the almost inequality sign  $\lesssim$ are meant to be absolute,  unless otherwise specified via the notation $C_{a_1,\ldots, a_n}, \lesssim_{a_1,\ldots, a_n}$. The latter notation highlights dependence on the parameters ${a_1,\ldots, a_n}$.

\subsection*{Acknowledgments} Part of this work has been carried out while the authors were in residence at the program ``Interactions between Geometric measure theory, Singular integrals, and PDE'' held at the Hausdorff Institute for Mathematics, Bonn during Spring 2022. The authors gratefully acknowledge the organizers of the program and the academic and supporting staff of the Institute for the delightful hospitality.

The authors benefited from, and are thankful for, inspiring conversations with Marco Fraccaroli and Christoph Thiele on the $X^{p,q}_a$ outer spaces of Section \ref{s:outer}, and with Kangwei Li on  the topic of multilinear weighted extrapolation. The authors also extend their gratitude to Andrei Lerner for fruitful discussion on sparse domination principles, and  to Maria J.\ Carro for bringing the interesting reference \cite{CDS} to their attention.
\section{Space-frequency analysis of modulation invariant operators} 
\label{s:tiles} After a few preliminaries, this section introduces the wave packet transform \eqref{e:wpt}
on the space-frequency tiles,   its modified version \eqref{e:wptA}, and their role in the discretization of maximally modulated singular integrals.
\subsection{Dyadic grids and tiles} We say that a collection $\mathcal D$ of intervals of $\R$ is  a \emph{dyadic grid} if
\begin{itemize}
\item[a.] $\{\ell_I: I \in \mathcal D\}\subset \rho2^{\mathbb Z}$ for some $\frac12< \rho=\rho_{\mathcal D} <2$.
\item[b.] for all $k\in \mathbb Z$ there holds
$
\mathbb R =\bigcup\{I \in \mathcal D, \ell_I=\rho2^k\}
$
up to possibly a set of zero measure (covering property);
\item[c.] $I,J \in \mathcal D \implies I\cap J\in\{\varnothing, I, J\}$ (grid property).
\end{itemize} The elements of a dyadic grid are referred to as \emph{dyadic intervals}. A typical example that we will use at times are the three shifted dyadic grids
\[
\D_{g}\coloneqq\left\{2^k\left({\textstyle\ell+\frac{g(-1)^k}{3}}+[0,1)\right): k, \ell \in \mathbb Z\right\}, \qquad g=0,1,2.
\]  
\begin{remark}[Parent, sibling, and children intervals] Let $  I \in \mathcal D $ be a dyadic interval and $\kappa\geq 1$. Properties a.\  to  c.\ yield the existence of a unique interval $I^{\mathsf{p}(\kappa)}\in \mathcal D$ with $\ell_{I^{\mathsf{p}(k)}}=2^\kappa \ell_{I}$ and $I\subset I^{\mathsf{p}(\kappa)}$. We call $I^{\mathsf{p}(\kappa)}$ the $\kappa$-th \emph{parent} of $I$.
Conversely, if $I\in \mathcal D$, we enumerate by \[
I^{\mathsf{ch}(\kappa,j)}, \qquad j=1,\ldots, 2^\kappa\] the collection of the  $\kappa$-\emph{grandchildren} of $I$.  These are those $J\in  \mathcal D$ with $J^{\mathsf{p}(\kappa)}=I$, with the obvious convention that  $c(I^{\mathsf{ch}(\kappa,j_1)})<c (I^{\mathsf{ch}(\kappa,j_2)})$ if $1\leq j_1 <j_2\leq 2^\kappa$.
Finally, we denote by $I^{\mathsf{b}}$, the \emph{sibling} of $I$, the unique $J\in\mathcal D $ with $\ell_J=\ell_I$ and $I^{\mathsf{p}(1)}=I\cup J$. 
\end{remark} 
\begin{remark}[Shifted grids]
\label{r:grids2}
Let $M$ be a large integer, Standard shifted dyadic grid techniques, see e.g.\ \cite{LN}, yield the existence of dyadic grids $\mathcal G_{j}$, $j=1,\ldots, 2^{M+10}$ with the following property: for every (not necessarily dyadic) interval $Q\subset \mathbb R$ there exists $j$ and $I(Q)\in \mathcal G_j $ with $Q\subset I(Q)$ and $\ell_{I(Q)}\leq (1+2^{-M}) \ell_Q$. This property will be used a couple of times in what follows.
\end{remark}

We say that the grids $\mathcal D, \mathcal D'$ are \emph{dual} if $\rho_{\mathcal D}\rho_{\mathcal D'}=1$. Let now $\mathcal D \times \mathcal D'$ be a  fixed pair of dual dyadic grids  on $\R$. 
A \emph{tile} $P=I_P\times \omega_P\in  \mathcal D \times \mathcal D'$ is the cartesian product of  dyadic intervals  with reciprocal lengths, that is
$
\ell(I_P)\ell(\omega_P) = 1.
$
The intervals $I_P, \omega_P$ are  referred to respectively as the \emph{spatial support} and  \emph{frequency support} 
of the tile $P$. The set of \emph{all tiles} in $ \mathcal D \times \mathcal D'$ is denoted by $\mathbb S_{\mathcal D, \mathcal D'}$ or simply $\mathbb S$ if the dyadic grids are fixed and clear from context, and referred to as \emph{tiling} associated to $ \mathcal D \times \mathcal D'$ or simply \emph{tiling}.
It is convenient to adopt the notation $\scl(P)=\ell_{I_P}$  for the (spatial)   scale of $P$. 

\subsection{Wave packets and wave packet transforms.} The rationale for defining tiles as above is that they describe the space-frequency localization of the functions, referred to as \emph{wave packets}, involved in the analysis of modulation invariant operators.
Denote by  $\Theta^{M}$   the unit ball of the  Banach space  \[
\left\{\vartheta \in \mathcal C^M(\R): \|\vartheta\|_{\star,M}<\infty, \right\}, \qquad
\|\vartheta\|_{M} \coloneqq \sup_{0\leq \alpha \leq M }\sup_{x\in \R}\left| \langle x\rangle^M D^\alpha\vartheta(x)  \right| 
\] 
For a tile $P=I_P \times \omega_P$, define the corresponding  $L^1$-adapted, localized classes of order $M$ by

\begin{equation}
\label{e:refwps}
\Phi^{M}(P) \coloneqq\left\{ \varphi = \mathrm{Mod}_{ c(\omega_P)} \mathrm{Tr}_{c(I_P)}
\mathrm{Dil}_{\scl(P)}^1 \phi \textrm{ for some }\phi\in \Theta^{M}, \, \supp \widehat{\varphi}\subset \omega_{P}\right\}.
\end{equation} We stress that $\varphi \in \Phi^{M}(P)$ has compact frequency support in $\omega_P$.
From now on, we omit the $M$ from the superscript and our forthcoming definitions depend on $M$ implicitly. The \emph{order $M$ wave packet transform} of  $f\in L^{\infty}_{0}(\R)$ is the map
\begin{equation}
\label{e:wpt}
\begin{split}
 &	W[f]: \mathbb S\to [0,\infty), \qquad W[f](P) \coloneqq \sup_{\varphi\in \Phi^M (P) } \left|\langle f,\varphi\rangle\right|   \end{split}
\end{equation}
The dependence on $M$ is kept implicit in the notation. We can think of $W[f](P)$ as the  magnitude of the space-frequency localization of $f$ to the tile $P$.

  When dealing with maximally modulated singular integrals, a modified wave packet transform   models the contribution of the dualizing function. Namely, define
\begin{equation}
\label{e:wptA}
\begin{split}
 { A}[f](P)\coloneqq \sup_{\psi \in \Psi  ^{M}(P)} \left| \left\langle f, \psi(\cdot,N(\cdot))  {\cic{1}_{ \omega_{P}^{\mathsf{b}}}} (N(\cdot)) \right\rangle \right| 
\end{split}
\end{equation}
 where   $N:\R\to \R $ stands for a fixed measurable function,  and $\Psi ^{M}(P)$ is the modified class
\begin{equation}
\label{e:refwpsO}\Psi^{M}(P)\coloneqq\left\{ \phi=\phi(x,\nu)\in \mathcal C^M(\R\times \R) : \left[\frac{\partial_{\nu}}{\scl(P)} \right]^a \phi(\cdot,\nu) \in \Phi^M(P),  \,\forall \nu \in \R, \, a=0,1  \right\}. \end{equation} 
 The dependence on the function $N$ and on the smoothness-decay parameter $ M$   is kept implicit in the notation for \eqref{e:wpt}-\eqref{e:wptA}, as these  will be clear from context. For this reason, unless strictly necessary,     $M$ is dropped from the notations, writing  for example $\Phi (P), \Psi(P)$.

\subsection{Analysis of maximally modulated singular multipliers \label{maxmodmultimodel}} The wave packet transforms \eqref{e:wpt} and \eqref{e:wptA} enters directly the discrete models of both the Carleson operator and of   rank 1 multilinear multipliers such as the bilinear Hilbert transform. For the sake of motivation, here follows the reduction of the former family to the wave packet form \eqref{e:modelsum} below.
  
Let $m\in L^\infty(\R) \cap \mathcal C^\infty(\R\setminus\{ 0\}) $ be a smooth H\"ormander-Mihlin multiplier, that is
\begin{equation}
\label{e:HM0}
\sup_{0\leq \alpha \leq M} \sup_{\xi \neq 0} |\xi|^\alpha \left|m^{(\alpha)} (\xi) \right| \leq 1
\end{equation}
for some large and unimportant  $M$.
In the next paragraph, we prove the pointwise estimate
\begin{equation}
\label{e:modelred}
\mathcal C f(x) \leq {\sum_{u=1}^{95}}
\sum_{\star\in \{+,-\}} \sum_{g=\{0,1,2\}}  
 \sup_{N\in \R}\left| \sum_{P\in \mathbb S_{g}} |I_P|\langle f, \phi_P\rangle \psi^\star_{P{,u }}(x,N) \right|, \qquad x\in \mathbb R
\end{equation}
where 
\[
\mathcal C f(x)=
\sup_{N \in \R} \left| H_N f \right| , \qquad H_N f(x)\coloneqq \int_\R m(\xi-N)\widehat{f}(\xi)  \e^{i x \xi} \, \d \xi, \quad x \in \R.
\]
is the maximally modulated multiplier operator already introduced in \eqref{carleson}, 
   $\mathbb S_{ g}\coloneqq\mathbb S_{\mathcal D_0, \mathcal D_g}$ is the set of all tiles associated to the grids $\mathcal D_0, \mathcal D_g$,
  the functions  $\phi_P,\psi_{P{,u }}^\star$ are uniform multiples of adapted wave packets from respectively 
$\Phi(P), \Psi(P)$, cf.\ \eqref{e:refwps}-\eqref{e:refwpsO}, 
and
\begin{equation}
\label{e:supppsi1}
\mathrm{supp}_2\psi^{\pm}_{P{,u }} \coloneqq \overline{\left\{N\in \R: \psi^{\pm}_{P{,u }}(\cdot,N)\neq 0\right\}} \subset Q_P^{\pm,u}\coloneqq c(\omega_P)\mp\ell_{\omega_P} \left[\textstyle  7+\frac{u}{4}  , 9+\frac{u}{4}  \right] \end{equation} 
Fix the parameters $g,u$ and $\star=+\in\{+,-\}$ . We claim that there exist dyadic grids $  \mathcal{G}_j, \; j=1,\ldots ,2^{18} $ with the property that for all $P\in \mathbb{S}_g $ there exists $j_P\in \{1,\ldots ,2^{18}\}$ and $J_P \in  \mathcal{G}_{j_P}$ with 
\[\mathrm{supp}_2\psi^{+}_{P{,u }} \subset
J_P^{\ch(1,1)}, \qquad \omega_P\subset J_P^{\ch(1,2)}.
\]
This is easily obtained by applying Remark \ref{r:grids2} with $ M=8 $ to the convex hull of  $Q_P^{+,u}$ and $\omega_P$, whose  leftmost fourth contains $Q_P^{+,u}$ and is contained within the left half of the smoothing interval, and whose   rightmost fourth contains $\omega_P$, and is contained within the right half of the smoothing interval. We then define the tile $ \tilde{P}=\tilde{P}(P)=I_{\tilde{P}} \times \omega_{\tilde P} \in \mathbb{S}_{\mathcal{H}_j \times \mathcal{G}_j} $ by \[I_{\tilde P} \coloneqq \text{ the unique } J \in \mathcal{H}_j \text{ with } c(I_P)\in J, \ell_J \ell_{J_P^{\ch(1,2)}}=1, \qquad \omega_{\tilde{P}} \coloneqq J_P^{\ch(1,2)}\]
where $\mathcal H_j $ is a fixed dual grid to $\mathcal G_j $.  With this definition,
\begin{equation}
	\label{e:supppsi2} \mathrm{supp}_2\psi^{+}_{P{,u }}\in \omega_{\tilde P(P)}^{\mathsf{b}}.
\end{equation}
For $\tilde P\in  \mathbb S_{\mathcal H_j \times \mathcal G_j }$, let $\mathbb S_g(\tilde P)\coloneqq\{P\in \mathbb S_g: \tilde P(P)= \tilde P\}$. For each $N\in \mathbb R$, we then have
\[
\sum_{  P \in \mathbb S_g(\tilde P)} |I_P|\langle f, \phi_P\rangle \psi^+_{P,u}(x,N) =  \sum_{  P \in  \mathbb S_g(\tilde P)} |I_P|\langle f, \phi_P\rangle \psi^+_{P,u}(x,N) \cic 1_{\omega_{\tilde P}^{\mathsf{b}}} (N)
\]
having used \eqref{e:supppsi2} in the first equality.
By construction, it is then easily verified that 
\[
\#\mathbb S_g(\tilde P) \lesssim 1 , \qquad
\phi_P \in C\Phi(\tilde P), \;   \psi^+_{P,u} \in C\Psi(\tilde P) \quad \forall P \in \mathbb S_g(\tilde P) \] with uniform constants over $\tilde P \in \mathbb S_{\mathcal H_j \times \mathcal G_j }.$
Linearization of the suprema in \eqref{e:modelred}, a passage to the adjoint followed by using the  definitions of \eqref{e:wpt}, \eqref{e:wptA},  and a limiting argument thus  allow us to reduce estimation of the operator \eqref{carleson} to proving uniform bounds for the forms
\begin{equation}
\label{e:modelsum} \mathsf C_{\mathbb P}(f_1,f_2)\coloneqq \sum_{P\in \mathbb P}  {|I_P|} W[f_1](P)A[f_2](P) 
\end{equation} 
where $\mathbb P$ is a finite subset of   $\mathbb S=\mathbb{S}_{\mathcal D,\mathcal D'}$ for a fixed pair of dual grids $\mathcal D,\mathcal D'$, and the function $N(\cdot)$ in the definition \eqref{e:wptA} of $A[f_2](\cdot)$ is a fixed but arbitrary measurable function.

\begin{proof}
[Proof of estimate \eqref{e:modelred}] By splitting and symmetry, we may assume that $m$ is supported on the positive half-line, and obtain the $\star=+$ term, whose superscript is omitted throughout. Let $  \psi_u \in \mathcal S(\R) $, $u=1,\ldots, 95$ with  \[\supp \widehat{\psi_u} \subseteq {\left[\textstyle\frac{1}{2}+\frac{u-1}{64},\frac{1}{2}+\frac{u+1}{64}\right]},\qquad \sum_{u=1}^{95} \sum_{k \in \Z} \widehat{\psi_{u}}(2^k \xi)=\cic{1}_{\left(0,\infty\right)}(\xi) \] and perform the corresponding Littlewood-Paley decomposition of the multiplier   $H_0 f$  as
 \[H_0 f
 ={\sum_{u=1}^{95}}\sum_{k\in \mathbb Z} f*\Psi_{k,u} , \qquad \Psi_{k,u} (x)\coloneqq \int_{\R} m(\xi) \widehat{\psi_{{u}}}(2^k\xi) \e^{ix\xi}\, \d \xi, \quad x\in \R. 
 \] 
 Further, let $\mathcal D_g$, $g=0,1,2$ be the three $1/3$-shifted dyadic grids on $\mathbb R$, and $
 \mathbb S_g(k)=\{P\in \mathbb S_g: \scl(P)= 2^{k} \}
$
 be the corresponding scale $k$ tiles.
Performing the standard Gabor decomposition, we pick $ \phi \in \mathcal S(\R) $ with $  \supp\widehat{\phi} \subseteq \left[0,\frac{2}{3}\right]$ such that \[ \sum_{\lambda \in \Z} \left|\widehat{\phi}\left(\xi-\textstyle\frac{\lambda}{3}\right)\right|^2=1, \qquad \xi \in \R\]  so that  for each $k\in \mathbb Z$
 \[\begin{split}
 &	f= \sum_{g=0}^2 \sum_{P\in  \mathbb S_g(k) }|I_P| \l f , \phi_{P} \r \phi_{P} , \qquad  \phi_{P}\coloneqq \mathrm{Mod}_{c_{\omega_P}}  \mathrm{Tr}_{c_{I_P}}   \mathrm{Dil}^1_{{\scl(P)}} \phi  \end{split}\]  holds. Note that $\phi_P\in C_{a} \Phi^{a}(P) $ for all $a$. 
 Combining and using the frequency support property of $\psi_k$  to restrict the summation,  \begin{equation}
\label{e:redmod01}
\begin{split} & \quad 
 H_Nf= {\sum_{u=1}^{95}} \sum_{k \in \Z}  f* \mathrm{Mod}_N  \Psi_{k,u} ={\sum_{u=1}^{95}} \sum_{g=0}^2\sum_{k\in \mathbb Z} \sum_{P\in  \mathbb S_g(k+4) } |I_P| \l f , \phi_{P} \r \phi_{P} * \mathrm{Mod}_N  \Psi_{k,u} \\ &={\sum_{u=1}^{95}}\sum_{g=0}^2   \sum_{P\in  \mathbb S_g  }  
|I_P| \l f , \phi_{P} \r  \psi_{P,{u }}(\cdot, N)\cic{1}_{{\left [ 7+\frac{u}{4}, 9+\frac{u}{4} \right ]}}\left( \textstyle\frac{c(\omega_P)-N}{\ell_{\omega_P}}\right) 
 \end{split}
   \end{equation}
   having defined the functions
\begin{equation}
\label{e:psiP}
   \psi_{P{,u }}(x, N)\coloneqq\phi_{P} * \mathrm{Mod}_N  \Psi_{k{,u }} (x),  \qquad 2^{k+4} = \scl(P).
\end{equation}
   To obtain \eqref{e:redmod01}, we have used that $\supp \widehat{ \mathrm{Mod}_N  \Psi_{k{,u} }}\subset N+2^{-k}[\frac{1}{2}+\frac{u-1}{64}, \frac{1}{2}+\frac{u+1}{64}]$ and that when $P\in \mathbb S_g(k+4)  $ the frequency support of $\phi_P$ is an interval $\omega_P$ of length $2^{-k-4}$. Thus, in order for $\psi_{P,{u }}(\cdot,N)$ to be  nonzero, $N$ must belong to the interval $ Q_P^{+,u}$ as claimed in \eqref{e:supppsi1}.

    We are left with proving that $\psi_{P,u}\in C_a\Psi^a(P)$ for all $0\leq a\leq M-1$. To this aim, fix $P\in \mathbb S_g(k+4)$. We treat both cases $\alpha=0,1$ at the same time. First of all, using the H\"ormander-Mihlin condition \eqref{e:HM0}
\begin{equation}
\label{e:HM1}
\supp \widehat{\Psi_{k{,u }}} \subset [2^{-k-1},2^{-k+1}], \qquad \left| D^a\widehat{\Psi_{k{,u }}} (\xi)\right| \lesssim_{M} 2^{ka}\sim_M |\xi|^{-a} 
\end{equation} 
for all $0\leq a \leq M$, $k\in \mathbb Z$.  Let also $\beta$ be an auxiliary Schwartz function with the property that $1_{[-\frac{1}{2},\frac12]}\leq   \beta\leq 1_{[-1,1]}$  and define
\[
\Phi_{k,{u, }P}(x,N) \coloneqq \int_{\R} \left({-\textstyle \frac{D}{\scl(P)} }\right)^\alpha \widehat{\Psi_{k{,u }}} (\xi) \beta\left(  \frac{ \xi +N-c_{\omega_P} }
{\ell_{\omega_P}}\right) \, \e^{ix\xi} \, \frac{\d \xi}{\sqrt{2\pi}}, \qquad x\in \R.
\]
Using the Fourier transform and the definition, we check that 
\[
\left[\frac{\partial_{N}}{\scl(P)}\right]^\alpha  \psi_{P{,u }}(\cdot, N)  = \phi_P * \mathrm{Mod}_N \Phi_{k{,u },P}
\]
so our claim follows easily from the scale $\scl(P)\sim 2^{k}$ bump function estimates for the function $\Xi=\mathrm{Mod}_{N-c_{\omega_P}} \Phi_{k,u,P}$, whose Fourier transform is supported on $|\xi|\leq  \ell_{\omega_P}\sim 2^{-k} $ and satisfies
\[
\begin{split}
 \widehat{\Xi}(\xi)&= \left({-\textstyle \frac{D}{\scl(P)} }\right)^\alpha \widehat\Psi_{k{,u }} \big(\xi-(N-c_{\omega_P})\big) \beta\left(  \frac{ \xi }{\ell_{\omega_P}}\right), \\
\left|\widehat{\Xi}^{(a)}(\xi)\right|&\lesssim_a \sum_{b+c=a} 2^{-k\alpha} \left|\widehat\Psi_{k{,u }}^{(b)} \big(\xi-(N-c_{\omega_P})\big)\right| 2^{kc} \lesssim_a
2^{-k\alpha} \left| \xi-(N-c_{\omega_P})\right|^{-b} 2^{kc} \lesssim_a 1
\end{split}
\]
for $0\leq a\leq M-1$, having used that   $c_{\omega_P}-N\geq 3 \ell_{\omega_P} $, while $|\xi| \leq \ell_{\omega_P}$ on the support of $\beta(\cdot/ \ell_{\omega_P})$. This completes  the proof of \eqref{e:modelred}.
 \end{proof}
 \section{Outer $L^p$ estimates for the wave packet transforms} \label{s:outer}
Outer $L^p$ spaces, introduced in this context  by Do and Thiele \cite{DT2015}, provide the functional setting   for our estimates on the wave packet transforms. In this section, after particularizing the main definitions, we introduce two new outer $L^p$ norms  enjoying a weaker, but more precisely quantified form of the outer H\"older inequality.
In what follows, we refer to a fixed tiling $\mathbb S=\mathbb S_{\mathcal D,\mathcal D'}$.
\subsection{Trees} Let $\kappa $ be a nonnegative integer. We say that $T\subset \mathbb S$ is a $ \kappa $-tree if there exists an interval $I_T\in \mathcal D$ and a frequency $\xi_T\in \R$ such that
\[
I_P\subset I_T,\quad \xi_T\in  \omega_P^{\mathsf{p}(\kappa)} \qquad \forall P\in T.
\]
The pair $(I_T,\xi_T)$ is referred to as \emph{top data} of $T$. 
The notation 
 \[ \mathcal{I}(T)\coloneqq  \{  I \in D: I=I_P \text{ for some } P \in T\}, \qquad 	\Omega(T) \coloneqq \{\omega\in \mathcal D': \omega=\omega_P \textrm{ for some } P\in T\} \] is used for the spatial and frequency components of a ${\kappa}$-tree $T$. 

Let $1\leq j\leq2^\kappa$. We say that a $\kappa$-tree $T$ is of type $j$ if $\omega_P=[\omega_P ^{\mathsf{p}(\kappa)}]^{\mathsf{ch}(\kappa,j)}$, that is, equals  the 
 $j$-th $\kappa$-grandchild of its $\kappa$-parent, for all $P\in T$. Clearly any $\kappa$-tree $T$   splits as the disjoint union  $T =\bigsqcup_{j=1}^{2^\kappa} T_{|j}$,  with each $T_{|j}$ being  a $\kappa$-tree of type $j$ with the same top data.
 \begin{remark} \label{singlescalesingletime}
The structure of $\mathbb S$ and the above definition entails that 
the intervals $\{\omega^{\mathsf{p}(\kappa)}: \omega\in \Omega(T)\}$ are nested. Therefore, 
$
\#\{\omega \in \Omega (T): \ell_{\omega}= \rho \} \leq 2^{\kappa}
$  for all $\rho>0$. As a first consequence, 
\begin{equation}
\label{e:spectree} \#\{P\in T:I_P=I\}\leq 2^\kappa\qquad \forall I \in \mathcal I(T).
\end{equation}

 \end{remark}		 
In general, each tree $T$ contains both a Littlewood-Paley type and a maximal function type component. The next definition isolates the Littlewood-Paley part. Say that a $\kappa$-tree $T$ is \emph{lacunary} if
\begin{equation} 
\label{e:lacdef}
	\omega,\omega'\in\Omega(T), \omega\neq \omega'\implies \omega \cap \omega'=\varnothing.
 \end{equation}
and for every tree $T$,  split
\begin{equation}
\label{e:lacov}
T=T^{\mathsf{ov}} \cup T^{\mathsf{lac}}, \qquad T^{\mathsf{ov}}\coloneqq\{P\in T: \xi_T\in \omega_P\}, \qquad T^{\mathsf{lac}}\coloneqq T\setminus T^{\mathsf{ov}}.
\end{equation}
The next lemma tells us in particular that $T^{\mathsf{lac}}$ is a union of at most $\kappa2^{\kappa}$ lacunary trees, and that the residual part  $T^{\mathsf{ov}}$ has additional structure. 
\begin{lemma}[Structure of trees]
	\label{lacpartislac}  Let $ T$ be a $\kappa$-tree   with top data $ (I_T,\xi_T) $.
Then $T=\bigsqcup_{u=1}^{\kappa} T^{u}$, with each $T^{u}$ also a $\kappa$-tree with the same top data and such that, for all $j=1,\ldots, 2^\kappa$
\begin{itemize}
\item[\emph{(i)}] $ \big[T^u_{|j}\big]^{\mathsf{lac}}$  is a lacunary tree;
\item[\emph{(ii)}] whenever  $j' \neq j$,   the intervals $\big\{ \big[\omega_P ^{\mathsf{p}(\kappa)}\big]^{\mathsf{ch}(\kappa,j')}:P\in\big[T^u_{|j}\big]^{\mathsf{ov}}\big\}$ are pairwise disjoint.
\end{itemize}
\end{lemma}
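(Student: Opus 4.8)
The plan is to obtain the splitting by a single scale--bookkeeping device, after which the lacunarity in (i) and the disjointness in (ii) will both follow from the nestedness of the intervals $\{\omega_P^{\mathsf{p}(\kappa)}:P\in T\}$ recorded in Remark~\ref{singlescalesingletime}. One may assume $\kappa\ge 1$; for $\kappa=0$ one has $T=T^{\mathsf{ov}}$ and there is nothing to prove. Since all spatial scales of tiles in $\mathbb S$ lie in $\rho_{\mathcal D}2^{\mathbb Z}$, I would write $\scl(P)=\rho_{\mathcal D}2^{s(P)}$ with $s(P)\in\mathbb Z$, and for $u=1,\dots,\kappa$ set $T^u\coloneqq\{P\in T:s(P)\equiv u\bmod\kappa\}$. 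These $\kappa$ sets partition $T$, and each is a $\kappa$-tree with top data $(I_T,\xi_T)$ because the defining conditions $I_P\subset I_T$ and $\xi_T\in\omega_P^{\mathsf{p}(\kappa)}$ are imposed tile by tile. The single property of this choice that will be used is the \emph{scale gap}: if $P,P'\in T^u$ and $\ell_{\omega_P}<\ell_{\omega_{P'}}$, then $s(P)-s(P')$ is a positive multiple of $\kappa$, whence $\ell_{\omega_{P'}}/\ell_{\omega_P}=2^{s(P)-s(P')}\ge 2^{\kappa}$.

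The next step is one structural observation. Fix $u$ and $P,P'\in T^u$ with $\ell_{\omega_P}<\ell_{\omega_{P'}}$. The intervals $\omega_P^{\mathsf{p}(\kappa)},\omega_{P'}^{\mathsf{p}(\kappa)}\in\mathcal D'$ both contain $\xi_T$, hence are nested; by the scale gap $\ell_{\omega_{P'}^{\mathsf{p}(\kappa)}}/\ell_{\omega_P^{\mathsf{p}(\kappa)}}=\ell_{\omega_{P'}}/\ell_{\omega_P}\ge 2^{\kappa}$, so $\omega_P^{\mathsf{p}(\kappa)}$ sits inside $\omega_{P'}^{\mathsf{p}(\kappa)}$ at depth at least $\kappa$, hence is contained in some $\kappa$-grandchild of $\omega_{P'}^{\mathsf{p}(\kappa)}$; as $\omega_P^{\mathsf{p}(\kappa)}\ni\xi_T$, this grandchild is the unique one containing $\xi_T$, which I denote $G(P')$. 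Thus $\omega_P\subset\omega_P^{\mathsf{p}(\kappa)}\subset G(P')$.

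For (i), suppose in addition $P,P'\in[T^u_{|j}]^{\mathsf{lac}}$ are distinct. If $\ell_{\omega_P}=\ell_{\omega_{P'}}$ then $\omega_P^{\mathsf{p}(\kappa)}=\omega_{P'}^{\mathsf{p}(\kappa)}$ (same length, common point $\xi_T$), and the type-$j$ condition gives $\omega_P=[\omega_P^{\mathsf{p}(\kappa)}]^{\mathsf{ch}(\kappa,j)}=\omega_{P'}$, contrary to $\omega_P\ne\omega_{P'}$; so, after swapping if needed, $\ell_{\omega_P}<\ell_{\omega_{P'}}$. Since $P'$ is lacunary, $\xi_T\notin\omega_{P'}$, so $\omega_{P'}\ne G(P')$; being distinct $\kappa$-grandchildren of $\omega_{P'}^{\mathsf{p}(\kappa)}$, the intervals $\omega_{P'}$ and $G(P')$ are disjoint, and hence $\omega_P\cap\omega_{P'}\subset G(P')\cap\omega_{P'}=\varnothing$. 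So any two distinct frequency intervals occurring in $[T^u_{|j}]^{\mathsf{lac}}$ are disjoint, which is exactly~\eqref{e:lacdef}; thus $[T^u_{|j}]^{\mathsf{lac}}$ is a lacunary tree.

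For (ii), fix $j'\ne j$ and $P,P'\in[T^u_{|j}]^{\mathsf{ov}}$. If $\ell_{\omega_P}=\ell_{\omega_{P'}}$ then $\omega_P=\omega_{P'}$ (same length, both containing $\xi_T$), so the intervals $[\omega_P^{\mathsf{p}(\kappa)}]^{\mathsf{ch}(\kappa,j')}$ and $[\omega_{P'}^{\mathsf{p}(\kappa)}]^{\mathsf{ch}(\kappa,j')}$ coincide. Otherwise say $\ell_{\omega_P}<\ell_{\omega_{P'}}$; then $\xi_T\in\omega_{P'}$ and $\omega_{P'}$ is a $\kappa$-grandchild of $\omega_{P'}^{\mathsf{p}(\kappa)}$, so $\omega_{P'}=G(P')=[\omega_{P'}^{\mathsf{p}(\kappa)}]^{\mathsf{ch}(\kappa,j)}$, and combining with the structural observation $[\omega_P^{\mathsf{p}(\kappa)}]^{\mathsf{ch}(\kappa,j')}\subset\omega_P^{\mathsf{p}(\kappa)}\subset[\omega_{P'}^{\mathsf{p}(\kappa)}]^{\mathsf{ch}(\kappa,j)}$, which is disjoint from $[\omega_{P'}^{\mathsf{p}(\kappa)}]^{\mathsf{ch}(\kappa,j')}$ since $j'\ne j$. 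Hence the intervals in (ii) are pairwise disjoint, finishing the proof. The one point that genuinely requires care is the first paragraph --- recognizing that partitioning by $s(P)\bmod\kappa$ is exactly what forces the depth-$\kappa$ gap between the relevant $\kappa$-parents; once that is in hand, (i) and (ii) are immediate from the two elementary facts that dyadic intervals through a common point are nested and that distinct $\kappa$-grandchildren of a dyadic interval are disjoint.
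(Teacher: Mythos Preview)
Your proof is correct and uses exactly the same decomposition as the paper: the paper's entire proof is the one-line hint ``Immediately verified by setting $T^u\coloneqq\{P\in T:\scl(P)\in 2^{\kappa\Z+u}\}$ for $1\le u\le\kappa$,'' which is your partition by $s(P)\bmod\kappa$. You have simply written out in full the verification of (i) and (ii) that the paper leaves to the reader, via the scale-gap/nesting argument.
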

\begin{proof} 	Immediately verified by setting $ T^u  \coloneqq \left\{P \in T    : \scl(P) \in 2^{\kappa \Z+u} \right\} $ for $ 1\leq u \leq \kappa$ . \end{proof}

\subsection{Outer $L^p$  on the space of local  tiles} For $J\in \mathcal D$, let $\mathbb S^J$ be the collection of all tiles $P\in \mathbb S$ with $ {I_P \subset J}$. 
Below, the notation $\ell^p(\mathbb S^J)$ stands for the $\ell^p$ spaces on $ \mathbb S^J $endowed with the weighted counting measure
\[
A\mapsto \sum_{P\in A} |I_P|, \qquad A\subset \mathbb S^J.
\] 
The collection $\mathcal T^{J,\kappa}$ of  all $\kappa$-trees $T\subset \mathbb S^J$ concurs to the definition of the  outer measure space   $(\mathbb S^J,\mathcal T^{J,\kappa},{\mu^{J,\kappa}})$, with outer measure ${\mu^{J,\kappa}}$   defined  by
\begin{equation}
\label{e:outerLp}
{\mu^{J,\kappa}}:\mathcal P( \mathbb S^J)\to [0,\infty], \qquad
{\mu^{J,\kappa}}(A)\coloneqq \inf\left\{ \frac{1}{|J|} \sum_{T\in \mathcal T} |I_T|:  \mathcal T\subset \mathcal T^{J,\kappa},  A\subset \bigcup_{T\in \mathcal T}  T   \right\};
\end{equation} to wit,
the  infimum above  is  taken over all collections $ \mathcal T\subset \mathcal T^{J,\kappa}$ of $\kappa$-trees whose union covers $A$.   Below, for a quasi-subadditive size map $\mathsf s$ as defined in \cite[Def.\ 2.3]{DT2015},
\[
\big[F: \mathbb S^J \to \mathbb C \big]\mapsto \left\{\mathsf{s}(F,T): T\in {\mathcal T^{J,\kappa}}\right\}
\]
we  consider the outer $L^{p,r}$ space on $(\mathbb S^J,{\mathcal T^{J,\kappa}},{\mu^{J,\kappa}})$, \[
L^{p,r}(J,\kappa,\mathsf{s})= L^{p,r	}(\mathbb S^J, \mu^{J,\kappa}, \mathsf{s})\] as defined in \cite[Def.\ 3.2]{DT2015}, for exponents $1\leq p,q\leq \infty$. The definition therein may be summarized as follows. First of all, define the outer essential supremum
\[
\outsup_\s F\coloneqq \sup_{T\in {\mathcal T^{J,\kappa}}} \s(F, T)  \eqqcolon
\|F\|_{L^{\infty}(\s)}= \|F\|_{L^{\infty,\infty}(\s)} 
\]
Secondly, define the super level measure $\mu_{\s}[F] :[0,\infty)\to [0,\infty]$ and the corresponding nondecreasing rearrangement $F^{*,\s}:[0,\infty)\to [0,\infty]$ respectively by
\[\begin{split}&\mu_{\s}[F](\tau)
	\coloneqq  \inf\left\{ {\mu^{J,\kappa}}(A):\outsup_\s (F\cic{1}_{\mathbb S^J\setminus A})\leq \tau\right\},  \\
	&F^{*,\s}(t)
	\coloneqq  \inf\left\{  \tau\in[0,\infty): \mu_{\s}[F](\tau)\leq t\right\}.
\end{split}\]
We then set 
\begin{equation}
\label{e:outerLpsp}
\|F\|_{L^{p,r}(J,\kappa,\mathsf{s})} \coloneqq \left\|F^{*,\s} \right\|_{p,q} =  
\left\| t^{\frac1p}  F^{*,\s}(t) \right\|_{L^q\left([0,\infty), \frac{\d t}{t} \right) };
\end{equation}
recall that the right hand side is  the standard Lorentz $L^{p,r}$ quasinorm on $[0,\infty]$, see e.g. \cite[Sect.\ 1.4]{GrafBook1}.
As customary, when $q=p$ we omit $q$ from the subscripts and superscripts. 

The main  examples of sizes and associated   outer $L^{p,r}$ spaces that arise in our applications are the following.  For $1\leq p \leq \infty$, set
\[ \mathsf{size}_{p}(F, T)\coloneqq  \frac{\left\|F\cic{1}_T\right\|_{\ell^p(\mathbb S^J)}}{|I_T|^{\frac1p}}, \qquad T\in {\mathcal T^{J,\kappa}}.
\] 
For $p=2$, we  define the variant 
\begin{equation}
\label{e:sizestar}
\size_{2,\star}(F, T)\coloneqq \sup\left\{\mathsf{size}_{2}(F, U): U\in {\mathcal T^{J,\kappa}} \textrm{ lacunary},  U \subset T \right\}, \qquad T \in {{\mathcal T^{J,\kappa}}},
\end{equation}
which is also a   size.
The definition of   $\mathsf{size}_{p}(F, \cdot)$ and $\mathsf{size}_{2,\star}(F, \cdot)$ depends  on ${\kappa}$ via the domain $\mathcal T^{J,\kappa}$, though we do not keep this dependence explicit in the notation.

The modified wave packet transform    acting on the dual side of the  Carleson operator, in accordance with  the definition  \eqref{e:wptA} involving $\omega_P^{\mathsf{b}}$, will be estimated in outer $L^{p,r}$-spaces \eqref{e:outerLpsp} where the parameter $\kappa$ is naturally chosen to be 1. On the outer measure space   \eqref{e:outerLp} we thus define, with reference to\eqref{e:lacov} 
\begin{equation}
\label{e:sized}
 \size_{\mathsf{C}}(F,T)\coloneqq\size_{2}(F,T^{\mathsf{lac}})+\size_1(F,T^{\mathsf{ov}}), \qquad T \in {{\mathcal T^{J,1}}}.
\end{equation}

The next proposition is a generalization to the Lorentz scale of the outer H\"older inequality, which plays a pivotal r\^ole in the applications of outer spaces to modulation invariant singular integrals.
\begin{proposition} 
\label{p:holder}
Let $  m \in \N_{\geq 2} $ and  $\s,\s_1,\s_2,\cdots,\s_m$ be  sizes on $(\mathbb S^J,{\mu^{J,\kappa}},{{\mathcal T^{J,\kappa}}})$ with the property that for all function $ m $-tuples $F_1, \cdots ,F_m: \mathbb S^J\to \mathbb C$, 
	\begin{equation}\label{e:submulti}
	\s\left(\prod_{j=1}^m F_j,T\right) \leq  \prod_{j=1}^m  \s_j(F_j,T)   \qquad \forall T \in {{\mathcal T^{J,\kappa}}}.
	\end{equation}
Then for all tuples $0<p,p_1,\ldots, p_m,\, q,q_1,\ldots, q_m\leq \infty, \, \frac{1}{p}= \sum_{j=1}^m \frac{1}{p_j}, \,\frac{1}{q}= \sum_{j=1}^m \frac{1}{q_j}$ there holds
\[
	\left\| \prod_{j=1}^m F_j \right\|_{L^{p ,q}{(J,\kappa,\s_\ell)}} \leq m^{\frac1p}   \prod_{j=1}^{m}\left\| F_j \right\|_{L^{p_j,q_j}(J,\mathsf{s}_j)}.
\]

\end{proposition}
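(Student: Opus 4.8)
The plan is to reduce the Lorentz-scale outer Hölder inequality to the known $L^p$-scale statement of Do--Thiele \cite[Prop.~3.6]{DT2015} (or, alternatively, to reprove it directly at the level of rearrangements), exploiting that the super-level measures are intrinsically defined through the outer measure $\mu^{J,\kappa}$ and the outer essential suprema. The starting observation is that \eqref{e:submulti} transfers directly to the outer essential suprema: for any $A\subset \mathbb S^J$,
\[
\outsup_{\s}\Big( \prod_{j=1}^m F_j \cic{1}_{\mathbb S^J\setminus A}\Big) \leq \prod_{j=1}^m \outsup_{\s_j}\big(F_j \cic{1}_{\mathbb S^J\setminus A}\big),
\]
since on each tree $T$ the size hypothesis applies and $\cic{1}_{\mathbb S^J\setminus A}$ factors. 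From this I would derive the key pointwise inequality between nondecreasing rearrangements: for $t = \sum_{j=1}^m t_j$ with $t_j\geq 0$,
\[
\Big( \prod_{j=1}^m F_j \Big)^{*,\s}(t) \;\leq\; \prod_{j=1}^m F_j^{*,\s_j}(t_j).
\]
Indeed, choosing for each $j$ a set $A_j$ with $\mu^{J,\kappa}(A_j)\leq t_j$ and $\outsup_{\s_j}(F_j\cic{1}_{\mathbb S^J\setminus A_j})$ essentially equal to $F_j^{*,\s_j}(t_j)$, one takes $A=\bigcup_j A_j$, uses subadditivity of the outer measure $\mu^{J,\kappa}(A)\leq \sum_j t_j = t$, and applies the displayed product bound for the essential suprema.

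Granting this rearrangement inequality, the conclusion becomes a purely one-dimensional statement about the Lorentz quasinorms $\|\cdot\|_{p,q}$ on $([0,\infty),\d t/t)$. Writing $g_j(t) \coloneqq F_j^{*,\s_j}(t)$ (nondecreasing) and $g(t) \coloneqq (\prod_j F_j)^{*,\s}(t)$, we have $g(mt) \leq \prod_j g_j(t)$ by taking all $t_j = t$, hence by monotonicity $g(s) \leq \prod_j g_j(s/m)$ for all $s>0$. Then
\[
\|g\|_{p,q}
= \big\| s^{1/p} g(s)\big\|_{L^q(\d s/s)}
\leq \big\| s^{1/p} \textstyle\prod_j g_j(s/m) \big\|_{L^q(\d s/s)}
= m^{1/p}\big\| s^{1/p}\textstyle\prod_j g_j(s)\big\|_{L^q(\d s/s)},
\]
after the scaling substitution $s\mapsto ms$ which leaves $\d s/s$ invariant and produces the factor $m^{1/p}$. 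Finally I would apply Hölder's inequality for Lorentz functionals in the form $\big\|\prod_j h_j\big\|_{p,q} \leq \prod_j \|h_j\|_{p_j,q_j}$, valid for $\frac1p = \sum_j \frac1{p_j}$ and $\frac1q = \sum_j \frac1{q_j}$ with $0 < p, p_j, q, q_j \leq \infty$ (this is classical; see e.g.\ \cite{GrafBook1}), with $h_j(s) = g_j(s) = F_j^{*,\s_j}(s)$, to obtain $\big\| s^{1/p}\prod_j g_j(s)\big\|_{L^q(\d s/s)} \leq \prod_j \|F_j\|_{L^{p_j,q_j}(J,\s_j)}$. Combining the last two displays yields exactly the claimed bound with constant $m^{1/p}$.

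The only genuine subtlety—and the step I would treat with the most care—is the passage to rearrangements: one must ensure that the infima defining $\mu_\s[F]$ and $F^{*,\s}$ interact correctly with the countable subadditivity of $\mu^{J,\kappa}$ and that the "essentially attained" choices of exceptional sets $A_j$ can be made simultaneously, up to arbitrarily small $\eps>0$ slack in both the measure and the level. This is where the outer-measure structure, as opposed to a genuine measure, must be handled: $\mu^{J,\kappa}$ is only an outer measure, but subadditivity over finite (indeed countable) unions is exactly property built into \eqref{e:outerLp}, so $\mu^{J,\kappa}\big(\bigcup_j A_j\big)\leq \sum_j \mu^{J,\kappa}(A_j)$ holds and suffices. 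Everything else is either the definitional unwinding of \eqref{e:outerLpsp} or the classical scalar Lorentz-Hölder inequality, and I would simply cite the latter. The factor $m^{1/p}$ is an artifact of the $mt$ appearing in $g(mt)\leq\prod_j g_j(t)$, coming in turn from the $m$-fold subadditivity of the outer measure across the sets $A_j$; it cannot in general be removed, which is precisely why the proposition is only a \emph{weaker} form of the outer Hölder inequality, as advertised in the text preceding the statement.
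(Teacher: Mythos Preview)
Your proof is correct and follows essentially the same approach as the paper: establish the pointwise rearrangement inequality $\big(\prod_j F_j\big)^{*,\s}(mt) \leq \prod_j F_j^{*,\s_j}(t)$ via finite subadditivity of the outer measure, then apply the classical H\"older inequality on the spaces $L^{q_j}\big([0,\infty), \tfrac{\d t}{t}\big)$. You supply considerably more detail than the paper's one-line ``chasing definitions''---in particular the $\varepsilon$-slack in the choice of the exceptional sets $A_j$ and the scaling substitution producing the factor $m^{1/p}$---but the skeleton is identical.
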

\begin{proof} Chasing definitions, it is immediate to see that
\[
\left[ \prod_{j=1}^m F_j\right]^{*,\s}  \left(\frac t m\right) \leq   \prod_{j=1}^m F_j ^{*,\s_k}  \left(t\right), \qquad 0<t<\infty
\]
and the claim follows from the usual H\"older inequality on the spaces ${L^{q_j}\left([0,\infty), \frac{\d t}{t} \right) }$.
\end{proof}
\begin{remark} Let the assumptions of Proposition \ref{p:holder} stand, and particularize to the case $\mathsf s=\size_1$ and $p=q=1$. Then,
\begin{equation}\label{usualholder}
	\frac{1}{|J|}\left\| \prod_{j=1}^m F_j \right\|_{\ell^1(\mathbb{S}^J)}\lesssim 
		 \left\| \prod_{j=1}^m F_j\right\|_{L^{1}(J,\kappa,\mathsf{size}_1)} \leq m\prod_{j=1}^{m}\left\| F_j \right\|_{L^{p_j}(J,\kappa,\mathsf{s}_j)} 
\end{equation}
where   \cite[Prop.\ 3.6]{DT2015} has been used to get the first bound.\end{remark}
\begin{remark} \label{weaklpineq} 
Let $A\subset \mathbb S^J$ be a set of finite outer measure ${\mu^{J,\kappa}}$. It may be checked directly  that $[\cic{1}_A]^{*,\size_\infty}= \cic{1}_{[0,{\mu^{J,\kappa}}(A))}$, so that in particular $\left\|  \cic{1}_A \right\|_{L^{p,\infty}({J,\kappa,\size}_\infty)} =  {\mu^{J,\kappa}}(A)^{\frac{1}{p}}$. Using monotonicity of the size $\s$, a particular case of Proposition \ref{p:holder} is 
\begin{equation}
\label{e:usefulp} 
\left\| F \cic{1}_A \right\|_{L^{p,q}{(J,\kappa,\s)}} \leq    2^{\frac1p}  \left\| F \cic{1}_A \right\|_{L^{p_1,q}{(J,\kappa,\s)}}  {\mu^{J,\kappa}}(A)^{\frac{1}{p}-\frac{1}{p_1}}, \qquad 0<p\leq p_1\leq \infty, \, 0<q\leq \infty.
\end{equation}
\end{remark}

	\subsection{Reverse H\"older outer $L^p$ norms} The next definition is inspired by Remark \ref{weaklpineq}. Let $\mathsf s$ be any size on $(\mathbb S^J,{\mu^{J,\kappa}},{{\mathcal T^{J,\kappa}}})$, cf.\ \cite[Def.\ 2.3]{DT2015}. 
Define, for $F:\mathbb S^J \to \mathbb C$, $1\leq a \leq p < \infty$, $1\leq q\leq \infty$ and  $\eps>0$, the quasi-norms
\[
\left\|F\right\|_{X^{p,q}_a{(J,\kappa,\s)}} \coloneqq \sup_{A\subset \mathbb S^J} \frac{\left\|F\cic{1}_A\right\|_{{L^{a,q}{(J,\kappa,\s)}}}}{ {\mu^{J,\kappa}}(A)^{\frac{1}{a}-\frac{1}{p}} },\qquad  \left\| F \right\|_{Y^{p,q}{(J,\kappa,\s)}}\coloneqq\max \left\{ \left\| F \right\|_{L^{p,q}{(J,\kappa,\s)}}, \left\| F \right\|_{L^{\infty}{(J,\kappa,\s)}}  \right\}.  
\] 
Remark  \ref{weaklpineq} tells us immediately that
$\|F\|_{X^{p,q}_a{(J,\kappa,\s)}}\leq 2^{\frac1p} \|F\|_{L^{p,q}{(J,\kappa,\s)}}
$ in the range of the definition.
The next proposition  should be interpreted as a partial converse of this control and as a substitute for Proposition \ref{p:holder} with a smaller right hand side.  The $Y^{q,\infty}{(J,\kappa,\s)}$-norm is the quantity appearing in our applications. Formally stronger versions of the proposition, where the $Y$-type norms are replaced by  suitable geometric averages of the $L^{q,\infty}{(J,\kappa,\s)}, L^{\infty}{(J,\kappa,\s)}$, are also available, but we do not detail them. 
\begin{proposition} \label{p:eHolder} Let $  m \in \N_{\geq 2} $ and  $\s_1,\s_2,\cdots,\s_m$ be $ m $ sizes on $(\mathbb S^J,{\mu^{J,\kappa}},{{\mathcal T^{J,\kappa}}})$ with the property that \eqref{e:submulti} holds with $\s=\size_1$.
	Suppose that \[
	1<a\leq p_1<\infty,  \qquad  1\leq p_2,\ldots, p_m < \infty,  \qquad  \eps \coloneqq \left({\sum_{\ell=1}^m} \textstyle\frac{1}{p_\ell} \right)-1  >0. \] Then, with    implicit constant possibly depending on $m$ only, there holds
	\[
	\frac{1}{|J|}\left\| \prod_{j=1}^m F_j \right\|_{\ell^1(\mathbb{S}^J)}	  \lesssim   \frac{  a}{\eps \left(a-1\right)} \left\| F_1 \right\|_{X^{p_1,\infty}_{a}{(J,\kappa,\s_1)}} \prod_{\ell=2}^{m}\left\| F_\ell \right\|_{Y^{p_\ell,\infty}{(J,\kappa,\s_\ell)}} 
	\]

\end{proposition}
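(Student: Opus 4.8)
The plan is to collapse the $m-1$ factors $F_2,\dots,F_m$ into a single auxiliary function, reducing matters to a two‑function situation, and then to run a \emph{one}-parameter dyadic decomposition whose geometric summation produces the sharp constant $\tfrac{a}{\eps(a-1)}$. Decomposing simultaneously along the super‑level sets of all of $F_2,\dots,F_m$ would instead generate a geometric sum in $m-1$ variables, hence a bound of order $\eps^{-(m-1)}$; the reduction below is precisely what avoids that loss.

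First I would introduce a composite size. Set $\widetilde F\coloneqq\prod_{\ell=2}^m F_\ell$ and, for $G\colon\mathbb S^J\to\mathbb C$,
\[
\widetilde{\mathsf s}(G,T)\coloneqq\inf\Bigl\{\textstyle\prod_{\ell=2}^m\mathsf s_\ell(H_\ell,T)\,:\,G=\prod_{\ell=2}^m H_\ell\Bigr\},\qquad T\in\mathcal T^{J,\kappa}.
\]
From this definition and \eqref{e:submulti} one reads off that $\widetilde{\mathsf s}$ is monotone with $\widetilde{\mathsf s}(0,\cdot)=0$, that $\widetilde{\mathsf s}\bigl(\prod_{\ell\geq2}G_\ell,T\bigr)\leq\prod_{\ell\geq2}\mathsf s_\ell(G_\ell,T)$ for every tuple, and that $\size_1(F_1 G,T)\leq\mathsf s_1(F_1,T)\,\widetilde{\mathsf s}(G,T)$ for all $F_1,G$. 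The first two properties let the proof of Proposition \ref{p:holder} run with $\widetilde{\mathsf s}$ as product size and $\mathsf s_2,\dots,\mathsf s_m$ as factor sizes: running it once with all integrability exponents $\infty$ and once with all Lorentz exponents $\infty$ gives
\[
\bigl\|\widetilde F\bigr\|_{Y^{r,\infty}(J,\kappa,\widetilde{\mathsf s})}\lesssim_m\prod_{\ell=2}^m\bigl\|F_\ell\bigr\|_{Y^{p_\ell,\infty}(J,\kappa,\mathsf s_\ell)},\qquad\frac1r\coloneqq\sum_{\ell=2}^m\frac1{p_\ell}=1+\eps-\frac1{p_1},
\]
so that $\frac1{p_1}+\frac1r-1=\eps$ and, since each $p_\ell\geq1$, $\frac1r\leq m-1$. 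I would then normalize $\|F_1\|_{X^{p_1,\infty}_a(J,\kappa,\mathsf s_1)}=1$ and $\prod_{\ell\geq2}\|F_\ell\|_{Y^{p_\ell,\infty}(J,\kappa,\mathsf s_\ell)}=1$, whence $\|\widetilde F\|_{Y^{r,\infty}(J,\kappa,\widetilde{\mathsf s})}\lesssim_m1$.

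Next I would decompose dyadically in one parameter. Using the $L^{r,\infty}$- and $L^\infty$-bounds on $\widetilde F$, pick an increasing family $\varnothing=G_{k_0}\subset G_{k_0+1}\subset\cdots$ (with $k_0\lesssim_m1$) such that $\outsup_{\widetilde{\mathsf s}}\bigl(\widetilde F\cic{1}_{\mathbb S^J\setminus G_k}\bigr)\leq2^{-k}$ and $\mu^{J,\kappa}(G_k)\lesssim_m2^{kr}$ (near‑optimal covers of $\mu_{\widetilde{\mathsf s}}[\widetilde F](2^{-k})\leq2^{kr}$, unioned for monotonicity). Put $E_k\coloneqq G_k\setminus G_{k-1}$; after the standard reduction to finitely supported inputs the $E_k$ partition $\mathbb S^J$ away from the set where $\widetilde F$, hence $\prod_j F_j$, vanishes, so by additivity of $\ell^1(\mathbb S^J)$ over disjoint supports $\tfrac1{|J|}\|\prod_j F_j\|_{\ell^1(\mathbb S^J)}=\sum_k\tfrac1{|J|}\|(\prod_j F_j)\cic{1}_{E_k}\|_{\ell^1(\mathbb S^J)}$, with $\outsup_{\widetilde{\mathsf s}}(\widetilde F\cic{1}_{E_k})\leq2^{-(k-1)}$ and $\mu^{J,\kappa}(E_k)\lesssim_m2^{kr}$ on each piece. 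Writing $a'\coloneqq\tfrac{a}{a-1}$, by \cite[Prop.\ 3.6]{DT2015} and then Proposition \ref{p:holder} for $m=2$ (factor sizes $\mathsf s_1,\widetilde{\mathsf s}$, product size $\size_1$, exponents $(a,a')$, Lorentz exponents $(\infty,1)$; legitimate by the third property of $\widetilde{\mathsf s}$),
\[
\tfrac1{|J|}\bigl\|(\textstyle\prod_j F_j)\cic{1}_{E_k}\bigr\|_{\ell^1(\mathbb S^J)}\lesssim\bigl\|F_1\cic{1}_{E_k}\bigr\|_{L^{a,\infty}(\mathsf s_1)}\bigl\|\widetilde F\cic{1}_{E_k}\bigr\|_{L^{a',1}(\widetilde{\mathsf s})}\lesssim_m\mu^{J,\kappa}(E_k)^{\frac1a-\frac1{p_1}}\cdot a'\,2^{-k}\mu^{J,\kappa}(E_k)^{\frac1{a'}},
\]
the first factor by the definition of the $X^{p_1,\infty}_a$‑norm together with $a\leq p_1$ (so the measure exponent is $\geq0$), the second because $(\widetilde F\cic{1}_{E_k})^{*,\widetilde{\mathsf s}}\leq2^{-(k-1)}$ and vanishes for $t>\mu^{J,\kappa}(E_k)$. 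Since $\tfrac1a+\tfrac1{a'}=1$, the $k$‑th summand is $\lesssim_m a'\,2^{k(r(1-\frac1{p_1})-1)}=a'\,2^{-k\eps r}$, and summing the geometric series, using $\eps<m-1$ and $\tfrac1{\eps r}=\tfrac{1+\eps-1/p_1}{\eps}<\tfrac1\eps+1$, yields $\sum_k a'2^{-k\eps r}\lesssim_m\tfrac{a'}{\eps}=\tfrac{a}{\eps(a-1)}$; undoing the normalization finishes the proof.

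The hard part will be the interplay, on each $E_k$, between the reverse‑H\"older norm of $F_1$ and the $Y$‑norms of the remaining factors: one must pair $F_1\in L^{a,\infty}$ against $\widetilde F\in L^{a',1}$ and use \emph{both} that $\widetilde F$ is $\widetilde{\mathsf s}$‑small on $E_k$ \emph{and} that it is $\widetilde{\mathsf s}$‑supported on the set $E_k$ of controlled outer measure — it is this double exploitation on a set of small outer measure that converts $\|F_1\|_{X^{p_1,\infty}_a}$ into the decaying gain $\mu^{J,\kappa}(E_k)^{1/a-1/p_1}$ making the sum converge at rate $\eps^{-1}$ rather than $\eps^{-(m-1)}$. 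The subsidiary technical matter is checking that the composite size $\widetilde{\mathsf s}$ retains enough of the size axioms (monotonicity, vanishing at $0$, and the two submultiplicativity bounds) to justify the invocations of Proposition \ref{p:holder} and the restriction of $\widetilde F$ to the $E_k$; all of these descend from the corresponding properties of $\mathsf s_2$ and from \eqref{e:submulti}.
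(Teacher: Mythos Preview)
Your argument is correct. The overall strategy coincides with the paper's: both proofs perform a single dyadic level-set decomposition for the block $F_2\cdots F_m$ and then exploit the definition of the $X^{p_1,\infty}_a$-norm on each piece to produce the factor $\tfrac{a}{a-1}$, with the remaining geometric sum giving $\eps^{-1}$. The execution differs in two respects. First, you package $F_2,\dots,F_m$ through the auxiliary composite size $\widetilde{\mathsf s}$ and decompose according to $\widetilde{\mathsf s}$-levels of $\widetilde F$, whereas the paper decomposes directly into sets $A_j$ with $\mu^{J,\kappa}(A_j)\le 2^{j}$ on which \emph{each} $\mathsf s_\ell(F_\ell)$ is $\lesssim 2^{-j/p_\ell}$; after reindexing $j\leftrightarrow kr$ the two decompositions coincide. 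Second, on each piece you recover the $\tfrac{a}{a-1}$ factor via the Lorentz pairing $L^{a,\infty}\times L^{a',1}$ of Proposition~\ref{p:holder}, while the paper instead carries out a further explicit level-set decomposition of $F_1$ on $A_j$ (the sets $B_{j,k}$) and sums the resulting geometric series by hand; your Lorentz computation is precisely that inner sum done in closed form.

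One technical caveat worth flagging: your $\widetilde{\mathsf s}$ is monotone and satisfies the two submultiplicativity bounds you state, but it is not obviously quasi-subadditive, so it is not a size in the sense of \cite[Def.~2.3]{DT2015}. This does not affect your argument, since the proof of Proposition~\ref{p:holder} uses only monotonicity of the factor sizes and the submultiplicativity hypothesis, not subadditivity of any of the sizes involved; but you should say this explicitly when invoking Proposition~\ref{p:holder} with $\widetilde{\mathsf s}$ in either role. The paper's more hands-on route sidesteps this issue entirely by never introducing $\widetilde{\mathsf s}$.
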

\begin{proof}[Proof of Proposition \ref{p:eHolder}]
 Throughout the proof, the constant implied by $\lesssim $ is allowed to depend on $m$ only and vary at each occurrence By scaling we can   assume  \[\left\| F_1 \right\|_{{X^{p_1,\infty}_{a}{(J,\kappa,\s_1)}}
}=
\left\| F_2 \right\|_{Y^{p_2,\infty}{(J,\kappa,\s_2)}}=  \cdots=\left\| F_m \right\|_{Y^{p_m,\infty}{(J,\kappa,\s_m)}}=1.
\] Under this assumption, we must prove
\begin{equation}  
	\label{e:kj0}
	\frac{1}{|J|}\sum_{P\in \mathbb S^J}|I_P| |F_1F_2 F_3  \cdots F_m(P)| \lesssim    \frac{  a}{\eps \left(a-1\right)} .
\end{equation}
Relying on the controls $\left\|F_\ell \right\|_{L^{p_\ell,\infty}{(J,\kappa,\s_\ell)}}, \left\|F_\ell \right\|_{L^{ \infty}{(J,\kappa,\s_\ell)}} \leq 1$ for all $2\leq \ell \leq m$, we iteratively decompose the support of  $F_2F_3 \cdots F_m$ into pairwise disjoint sets $A_{j}$, $j\in \mathbb N$  such that\begin{equation}  
	\label{e:kj1}
	{\mu^{J,\kappa}}(A_{j})   \leq   2^{j}, \qquad 	 \max_{2\leq \ell \leq m} 2^{\frac{j}{p_{\ell}}}{\outsup_{\s_{\ell}}} (F_{\ell}\cic{1}_{A_{j}}) \lesssim   1.\end{equation}
For  $j\in  \mathbb{N}$, let  $k(j)$ be the largest integer $k$ with $k\leq   \frac{ a j}{p_1}$. From  the first estimate in \eqref{e:kj1} and the definition of $X^{p_1,\infty}_{a}{(J,\kappa,\s_1)}$-norm, we learn that  \[\|F_1\cic{1}_{A_j}\|_{L^{a,\infty}{(J,\kappa,\s_1)}}\leq  2^{j\left(\frac1a-\frac{1}{p_1}\right)}.\] Thus, we may further decompose $A_{j}$ into pairwise disjoint sets $\{B_{j,k}:-N\leq k\leq k(j)\}$, where $N$ is an unimportant parameter related to the outer essential supremum of $F_1$,  with
\begin{equation}  
	\label{e:kj2}
	\outsup_{\s_1} (F_1\cic{1}_{B_{j,k}}) \leq 2^{-\frac{k}{a}}, \qquad    {\mu^{J,\kappa}}(B_{j,k}) \leq 2^{k+j\left(1-\frac{a}{p_1}\right)},
\end{equation}
which means that we may find $\mathcal T_{j,k}\subset {\mathcal T^{J,\kappa}}$ with the property
\begin{equation} 
	\label{e:kj3}
	B_{j,k}\subset \bigcup_{T\in\mathcal T_{j,k} } T, \qquad \sum_{T\in \mathcal T_{j,k}} \frac{|I_T|}{|J|} \leq 2{\mu^{J,\kappa}}(B_{j,k}) \lesssim  2^{k+j\left(1-\frac{a}{p_1}\right)}  .
\end{equation}
We then estimate, using  \eqref{e:kj1}, \eqref{e:kj2} and  \eqref{e:kj3} and subsequently summing in $k$, 
\begin{equation} \begin{split} 
		\label{e:kj} & \quad 	\frac{1}{|J|}\sum_{P\in A_  j }|I_P| |F_1F_2 F_3 \cdots F_m(P)|\leq \sum_{ -N \leq k \leq k(j)}  \sum_{T\in\mathcal T_{j,k} } |I_T|  \size_1(F_1F_2 F_3 \cdots F_m \cic{1}_{B_{j,k}}, T) 
		\\
		 & \lesssim   \sum_{ -N \leq k \leq k(j)} 
		 \sum_{T\in\mathcal T_{j,k} } |I_T|  \s_1(F_1 \cic{1}_{B_{j,k}}, T)\prod_{\ell=2}^m\s_\ell(F_\ell \cic{1}_{A_j}, T)
		 \leq   2^{j\left(1-\frac{a}{p_1} - \sum_{\ell=2}^m \frac{1}{p_\ell}\right)}  \sum_{ -N \leq k \leq k(j)} 2^{\frac{a-1}{a} k} 
		 \\ & \lesssim\frac{  a}{a-1}   2^{j\left(1-\frac{a}{p_1} - \sum_{\ell=2}^m \frac{1}{p_\ell}\right)}    2^{\frac{a-1}{a} k(j)}  \lesssim \frac{ a}{a-1} 2^{j\left(1-  \sum_{\ell=1}^m \frac{1}{p_\ell}\right)} = \frac{  a}{a-1} 2^{-\eps j }.
		\end{split}
		\end{equation} 
		The claimed bound \eqref{e:kj0}   follows by summing  the   estimate of the last display over $j\in \mathbb N$.
\end{proof}

\subsection{Lacunary tree estimates} This  paragraph contains some  $\mathrm{size}_{2,\star,{\kappa}}$ estimates for $W[f]$  restricted  to lacunary trees, which we use to explain the role played by this type of trees, and that  will also be of use later. 

Throughout our first discussion, let $T $ be a lacunary tree with top data $(I_T,\xi_T)$. For simplicity, we assume $\xi_T=0$, as the general case of our observations can be recovered by suitably pre- and post-composing with $\mathrm{Mod}_{\pm\xi_T}$.
Disjointness of frequency support and rapid decay tell us that whenever $P,P'\in T$ and $\phi_{P}\in \Phi(P),\phi_{P'}\in \Phi(P'),$
\[ \ell_{I_P}=  \ell_{I_{P'}}\implies
\left|\langle \phi_{P},\phi_{P'}\rangle \right|  \lesssim |I_P|^{-1} \left\langle \frac{c_{I_P}-c_{I_{P'}}}{\ell_{I_P}}   \right\rangle^{-M},
 \qquad   \ell_{I_P}\neq  \ell_{I_{P'}}\implies \langle\phi_{P},\phi_{P'}\rangle = 0.
\]
This observation and standard kernel estimates tell us that the operator 
\[
f\mapsto H_T f\coloneqq  \sum_{P\in T} |I_P| \langle   f, \phi_{P} \rangle \varphi_P , \qquad  \phi_{P},\varphi_P\in \Phi(P)\quad \forall P \in T
\] 
and its adjoint are standard $L^2$-bounded Calder\'on-Zygmund operators. Thus, Calder\'on-Zygmund theory and the localization trick yield in particular that
\[
\frac{1}{|I_T|} \|H_T f\|_{1,\infty} \lesssim \llangle f \rrangle_{1,I_T}, \qquad 
\frac{1}{|I_T|^{\frac1p}} \|H_T f\|_{p} \lesssim_p \llangle f \rrangle_{p,I_T},
\]
the latter inequality being true for all $1<p<\infty$. In particular
\[
\size_{2}(W[f],T) \sim  {|I_T|^{-\frac12}} \|H_T f\|_{2}\lesssim\llangle f \rrangle_{2,I_T} \lesssim \|f\|_{\infty}
\] with 
$\phi_P, \varphi_P$   suitably chosen so that the first absolute equivalence holds.  We have just proved the outer estimate
\begin{equation}
\label{e:wpt0}
\|W[f ]\|_{L^{\infty}({J,\kappa,\size}_{2,\star})} \lesssim_p \|f\|_{\infty}.
\end{equation}
The more precise localized estimate of the next proposition may be proved using a semi-discrete analogue of $H_T$ and the John-Str\"omberg inequality. The argument is a variation on \cite[Prop.\ 9.3]{HytLac}.
Associate to a collection of tiles $ \mathbb{P}\subset \mathbb S $  and $ f \in L^{\infty}_0(\R)$ the quasinorms \begin{equation}
\label{e:localpnorm}
 [f]_{p,\mathbb{P}}\coloneqq\sup_{P \in \mathbb{P}} \inf_{I_P} \mathrm{M}_pf, \qquad 0<p<\infty .\end{equation}
\begin{proposition} \label{sizelemmacompressed}  $\displaystyle  
	\| W[f] \cic{1}_{\mathbb{P}} \|_{L^{\infty}({J,\kappa,\size}_{2,\star})} \lesssim \left\langle \mathrm{dist}(J, \supp f)\right\rangle^{-2^8} [f]_{1,\mathbb{P} }.$
\end{proposition}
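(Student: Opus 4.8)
The plan is to reduce the claimed bound to a single-lacunary-tree statement and then run a semi-discrete, John--Str\"omberg argument adapted from \cite[Prop.\ 9.3]{HytLac}, keeping careful track of the spatial tail factor $\langle \dist(J,\supp f)\rangle^{-2^8}$. First I would unpack the left-hand side: by definition of $\size_{2,\star}$ and of the outer $L^\infty$ norm, it suffices to bound $\size_2(W[f]\cic 1_{\mathbb P}, U)$ uniformly over all lacunary trees $U\in\mathcal T^{J,\kappa}$ with $U\subset\mathbb P$. Fix such a $U$ with top data $(I_U,\xi_U)$; after conjugating by $\mathrm{Mod}_{\pm\xi_U}$ we may assume $\xi_U=0$, so all $\omega_P$, $P\in U$, are pairwise disjoint dyadic intervals not containing the origin. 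Since $U\subset\mathbb S^J$ we have $I_U\subset J$, and since $U\subset\mathbb P$, for every $P\in U$ there is a point in $I_P\subset I_U$ at which $\mathrm M_1 f$ is bounded by $[f]_{1,\mathbb P}$; this is the mechanism by which the right-hand-side quantity $[f]_{1,\mathbb P}$ will enter.

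The core estimate is that $\size_2(W[f],U)\sim |I_U|^{-1/2}\|H_U f\|_2$ for a suitable choice of wave packets realizing the sup in $W[f](P)$ (as in the discussion preceding the proposition), where $H_U$ is the associated Calder\'on--Zygmund-type operator $f\mapsto \sum_{P\in U}|I_P|\langle f,\phi_P\rangle\varphi_P$. Rather than use the crude $L^2$ bound, I would use a semi-discrete analogue $\widetilde H_U$ of $H_U$ — effectively an honest frequency-projected singular integral truncated to scales and location matching $U$ — whose kernel is supported (up to rapidly decaying tails) in $I_U\times I_U$ and which, via the John--Str\"omberg inequality, controls $|I_U|^{-1/2}\|H_U f\|_2$ by the $L^{1,\infty}$-type local mean oscillation of $\widetilde H_U f$ on $I_U$. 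Calder\'on--Zygmund theory then bounds this by the tailed local average $\llangle f\rrangle_{1,I_U}$, i.e.\ by $|I_U|^{-1}\|f\chi_{I_U}^{2^8}\|_1$. The point of doing it this way rather than through $\llangle f\rrangle_{2,I_U}$ is that the $p=1$ endpoint is available for free for CZ operators, and $\llangle\cdot\rrangle_{1,I_U}$ is exactly what pairs with $[f]_{1,\mathbb P}$.

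The final step is to convert $\llangle f\rrangle_{1,I_U}$ into $\langle\dist(J,\supp f)\rangle^{-2^8}[f]_{1,\mathbb P}$. Here I split according to whether $\supp f$ is ``close to'' $I_U$ relative to $\ell_{I_U}$. If $\dist(I_U,\supp f)\lesssim \ell_{I_U}$, then the polynomial weight $\chi_{I_U}^{2^8}$ is $\gtrsim 1$ on a fixed dilate of $I_U$ that meets $\supp f$, so $\llangle f\rrangle_{1,I_U}\lesssim \inf_{I_P}\mathrm M_1 f\leq [f]_{1,\mathbb P}$ for any $P\in U$ (using $I_P\subset I_U$), while also $\langle\dist(J,\supp f)\rangle\sim\langle\dist(I_U,\supp f)/\ell_{I_U}+1\rangle\lesssim 1$ may fail — so in this regime I instead just note $\langle\dist(J,\supp f)\rangle^{-2^8}\le 1$ and are done. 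If $\dist(I_U,\supp f)\gg\ell_{I_U}$, the tail of $\chi_{I_U}^{2^8}$ at $\supp f$ is $\lesssim (\dist(I_U,\supp f)/\ell_{I_U})^{-2^8}$; splitting the exponent, half of it gives the decay factor $\langle\dist(J,\supp f)\rangle^{-2^8}$ (using $I_U\subset J$ so $\dist(J,\supp f)\le\dist(I_U,\supp f)$) and the remaining half, combined with a further application of $\mathrm M_1$ at a point of $I_P$, yields a bound by $[f]_{1,\mathbb P}$ after summing the geometric tail over dyadic annuli around $I_U$. Taking the supremum over all lacunary $U\subset\mathbb P$ completes the proof.

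The main obstacle I anticipate is the second step: making precise the semi-discrete operator $\widetilde H_U$ and verifying that the John--Str\"omberg inequality applies to it with constants uniform in $U$ and in the (arbitrary) lacunary tree structure — in particular checking that the truncation does not destroy the CZ kernel bounds and that the median/local-oscillation quantities are comparable to $\size_2(W[f],U)$ up to the freedom in choosing $\phi_P,\varphi_P\in\Phi(P)$. Once that localization-and-oscillation machinery is in place, the tail bookkeeping in the last step is routine.
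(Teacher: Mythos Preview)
Your proposal is correct and follows essentially the same route as the paper: reduce to a single lacunary tree, conjugate the frequency to the origin, pass through a John--Str\"omberg/BMO estimate, and invoke the weak-$(1,1)$ bound for a Calder\'on--Zygmund operator to land on local $L^1$ averages controlled by $[f]_{1,\mathbb P}$.

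Two implementation details from the paper resolve the obstacle you flag. First, the semi-discrete operator is made completely explicit by placing Haar wavelets on the \emph{output} side: with one tile per spatial interval, one writes $\sum_{I\in\mathcal I(T_\xi)}\langle f,\varphi_I\rangle h_I$ and identifies $\size_2$ with the dyadic BMO norm of this expression, so that John--Str\"omberg applies verbatim. Second, for each testing interval $K$ the paper decomposes by the \emph{maximal} spatial intervals $I\in\mathcal I^*(K)$ of the tree contained in $K$; the corresponding pieces $H_{I,\mathrm{semi}}f=\sum_{J\subset I}\langle f,\varphi_J\rangle h_J$ have disjoint supports and are uniform CZ operators, so the weak-$(1,1)$ bound plus localization gives $\|H_{I,\mathrm{semi}}f\|_{1,\infty}\lesssim |I|\llangle f\rrangle_{1,I}\lesssim |I|\,[f]_{1,\mathbb P}$ directly (each such $I$ is an $I_P$). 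This is slightly cleaner than routing through $\llangle f\rrangle_{1,I_U}$, though your version also works since any nonempty $U$ contains some $I_P\subset I_U$. Finally, the decay factor is obtained in the paper by a one-line localization trick---apply the partial bound to $f\chi_J$ in place of $f$---rather than the case split you outline; both are valid.
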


\begin{proof}  There is no loss in generality with assuming $\mathbb P \subset \mathbb S^J$. For $\xi \in \R$, denote by $T_\xi=\{P \in \mathbb P: \xi\in  \omega_P^{\mathsf{p}(\kappa)} \}$. Note that $T_\xi$ is a tree with top data $(J,\xi)$. Then   
\begin{equation}
\label{e:proof341}  \| W[f] \cic{1}_{\mathbb{P}} \|_{L^{\infty}({J,\kappa,\size}_{2,\star})}\leq 2
\sup_{\xi \in \R} \sup_{T\subset T_\xi} |I_T|^{-\frac12} \left\|\langle f,\phi_P\rangle \cic{1}_T(P)\right\|_{\ell^2_P(\mathbb S^J)}
\end{equation}
for suitably chosen  $\phi_P\in \Phi(P)$.
So we fix $\xi$ and estimate $\sup_{T\subset T_\xi}   \left\|\langle f,\phi_P\rangle \cic{1}_T(P)\right\|_{\ell^2_P(\mathbb S^J)}$. By composing with modulations, we may reduce to $\xi=0$, and by \eqref{e:spectree} and finite splitting , we may also reduce to having $\#\{P\in T_\xi: I_P=I\}=1$ for all $I\in \mathcal I(T_\xi)$. Then 
\begin{equation}
\label{e:proof342}
\sup_{T\subset T_\xi} \frac{\left\|\langle f,\phi_P\rangle \cic{1}_T(P)\right\|_{ {\ell^2_P(\mathbb S^J)}}}{|I_T|^{\frac12} } \leq \sup_{\substack {K \in \mathcal D \\ K \subset J} }  \frac{ \left\|\langle f,\phi_P\rangle \cic{1}_{\{P\in T_\xi: I_P\subset K\}}\right\|_{\ell^2_P(\mathbb S^J)}}{|K|^{\frac12} }   = \left\| \sum_{I\in \mathcal I(T_\xi)} \langle f,\varphi_I\rangle h_{I} \right\|_{\mathrm{BMO} } 
\end{equation}
where we have set  $\varphi_I=\sqrt{|I_P|}\phi_{P}$ for the unique $P\in T_\xi$ with $I_P=I$, $h_I$ stands for the $L^2$-normalized Haar wavelet on $I$, and we mean the dyadic BMO. For $K\in \mathcal D, K \subset J$, let $\mathcal I^*(K)$ be the collection of maximal intervals in $I\in \mathcal I(T_\xi)$ with $I \subset K$. The John-Str\"omberg inequality, followed by disjointness of $I \in \mathcal I^*(K) $ tells us that
\begin{equation}
\label{e:proof343}
\begin{split}
\left\| \sum_{I\in \mathcal I(T_\xi)} \langle f,\varphi_I\rangle h_{I} \right\|_{\mathrm{BMO}} \lesssim \sup_{\substack {K \in \mathcal D \\ K \subset J} } \frac{1}{|K|}\left\| \sum_{\substack{I\in \mathcal I(T_\xi)\\I \subset K}}   \langle f,\varphi_I\rangle h_{I} \right\|_{1,\infty}  = \sup_{\substack {K \in \mathcal D \\ K \subset J} } \sum_{I\in \mathcal I^*(K)}\frac {\left\| H_{I,\mathrm{semi}} f  \right\|_{1,\infty}}{{|K|}}
\end{split}
\end{equation}
having set
\begin{equation}
\label{e:proof344}
H_{I,\mathrm{semi}} f\coloneqq \sum_{\substack{J\in \mathcal I(T_\xi)\\J \subset I}}   \langle f,\phi_J\rangle h_{J}.
\end{equation}
Standard kernel computations tell us that $H_{I,\mathrm{semi}}$ is also an $L^2$-bounded Calder\'on-Zygmund operator and in particular is uniformly of type weak-$(1,1)$. Combining with the localization trick on $I\in\mathcal I^*(K)  $,
\begin{equation}
\label{e:proof345}
\left\| H_{I,\mathrm{semi}} f  \right\|_{1,\infty} \lesssim |I| \llangle f\rrangle_{1,I} \lesssim |I| \inf_{ I} \mathrm{M}_1f \leq |I|  [f]_{1,\mathbb{P}}.
\end{equation} 
Inserting the estimate \eqref{e:proof345} into \eqref{e:proof343}, summing over the disjoint $I\in \mathcal I^*(K)$, and perusing \eqref{e:proof341}-\eqref{e:proof342} yields the partial bound
$\| W[f] \cic{1}_{\mathbb{P}} \|_{L^{\infty}({J,\kappa,\size}_{2,\star})} \lesssim  [f]_{1,\mathbb{P} }$. The additional decay factor may be easily obtained by a localization trick  followed by the partial result applied to  $f\chi_J$ in place of $f$.
\end{proof}
The following technical lemma will allow us to estimate the $ L^{\infty}({J,\kappa,\size}_{2,\star}) $ norm of the wave packet transform restricted to a collection $\mathbb P$ which is covered by a certain set of top data. It will not be used until Section \ref{s:lwpe}, but this is the most appropriate location for its proof. Notice that $T(I,\xi)$ appearing in the statement that follows is a $\kappa$-tree with top data $(I,\xi)$.
\begin{lemma} \label{treestructure} 
	Let $\mathbb P\subset \mathbb S$ and   $ \F\subset \mathcal D\times \R $ be a collection of top data covering $\mathbb P$, in the sense that
	\[
	\mathbb P=\bigcup_{(I,\xi)\in \F} T(I,\xi), \qquad T(I,\xi)\coloneqq\left\{P\in \mathbb P: I_P\subset I,\xi\in  \omega_P^{\mathsf{p}(\kappa) }\right\}.
	\]
	Then 
	\[ \left\| W[f] \cic{1}_{\mathbb P} \right\|_{L^{\infty}({J,\kappa,\size}_{2,\star})} \leq 2^{\frac\kappa 2}\ds{\sup_{(I,\xi) \in \F}} \; \size_{2,\star, {\kappa}}(W[f],T(I,\xi)).\] 
\end{lemma}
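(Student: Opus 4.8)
The plan is to reduce the claimed bound to a union of at most $2^\kappa$ lacunary sub-families, exploiting the elementary observation that $\size_{2,\star}$ takes a supremum over lacunary sub-trees. Fix $F=W[f]$. Given $(I,\xi)\in\F$, the tree $T(I,\xi)$ splits, as recorded before Lemma~\ref{lacpartislac}, into the $2^\kappa$ pieces $T(I,\xi)_{|j}$, $j=1,\ldots,2^\kappa$, each a $\kappa$-tree of type $j$ with the same top data. The first step is therefore to record that
\[
\size_{2,\star,\kappa}(F,T(I,\xi)) = \sup\bigl\{ \size_2(F,U): U\subset T(I,\xi)\text{ lacunary}\bigr\}
\]
dominates $\size_2(F,U)$ for every lacunary $U$ contained in any $T(I,\xi)$; this is just the definition \eqref{e:sizestar}.

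The heart of the argument is to estimate $\size_{2,\star}(F,S)$ for an \emph{arbitrary} $\kappa$-tree $S\subset\mathbb S^J$ (with arbitrary top data) by the supremum over $(I,\xi)\in\F$ of $\size_{2,\star,\kappa}(F,T(I,\xi))$, losing only a factor $2^{\kappa/2}$. So fix such an $S$ and a lacunary $U\subset S$; we must bound $\size_2(F,U)$. Since $U\subset\mathbb P$ and $\F$ covers $\mathbb P$, for each $P\in U$ pick $(I,\xi)\in\F$ with $P\in T(I,\xi)$; this partitions $U$ into at most countably many pieces, but what matters is that we can group by the \emph{type}: for each $j\in\{1,\ldots,2^\kappa\}$ let $U_j$ be the set of $P\in U$ whose chosen ancestor interval $\omega_P^{\mathsf p(\kappa)}$ has $P$ sitting as its $j$-th $\kappa$-grandchild. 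Then $U=\bigsqcup_{j=1}^{2^\kappa} U_j$, and crucially \emph{each $U_j$, together with the relevant top data, is a lacunary $\kappa$-tree of type $j$ contained in some $T(I,\xi)$} — here lacunarity of $U$ is inherited (frequency supports in $U$ are pairwise disjoint, hence so are those in $U_j$), and being of a fixed type $j$ is exactly what makes $U_j$ sit inside a single $T(I,\xi)$ of type $j$ once we pass to its top. Thus $\size_2(F,U_j)\le \size_{2,\star,\kappa}(F,T(I_j,\xi_j))\le \sup_\F \size_{2,\star,\kappa}(F,T(I,\xi))$ for each $j$.

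It remains to reassemble: since $\size_2(F,U)=|I_U|^{-1/2}\|F\cic1_U\|_{\ell^2(\mathbb S^J)}$ and $U=\bigsqcup_j U_j$ with all pieces having the same spatial top $|I_U|$, we get
\[
\size_2(F,U)^2 = \sum_{j=1}^{2^\kappa} \frac{\|F\cic1_{U_j}\|_{\ell^2(\mathbb S^J)}^2}{|I_U|} \le \sum_{j=1}^{2^\kappa}\size_2(F,U_j)^2 \le 2^\kappa \Bigl(\sup_{(I,\xi)\in\F}\size_{2,\star,\kappa}(F,T(I,\xi))\Bigr)^2,
\]
where the first inequality uses that the spatial top of $U_j$ is contained in that of $U$, so $|I_{U_j}|\le|I_U|$. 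Taking square roots and then the supremum over lacunary $U\subset S$ and over all $\kappa$-trees $S\subset\mathbb S^J$ gives $\|F\cic1_{\mathbb P}\|_{L^\infty(J,\kappa,\size_{2,\star})}\le 2^{\kappa/2}\sup_\F\size_{2,\star,\kappa}(F,T(I,\xi))$, as claimed. The main obstacle — really the only point requiring care — is the middle step: verifying that when one splits a lacunary tree $U$ by the grandchild-type $j$, each type-$j$ piece genuinely lands inside one of the prescribed trees $T(I,\xi)$ of the covering $\F$, i.e. that the top data can be chosen uniformly within a type class; this is where the structure of $\mathbb S$ (nestedness of the $\omega_P^{\mathsf p(\kappa)}$, Remark~\ref{singlescalesingletime}) and the hypothesis that $\F$ covers $\mathbb P$ are used in tandem.
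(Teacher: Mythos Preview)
There is a genuine gap in the central step. Your claim that each type-$j$ piece $U_j$ ``sits inside a single $T(I,\xi)$'' with $(I,\xi)\in\F$ is not justified, and is false in general. Splitting by grandchild type only constrains the \emph{frequency} structure of $U_j$; it does nothing to force spatial localization. Concretely, take $\kappa=1$, let $U=\{P_1,P_2\}$ with $\omega_{P_1}=\omega_{P_2}$ (hence both of the same type and $U$ trivially lacunary) but $I_{P_1},I_{P_2}$ disjoint, and let $\F=\{(I_{P_1},\xi),(I_{P_2},\xi)\}$ with $\xi\in\omega_{P_1}^{\mathsf p(1)}$. Then $\F$ covers $\mathbb P=\{P_1,P_2\}$, yet $U_j=U$ is not contained in either $T(I_{P_1},\xi)$ or $T(I_{P_2},\xi)$. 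Nestedness of the $\omega_P^{\mathsf p(\kappa)}$ does not help here: the obstruction is purely spatial.

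The paper's argument decomposes in the orthogonal direction. Given a lacunary $\kappa$-tree $T\subset\mathbb P$, it selects the tiles $P\in T$ whose spatial interval $I_P$ is \emph{maximal} in $\mathcal I(T)$, and sets $T(P)=\{Q\in T:I_Q\subset I_P\}$. Each such $P$, being in $\mathbb P$, lies in some $T(I(P),\xi(P))$ with $(I(P),\xi(P))\in\F$; the tree property of $T$ then forces $\omega_{P}^{\mathsf p(\kappa)}\subset\omega_Q^{\mathsf p(\kappa)}$ for all $Q\in T(P)$, so $\xi(P)\in\omega_Q^{\mathsf p(\kappa)}$ as well, giving $T(P)\subset T(I(P),\xi(P))$. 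The factor $2^{\kappa/2}$ arises because the maximal spatial intervals are pairwise disjoint and contained in $I_T$, while \eqref{e:spectree} bounds the multiplicity of tiles sharing a given maximal $I_P$ by $2^\kappa$. In short: you must slice spatially (by maximal $I_P$) rather than by frequency type, because membership in a covering tree $T(I,\xi)$ imposes a spatial containment $I_P\subset I$ that type alone cannot deliver.
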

\begin{proof}There is no loss in generality with assuming $\mathbb P \subset \mathbb S^J$, and we do so.
Fix  a lacunary $ \kappa $-tree $  T \subseteq \mathbb{P} $ and let $(I_T,\xi_T)$ be its top data. Note that $(I_T,\xi_T)$ does not necessarily belong to $\F$.
Say that  $P \in \mathbb P ^{T,\star}$ if $P\in T$ and $I_P$ is a maximal element of   $\mathcal I(T)$ with respect to inclusion. By assumption, for each $P\in \mathbb P ^{T,\star}$ we may find $(I(P),\xi(P))\in \mathcal F$ with $I_P \subset I(P)$ and $\xi(P)\in \omega_P^{\mathsf{p(\kappa)}} $.  Clearly   
\[T=\bigcup_{P \in \mathbb P ^{T,\star}}T(P), \qquad T(P)\coloneqq\left\{Q=I_Q\times \omega_Q \in T: I_Q \subset   I_P \right\}\]
The fact that $T$ is a tree guarantees if $Q\in T(P)$ then   $\xi_T \in \omega_P^{\mathsf{p(\kappa)}}\cap \omega_Q^{\mathsf{p(\kappa)}} $, and comparing scales 
$\xi(P)\in  \omega_P^{\mathsf{p(\kappa)}}\subset \omega_Q^{\mathsf{p(\kappa)}}$. Therefore $T(P) $ is a $ \kappa $-lacunary tree with top data $ \left(I_P,\xi(P)\right) $, whence the inclusion $T(P)\subset T(I(P),\xi(P))$ for all $P\in \mathbb P ^{T,\star}$, and 
\[
\size_{2}(W[f],T(P)) \leq \size_{2,\star,\kappa}(W[f],T(I(P),\xi(P)))\leq \sup_{(I,\xi)\in \mathcal F} \size_{2,\star,\kappa}(W[f],T(I,\xi)).
\]
Using \eqref{e:spectree} and disjointness of the maximal elements of $\mathcal I(T)$, which are all contained in $I_T$, \[ 
\begin{split}
&	\size_{2}(W[f],T)\leq  \left( \frac{1}{|I_T|} \sum_{P \in {\mathbb P}^{T,\star}} |I_P|\big[
\size_{2}(W[f],T(P))\big]^2 \right)^{\frac12} 
  \leq  2^{\frac \kappa 2}\sup_{(I,\xi)\in \mathcal F} \size_{2,\star,\kappa}(W[f],T(I,\xi))
\end{split} 
\] 
which completes the proof of our main claim.\end{proof}

\subsection{Local $L^2$-bound for maximal modulations  via wave packet estimates}
  In this paragraph, as a motivating example, two more outer $L^p$ estimates for the wave packet transforms \eqref{e:wpt}-\eqref{e:wpt} are stated and combined into a proof of  $L^p$-boundedness for the maximal modulated singular multiplier of \eqref{carleson} in the local $L^2$-range.
The first concerns the wave packet transform \eqref{e:wpt} \begin{proposition} \label{usual}Let $J\in \mathcal D$ and   $f\in L^{\infty}_{0}(\R)$.  Then	
	\begin{align}
\label{usual2}	&\|W[f]\|_{L^{2,\infty}({J,\kappa,\size}_{2,\star})} \lesssim   \llangle f \rrangle_{2,3J} \\ \label{usualp} & \|W[f]\|_{L^{p}({J,\kappa,\size}_{2,\star})} \lesssim_p \llangle f \rrangle_{p,3J}, \qquad 2<p\leq\infty.
	\end{align}
\end{proposition}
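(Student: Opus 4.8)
\textbf{Proof plan for Proposition \ref{usual}.}
The plan is to establish \eqref{usual2} first and then obtain \eqref{usualp} by essentially the same argument with $L^2$ replaced by $L^p$, $p>2$, so I will describe the $p=2$ weak-type case in detail and indicate the modifications at the end. The starting point is the observation, already made above in the discussion preceding \eqref{e:wpt0}, that for a lacunary $\kappa$-tree $U\subset \mathbb S^J$ with top data $(I_U,\xi_U)$ one has the absolute equivalence $\size_2(W[f],U)\sim |I_U|^{-\frac12}\|H_Uf\|_2$ for a suitable choice of wave packets $\phi_P,\varphi_P\in\Phi(P)$, where $H_U$ is the associated Calder\'on--Zygmund operator $f\mapsto\sum_{P\in U}|I_P|\langle f,\phi_P\rangle\varphi_P$. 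Hence, for a fixed level $\tau>0$, controlling the superlevel measure $\mu_{\size_{2,\star}}[W[f]](\tau)$ amounts to selecting a collection $\mathcal T$ of $\kappa$-trees covering $\{P:W[f](P)>c\tau\}$ (after removing a set on which $\size_{2,\star}$ is $\le\tau$) with $\frac{1}{|J|}\sum_{T\in\mathcal T}|I_T|\lesssim \tau^{-2}\llangle f\rrangle_{2,3J}^2$.

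The selection is the classical tree-selection algorithm adapted to the outer framework, done in two layers. In the outer layer I iteratively pick, among all $\kappa$-trees $T\subset\mathbb S^J$ remaining, one with $\size_{\size_{2,\star}}$-size (roughly $\sup$ over lacunary subtrees of $\size_2$) at least $\tau$; by Lemma \ref{lacpartislac} and the single-scale bound \eqref{e:spectree}, this reduces to picking lacunary trees $U$ with $|I_U|^{-\frac12}\|H_Uf\|_2\gtrsim\tau$, where we may take $U$ to be the overlapping-direction-purged part. The key quantitative input is the standard energy/almost-orthogonality estimate: if $\{U_i\}$ is the collection of selected lacunary trees, the functions $H_{U_i}f$ live on almost-disjoint frequency cones relative to the top frequencies $\xi_{U_i}$, so $\sum_i\|H_{U_i}f\|_2^2\lesssim \|f\cic{1}_{3J}\|_2^2+$ (tail terms controlled by $\chi_J$-decay), giving $\sum_i|I_{U_i}|\lesssim\tau^{-2}|J|\llangle f\rrangle_{2,3J}^2$. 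This is exactly the content of the local $L^2$ energy estimate; it is the engine that produces the $\tau^{-2}$ and the weak-$L^2$ bound. Summing the trees produced at each dyadic level $\tau\in 2^{\mathbb Z}$ and invoking the definition \eqref{e:outerLpsp} of the $L^{2,\infty}$ quasinorm then yields \eqref{usual2}.

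For \eqref{usualp} with $2<p\le\infty$, the $p=\infty$ endpoint is already \eqref{e:wpt0} (with the localization improvement coming from the $\chi_{3J}$-tail as in the proof of Proposition \ref{sizelemmacompressed}); the intermediate range follows by the same selection, but now using that $H_U$ maps $L^p(3J)\to L^p$ boundedly with norm $\lesssim_p\llangle f\rrangle_{p,3J}$ (Calder\'on--Zygmund theory plus the localization trick, as recalled in the text) together with the $L^2$-orthogonality still bounding the number of selected trees. Alternatively and more cleanly, one interpolates \eqref{usual2} against \eqref{e:wpt0} within the outer $L^p$ scale using the outer interpolation of \cite{DT2015}, which immediately gives the strong $L^p$ bound for $2<p<\infty$. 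The main obstacle, as always in these arguments, is the orthogonality bookkeeping in the tree selection: ensuring that the selected lacunary trees have genuinely separated top frequencies so that the Bessel-type inequality $\sum_i\|H_{U_i}f\|_2^2\lesssim\|f\|_{2,3J}^2$ applies, and carefully tracking the polynomial tails $\chi_{3J}$ produced when $f$ is not supported in a fixed dilate of $J$. Everything else is routine and parallels \cite[Prop.\ 9.3]{HytLac} and the standard Lacey--Thiele machinery.
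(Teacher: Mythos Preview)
Your plan is essentially correct and matches what the paper does. The paper does not give a self-contained proof here: it simply cites \cite[Theorem 5.1]{DT2015} for \eqref{usual2} and then obtains \eqref{usualp} by outer interpolation of \eqref{usual2} with the $L^\infty$ bound \eqref{e:wpt0} via \cite[Prop.\ 3.5]{DT2015}. Your sketch of the tree-selection/energy argument is precisely the machinery behind the cited theorem, and your ``alternative'' route to \eqref{usualp} via outer interpolation is exactly the paper's argument.

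One small caution: your first proposed route to \eqref{usualp}, running the selection with $L^p$ boundedness of $H_U$ ``together with the $L^2$-orthogonality still bounding the number of selected trees'', does not assemble into a proof as stated. The Bessel-type inequality $\sum_i\|H_{U_i}f\|_2^2\lesssim\|f\|_2^2$ is what both bounds the tree count \emph{and} produces the $\tau^{-2}$; swapping in $L^p$ information for $H_U$ at the single-tree level does not change the exponent in $\tau$, so you only recover a weak-$L^2$ type control of the superlevel measure, not a strong $L^p$ one. Stick with the interpolation argument, which is clean and is what the paper actually invokes.
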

The bound \eqref{usual2}  is  a restatement of \cite[Theorem 5.1]{DT2015}, see also \cite{CDPOUBHT,DPOu}.   Once \eqref{usual2} is at disposal, \eqref{usualp} follows immediately from its outer $L^p$ interpolation with e.g.\ \eqref{e:wpt0}; an appropriate interpolation theorem is \cite[Prop.\ 3.5]{DT2015}.   A similar, but broader set of estimates is available for the $  L^{p}(\size_{\mathsf{C}},J) $ norms of \eqref{e:wptA}. As anticipated, the outer norms below refer to the case $\kappa=1$.
\begin{proposition} \label{usualforA} Let $J\in \mathcal D$ and   $f\in L^{\infty}_{0}(\R)$.  Then	\[
	\begin{split}
	&\|A[f\cic{1}_{3J}]\|_{L^{1,\infty}({J,1,\size}_{\mathsf{C}})} \lesssim \langle f \rangle_{1,3J}, \\ & \|A[f\cic{1}_{3J}]\|_{L^{p}({J,1,\size}_{\mathsf{C}})} \lesssim_p \langle f \rangle_{p,3J}, \qquad 1<p\leq\infty.
	\end{split}
	\]
\end{proposition}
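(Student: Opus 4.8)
\textbf{Proof plan for Proposition \ref{usualforA}.}
The plan is to reduce both estimates to the wave packet transform bounds of Proposition \ref{usual} by absorbing the extra structure of $A[\cdot]$, namely the $\nu$-dependence governed by \eqref{e:refwpsO} and the cutoff $\cic{1}_{\omega_P^{\mathsf b}}(N(\cdot))$, into the size comparison. First I would observe that, by the structural decomposition \eqref{e:lacov}, a $1$-tree $T$ splits as $T^{\mathsf{ov}}\sqcup T^{\mathsf{lac}}$, and $\size_{\mathsf C}$ is by definition \eqref{e:sized} the sum of a $\size_2$ contribution from $T^{\mathsf{lac}}$ and a $\size_1$ contribution from $T^{\mathsf{ov}}$. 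So it suffices to prove two families of bounds: one controlling $\size_2(A[f\cic{1}_{3J}],T^{\mathsf{lac}})$ in outer $L^p$ by the $\size_{2,\star}$ estimates of Proposition \ref{usual}, and one controlling $\size_1(A[f\cic{1}_{3J}],T^{\mathsf{ov}})$. The overlapping part is the easier one: on $T^{\mathsf{ov}}$ all the $\omega_P$ contain the common frequency $\xi_T$, so the relevant tiles at a fixed scale are $O(1)$ in number, the wave packets $\psi(\cdot,N(\cdot))\cic{1}_{\omega_P^{\mathsf b}}(N(\cdot))$ behave like a single (linearized, $N(\cdot)$-frozen) $L^1$-adapted bump adapted to $I_P$ up to harmless tails, and one gets $\size_1(A[f\cic{1}_{3J}],T^{\mathsf{ov}})\lesssim \langle |f|\cic{1}_{3J}\rangle_{1,I_T}$ essentially by the triangle inequality and the rapid decay of $\chi_{I_P}$, from which the outer $L^1$ and $L^p$ bounds follow from the elementary tree-counting built into $\mu^{J,1}$.

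For the lacunary part, the key point is a pointwise domination $A[f\cic{1}_{3J}](P)\lesssim \widetilde W[f\cic{1}_{3J}](P)$ valid tile-by-tile, where $\widetilde W$ is a wave packet transform of the same type as \eqref{e:wpt} but with a slightly enlarged adapted class of order $M-1$ (to account for the one derivative in $\nu$ allowed by \eqref{e:refwpsO}) and with the frequency support of the wave packet relaxed from $\omega_P$ to $\omega_P^{\mathsf b}$ or $\omega_P^{\mathsf p(1)}$. Indeed, for any admissible $\psi\in\Psi^M(P)$ and any measurable $N(\cdot)$, after linearizing the inner supremum and freezing $N$ on the spatial region where it lands in $\omega_P^{\mathsf b}$, the function $x\mapsto \psi(x,N(x))\cic{1}_{\omega_P^{\mathsf b}}(N(x))$ is, up to a uniform constant and the insertion of a Fourier projection onto a fixed parent interval, an element of $\Phi^{M-1}(P')$ where $P'$ is the tile with $I_{P'}=I_P$, $\omega_{P'}=\omega_P^{\mathsf b}$; the extra cutoff in $N$ only helps. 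Since $\size_{2,\star}$ and hence the full chain of estimates in Proposition \ref{usual} are insensitive to replacing $M$ by $M-1$ and to passing between a tile and its sibling (these operations only change implied constants and the shifted-grid bookkeeping), this domination transfers \eqref{usual2}--\eqref{usualp} directly to $A[f\cic{1}_{3J}]$ on the lacunary part, yielding the stated $\langle f\rangle_{1,3J}$ and $\langle f\rangle_{p,3J}$ bounds — note the outer norms here come out in terms of untailed averages because $f$ has already been truncated to $3J$, so the tail factors $\llangle\cdot\rrangle$ collapse to $\langle\cdot\rangle$ up to constants.

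The main obstacle I anticipate is the linearization/freezing step in the lacunary part: the function $N(\cdot)$ is arbitrary measurable, so $x\mapsto\psi(x,N(x))$ is only measurable, not smooth, and one cannot literally call it an adapted wave packet. The resolution is to exploit \eqref{e:refwpsO} — the hypothesis $\bigl[\partial_\nu/\scl(P)\bigr]^a\phi(\cdot,\nu)\in\Phi^M(P)$ for $a=0,1$ means $\psi(\cdot,N(x))$ differs from $\psi(\cdot,c(\omega_P^{\mathsf b}))$ by an integral of a $\scl(P)^{-1}$-normalized derivative over an interval of length $\lesssim\ell_{\omega_P^{\mathsf b}}=\scl(P)^{-1}$, so the variation is $O(1)$ and, crucially, still frequency-supported in a fixed dilate of $\omega_P$; this is exactly what makes the pairing $\langle f\cic{1}_{3J},\psi(\cdot,N(\cdot))\cic{1}_{\omega_P^{\mathsf b}}(N(\cdot))\rangle$ estimable by $W[f\cic{1}_{3J}]$ on the enlarged tile without ever differentiating $N$. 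A secondary technical point is that the cutoff $\cic{1}_{\omega_P^{\mathsf b}}(N(\cdot))$ restricts the spatial pairing to the preimage $N^{-1}(\omega_P^{\mathsf b})$, which is a measurable set and not an interval; but since $|\langle f\cic{1}_{3J}\cic{1}_{N^{-1}(\omega_P^{\mathsf b})},\psi(\cdot,N(\cdot))\rangle|\le \|\,|f|\cic{1}_{3J}\,\|_{?}$-type bounds only need the $L^1$-adapted size of $\psi$, the restriction is harmless and in fact only decreases the coefficient, so it may simply be dropped after taking absolute values inside. With these observations in place the two displays of the proposition follow from Proposition \ref{usual} and the definition \eqref{e:sized} of $\size_{\mathsf C}$, together with the trivial observation that truncating $f$ to $3J$ turns tailed averages into ordinary averages.
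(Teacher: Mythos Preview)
Your plan contains a genuine gap in the lacunary part, and it diverges significantly from the paper's actual approach.

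\medskip
\textbf{The flawed step.} You assert a tile-by-tile domination of the form $A[f\cic 1_{3J}](P)\lesssim \widetilde W[f\cic 1_{3J}](P')$, where $\widetilde W$ is a wave packet transform over a slightly enlarged adapted class. This is false. The function $x\mapsto \psi(x,N(x))\cic 1_{\omega_P^{\mathsf b}}(N(x))$ is merely measurable in $x$; it is neither smooth nor compactly Fourier-supported, so it is not, and cannot be rewritten as, an element of any $\Phi^{M'}(P')$. Your freezing argument via \eqref{e:refwpsO} does give
\[
\psi(x,N(x))=\psi(x,\xi_T)+O\big(\chi_{I_P}(x)\big)\quad\text{on }N^{-1}(\omega_P^{\mathsf b}),
\]
but the $O(\chi_{I_P})$ is a \emph{pointwise} bound only. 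Pairing that error against $f$ yields at best $\llangle f\rrangle_{1,I_P}$, which loses all the cancellation needed for a $\size_2$ control on $T^{\mathsf{lac}}$. For the main term $\langle f\cic 1_{N^{-1}(\omega_P^{\mathsf b})},\psi(\cdot,\xi_T)\rangle$, the rough spatial cutoff $\cic 1_{N^{-1}(\omega_P^{\mathsf b})}$ remains. Your suggestion to ``drop it after taking absolute values inside'' replaces $|\langle f,\phi_P\rangle|$ by $\langle |f|,|\phi_P|\rangle$, again destroying the oscillation on which Proposition~\ref{usual} relies. In short, there is no route from $A[f]$ to $W[f]$ at the level of individual tiles; the measurable linearization $N(\cdot)$ genuinely obstructs it.

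\medskip
\textbf{What the paper actually does.} The proof of Proposition~\ref{usualforA} does \emph{not} reduce to Proposition~\ref{usual}. Instead it goes through Proposition~\ref{p:modifemb}, which rests on the Lacey--Thiele mass parameter $\dense(f,\mathbb P)$ of \eqref{e:dense}. Lemma~\ref{densitydom} proves the tree-by-tree bound $\size_{\mathsf C}(A[f],T)\lesssim \dense(f,\mathbb P)$: the freezing at $\xi_T$ appears there too (see \eqref{e:error}), but the error is absorbed using the nested structure of $\{\omega_P^{\mathsf b}:P\in T^{\mathsf{lac}}\}$ and a maximal-function estimate for the tree operator $H_T$, not by a comparison with $W$. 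The overlapping part likewise uses the pairwise disjointness of $\{N^{-1}(\omega_P^{\mathsf b}):P\in T^{\mathsf{ov}}\}$ in an essential way (cf.\ \eqref{ovlessthantime}), which your triangle-inequality sketch does not capture. The outer $L^{p,\infty}$ bound then comes from the mass decomposition of Lemma~\ref{l:densitycarl}, and Proposition~\ref{usualforA} follows by specializing to $\mathbb P=\mathbb S^J$, $f\leftarrow f\cic 1_{3J}$, and interpolating.
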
 Proposition \ref{usualforA} is obtained as a consequence of the localized estimate \eqref{e:modifemb1} of Proposition \ref{p:modifemb}. We send to  Section \ref{s:modifemb} for statements and proofs.
Propositions \ref{usual} and \ref{usualforA} may be combined to prove the estimate
\begin{equation}
\label{e:introest}\mathsf C_{\mathbb P}(f_1,f_2)\lesssim_{p}  \|f_1 \|_{p} \|f_2 \|_{p'}, \qquad 2<p<\infty
\end{equation} 
uniformly over all $f_1,f_2\in L^\infty_0(\R)$ and finite   $\mathbb P\subset \mathbb S $.
 In turn, via \eqref{e:modelred}, \eqref{e:introest} entails the $L^p(\R)$-boundedness of \eqref{carleson} in the same range.
 \begin{proof}[Proof of \eqref{e:introest}] Fix $f_1,f_2\in L^\infty_0(\R)$ and a finite $\mathbb P$. Using grid property (ii),   find $  J \in \D$ such that, denoting  $J_j= J+j|J|
$, and setting $\mathbb{P}_j\coloneqq \mathbb{P } \cap \mathbb{S}^{J_j}$, 
there holds \[\mathbb P=\mathbb{P}_{-1} \cup \mathbb{P}_{0} \cup \mathbb{P}_1, \qquad \supp f_1,\supp f_2 \subset 3J_j \quad \forall j=0,\pm1.\] 
    The easy consideration $\size_\infty(F,T)\leq \size_{2,\star,1}(F,T)$ and the definitions tell us that 
  \begin{equation}\label{e:carlholder}
 \begin{split}
  \size_{1} (F_1F_2,T)
  &  \leq\size_2(F_1,T^{\mathsf{lac}})\size_2(F_2,T^{\mathsf{lac}}) +\size_\infty(F_1,T^{\mathsf{ov}})\size_1(F_2,T^{\mathsf{ov}})
	\\ &\leq 2   \size_{2,\star,1}(F_1,T) \size_{\mathsf{C}}(F_2,T).
\end{split}
 \end{equation}  so that a form of \eqref{e:submulti} is verified.
Applying the outer H\"older inequality to $F_1=W[f_1], F_2=A[f_2]$ in the form of \eqref{usualholder} followed by Propositions \ref{usual} and \ref{usualforA} thus leads to \[ \begin{split}
			&\quad \mathsf{C}_{\mathbb{P}_i}(f_1,f_2) \leq
		\left\| F_1F_2\right\|_{\ell^1(\mathbb S^{J_i} )} 
	 \lesssim  |J_i| \left\| W[f_1  ] \right\|_{L^p(J_i,1,\size_{2,\star})} \left\| A[f_2 ] \right\|_{L^{p'}(J_i,1,\size_{\mathsf{C}})} 	\\ &  \lesssim |J_i| \l f_1 \r_{p,3J_i} \l f_2 \r_{p',3J_i}\lesssim  \left\| f_1 \right\|_p \left\| f_2 \right\|_{p'}
	 	\end{split}\] and the proof is completed by the observations that $\mathsf{C}_{\mathbb{P}} = \mathsf{C}_{\mathbb{P}_{-1}}+ \mathsf{C}_{\mathbb{P}_0}+\mathsf{C}_{\mathbb{P}_1}$.
 \end{proof}

\section{Localized embeddings for the modified  wave packet transforms} \label{s:modifemb}
This section contains the statement and proof of the embedding theorems for the modified wave packet transform \eqref{e:wptA}, see Proposition \ref{p:modifemb}. The analysis behind this proposition is  essentially based on a combination of the \emph{tree} and \emph{mass} lemmata from \cite{LT}. We claim no particular originality, but  choose to present a full argument given the additional complications brought by the explicit dependence on $N(\cdot)$ of the wavelets in the  map \eqref{e:wptA}, cf.\ also the definition of the wavelet classes $\Psi(P)$ from \eqref{e:refwpsO}. To handle this dependence, we borrow a continuity estimate idea from the paper \cite{LaceyLi} on Stein's conjecture for the Hilbert transform along vector fields.

\begin{remark} \label{r:formal}
Before we begin, we make the standing assumption that the function $f$  playing the role of the argument in \eqref{e:wptA} belongs to $L^{\infty}_0(\R)$ and that     $ \mathbb{P}$ is a finite subset of the collection of all tiles $ \mathbb{S} =\mathbb{S}_{\mathcal D,\mathcal D'}$. The finiteness assumption in the estimates does not change the scope of our applications, and may in fact be removed via a limiting argument when additional regularity assumptions on $f$ are posed; for instance $f \in \mathcal C^2_0(\R)  $ will suffice.
\end{remark}
 \begin{proposition} \label{p:modifemb} We have
 \begin{equation}
\label{e:modifemb1}
 \left\| A[f] \cic{1}_{\mathbb{P}} \right\|_{L^{p,\infty}\left({J,1,\size}_{\mathsf{C}}\right)} \lesssim [f]_{1,\mathbb{P}} , \qquad 1\leq p\leq \infty
    \end{equation} with uniform constant. In particular the above estimate  yields the control  \begin{equation}
\label{e:modifemb2}\left\| A[f] \cic{1}_{\mathbb{P}} \right\|_{Y^{p,\infty}\left({J,1,\size}_{\mathsf{C}}\right)} \lesssim [f]_{1,\mathbb{P}}, \qquad 1\leq p\leq \infty.
\end{equation}
\end{proposition}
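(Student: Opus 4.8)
The estimate \eqref{e:modifemb1} is the genuine content, and \eqref{e:modifemb2} follows trivially from it since $Y^{p,\infty}$ is the maximum of $L^{p,\infty}$ and $L^\infty$, and the $L^\infty({J,1,\size}_{\mathsf C})$-bound is the $p=\infty$ case of \eqref{e:modifemb1}. So the plan is to prove \eqref{e:modifemb1}. By the interpolation-type reduction built into the outer-$L^p$ formalism, and since $[f]_{1,\mathbb P}$ does not depend on $p$, it suffices to prove the two endpoint estimates: the $L^{\infty}({J,1,\size}_{\mathsf C})$ bound, which is a \emph{single-tree} (size) estimate, and the $L^{1,\infty}({J,1,\size}_{\mathsf C})$ bound, which is a \emph{super-level (mass-type)} estimate; the intermediate $1<p<\infty$ follows by outer-$L^p$ interpolation \cite[Prop.\ 3.5]{DT2015}. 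Throughout one may assume $\mathbb P\subset \mathbb S^J$ and, after the usual localization trick (replacing $f$ by $f\chi_J$ and absorbing the distance factor), that $f$ is essentially supported near $J$, so that $[f]_{1,\mathbb P}\sim \langle f\rangle_{1,3J}$; the polynomial decay factor $\langle \dist(J,\supp f)\rangle^{-2^8}$ implicit in $[f]_{1,\mathbb P}$ is recovered exactly as at the end of the proof of Proposition \ref{sizelemmacompressed}.

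\emph{The tree estimate.} Fix a $1$-tree $T\subset \mathbb P$ with top data $(I_T,\xi_T)$ and split $T=T^{\mathsf{ov}}\cup T^{\mathsf{lac}}$. For the lacunary part we must bound $\size_2(A[f],T^{\mathsf{lac}})$, and by Lemma \ref{lacpartislac} we may assume $T^{\mathsf{lac}}$ is a lacunary tree. Here the key point is that, for $P$ in a lacunary tree, the frequency cutoff $\cic 1_{\omega_P^{\mathsf b}}(N(\cdot))$ and the $\nu$-derivative condition in \eqref{e:refwpsO} let us replace $\psi(\cdot,N(\cdot))\cic1_{\omega_P^{\mathsf b}}(N(\cdot))$ by a \emph{fixed} wave packet in $C\Phi(P)$ up to an error controlled by the $\partial_\nu/\scl(P)$-difference across the interval $\omega_P^{\mathsf b}$, which has length $\ell_{\omega_P}=\scl(P)^{-1}$; this is the Lacey--Li continuity-estimate idea cited in the preamble. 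Consequently the square function $\sum_{P}|I_P|\,|A[f](P)|^2$ is dominated by the square of a semi-discrete Calder\'on--Zygmund operator applied to $f$, plus a rapidly summable error, and the $\ell^2$-orthogonality from disjoint frequency supports gives $\size_2(A[f],T^{\mathsf{lac}})\lesssim \llangle f\rrangle_{1,I_T}\lesssim [f]_{1,\mathbb P}$, exactly as in the lacunary estimates of the previous section. For the overlapping part $T^{\mathsf{ov}}$ we must bound $\size_1(A[f],T^{\mathsf{ov}})=|I_T|^{-1}\sum_{P\in T^{\mathsf{ov}}}|I_P|\,|A[f](P)|$. By Lemma \ref{lacpartislac}(ii) the relevant frequency intervals are, scale by scale, pairwise disjoint and the constraint $N(x)\in\omega_P^{\mathsf b}$ with $\xi_T\in\omega_P$ pins $N(x)$ to one tile per scale; summing the pointwise bound $|A[f](P)|\lesssim \langle|f|\chi_{I_P}^{M'}\rangle_{1,\R}$ (a consequence of the $L^1$-adaptedness in \eqref{e:refwps}) over the at most $O(1)$ tiles per scale and over scales $\scl(P)\le \ell_{I_T}$, the tails telescope and yield $\size_1(A[f],T^{\mathsf{ov}})\lesssim \langle f\rangle_{1,I_T}\lesssim [f]_{1,\mathbb P}$. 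Combining the two gives $\size_{\mathsf C}(A[f],T)\lesssim [f]_{1,\mathbb P}$ for every $T\in{\mathcal T^{J,1}}$, i.e.\ the $L^\infty({J,1,\size}_{\mathsf C})$ bound.

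\emph{The super-level (mass) estimate.} To prove $\|A[f]\cic1_{\mathbb P}\|_{L^{1,\infty}({J,1,\size}_{\mathsf C})}\lesssim [f]_{1,\mathbb P}$ we must show that for every $\tau>0$ the set $A_\tau\coloneqq\{P\in\mathbb P:\ \text{$P$ not covered with $\size_{\mathsf C}\le\tau$}\}$ has outer measure $\mu^{J,1}(A_\tau)\lesssim \tau^{-1}[f]_{1,\mathbb P}$. This is a localized, $N(\cdot)$-dependent reformulation of the mass lemma of \cite{LT}. One runs the standard selection of \emph{maximal} trees: iteratively pick trees $T$ with top data $(I_T,\xi_T)$ maximizing (up to a factor $2$) either the $\size_1$ or the $\size_{2,\star}$ functional above the threshold $\tau$, remove them, and continue. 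By the tree estimate each selected tree carries $\size_{\mathsf C}> \tau/2$, and the crucial counting step is that the selected spatial tops, weighted by $|I_T|$, satisfy $\sum_T|I_T|\lesssim \tau^{-1}\|f\chi_{3J}\|_1 = \tau^{-1}|J|\langle f\rangle_{1,3J}$. For the $\size_1$ (overlapping) selection this is a Fefferman-style argument: the selected $I_T$ with a common $\xi_T$-compatibility are, after an $O(1)$ splitting, pairwise disjoint because of the frequency-separation encoded in the tiles and the localization $N(x)\in\omega_P^{\mathsf b}$, and each contributes $|I_T|\,\tau\lesssim\int_{I_T}\M f$; for the $\size_{2,\star}$ (lacunary/$L^2$) selection one uses the Bessel/orthogonality argument: the selected lacunary trees have $\ell^2$-almost-disjoint wave packets, so $\tau^2\sum_T|I_T|\lesssim \sum_T\|H_T f\|_2^2\lesssim \|f\chi_{3J}\|_2^2$, and then one converts the $L^2$ bound into the $L^1$ statement by a further level-set decomposition of $f=f\cic1_{\{|f|\le\lambda\}}+f\cic1_{\{|f|>\lambda\}}$ with $\lambda\sim\tau$, exactly as in the classical passage from the $L^2$ mass estimate to the weak-$(1,1)$ form. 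The dependence on $N(\cdot)$ enters only through the continuity estimate already used in the tree bound, so no new idea is needed there.

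\emph{Main obstacle.} The only genuinely delicate point is controlling the $N(\cdot)$-dependence of the wavelets $\psi(\cdot,N(\cdot))\cic1_{\omega_P^{\mathsf b}}(N(\cdot))$ uniformly in the measurable function $N$: one cannot treat it as a fixed wave packet, and the naive bound loses summability over scales. The fix, as indicated, is the Lacey--Li type continuity estimate: the defining condition $[\partial_\nu/\scl(P)]^a\phi(\cdot,\nu)\in\Phi^M(P)$ for $a=0,1$ in \eqref{e:refwpsO} says precisely that $\nu\mapsto\phi(\cdot,\nu)$ varies slowly on the scale $\ell_{\omega_P}=\scl(P)^{-1}$ of the admissible window $\omega_P^{\mathsf b}$, so replacing $N(x)$ by the center of $\omega_P^{\mathsf b}$ costs a factor that is summable in the tree/scale parameters. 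Making this quantitative — and threading it through both the $\size_2$ orthogonality estimate and the mass-counting argument without destroying the rapid decay — is where the bulk of the (routine but careful) work lies; everything else is a localized transcription of the tree and mass lemmata of \cite{LT}.
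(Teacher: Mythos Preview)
Your tree estimate (the $L^\infty$ bound) is essentially the paper's Lemma \ref{densitydom}: split into $T^{\mathsf{ov}}$ and $T^{\mathsf{lac}}$, use the Lacey--Li continuity estimate to freeze $\nu=N(x)$ at $\xi_T$ on the lacunary part, and exploit the disjointness of $\{\omega_P^{\mathsf b}:P\in T^{\mathsf{ov}}\}$ on the overlapping part. The paper phrases the right-hand side as the auxiliary density $\dense(f,\mathbb P)$ rather than $[f]_{1,\mathbb P}$, but since $\dense(f,\mathbb P)\lesssim [f]_{1,\mathbb P}$ this is only a cosmetic difference.

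The gap is in your super-level ($L^{1,\infty}$) argument. You propose to handle the $\size_2$-lacunary selection by a Bessel/almost-orthogonality bound $\tau^2\sum_T|I_T|\lesssim\|f\|_2^2$ followed by a level-set truncation to pass from $L^2$ to $L^1$. This is the \emph{energy} mechanism used for the standard wave packet map $W[f]$; it does not transfer to $A[f]$. After the continuity replacement the effective wave packets are $\phi_P^{(T)}=\psi_P(\cdot,\xi_T)$, which depend on the top frequency of the tree doing the selecting, and, more importantly, the coefficients $A[f](P)$ still carry the nonlinear cutoff $\cic{1}_{\omega_P^{\mathsf b}}(N(\cdot))$. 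There is no $TT^*$/Bessel inequality available that sums the squares of these quantities across a family of selected trees and returns $\|f\|_2^2$. Even granting such an inequality, the level-set trick fails: the trees were selected because $\size_{\mathsf C}(A[f],T)>\tau$, and this need not persist when $f$ is replaced by $f\cic{1}_{\{|f|\le\lambda\}}$, so the counting conclusion does not follow.

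The paper avoids this entirely. It never runs a size-based selection for $A[f]$. Instead it introduces the density
\[
\dense(f,\mathbb P)=\sup_{P\in\mathbb P}\ \sup_{P'\gtrsim P}\ \big\llangle f\,\cic{1}_{N^{-1}(\omega_{P'}^{\mathsf p(1)})}\big\rrangle_{1,I_{P'}},
\]
proves the tree bound in the stronger form $\size_{\mathsf C}(A[f],T)\lesssim\dense(f,T)$ (Lemma \ref{densitydom}), and then runs a \emph{density}-based selection (Lemma \ref{l:densitycarl}): pick the maximal tiles with density $>\delta$ and show, by a pure $L^1$ covering/incomparability argument, that their tops satisfy $\sum_{T}|I_T|\lesssim\delta^{-1}|J|\inf_J\mathrm M f$. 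No orthogonality is used or needed; the cutoff $\cic{1}_{N^{-1}(\omega_{P'}^{\mathsf p(1)})}$ provides disjointness of the supports on the spatial side directly. The combination of the two lemmas gives the rearrangement bound $(A[f]\cic 1_{\mathbb P})^{*,\size_{\mathsf C}}(t)\lesssim [f]_{1,\mathbb P}/\max\{1,t\}$, from which all $L^{p,\infty}$ norms follow uniformly in $p$ without interpolation. Replace your Bessel step by this density selection and the proof goes through.
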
 In Proposition \ref{p:modifemb}, as anticipated in Section \ref{s:outer}, the tree parameter   $\kappa$ equals 1 and all trees referred to below are $1$-trees, without further explicit mention.
The  proposition is proved  by combining the next two lemmata, involving the auxiliary quantity
\begin{equation}
\label{e:dense}
\dense(f,\mathbb{P})\coloneqq \sup_{P \in \mathbb{P}} \sup_{\substack{P \lesssim  P' \\ P' \in \mathbb{S}}}  \llangle  f \cic{1}_{N^{-1}( \omega_{P'}^{\mathsf{p}(1)})  }\rrangle_{1,I_{P'}}   
\end{equation}
defined  e.g. for $f\in L^\infty_0(\R)$ and $\mathbb P\subset \mathbb S$.
 The order relation in \eqref{e:dense} is a modification of the Fefferman ordering defined by
\begin{equation}
\label{e:feffor}
P\lesssim_\kappa  P' \iff I_P \subset I_{P'}, \;  \omega_{P'}^{\mathsf{p}(\kappa)} \subset   \omega_{P}^{\mathsf{p}(\kappa)}.
\end{equation}
As we use \eqref{e:feffor} with $\kappa=1$ throughout this section, we write $\lesssim$ instead of $\lesssim_1 $.
\begin{remark} \label{r:densecont} A moment's thought yields $\dense(f,\mathbb{P})\lesssim [f]_{1,\mathbb P}$ uniformly   over $\mathbb P\subset \mathbb S$. 
\end{remark}
\begin{lemma}
	\label{densitydom}$\displaystyle \left\| A[f] \cic{1}_{\mathbb{P}} \right\|_{L^{\infty}({J,1,\size}_{\mathsf{C}})} \lesssim \dense(f,\mathbb{P}).$
\end{lemma}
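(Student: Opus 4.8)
The plan is to bound the size $\size_{\mathsf{C}}(A[f],T)$ for an arbitrary $1$-tree $T\subset \mathbb P$ with top data $(I_T,\xi_T)$ by $\dense(f,\mathbb P)$, uniformly in $T$; this is precisely the statement $\|A[f]\cic 1_{\mathbb P}\|_{L^\infty(J,1,\size_{\mathsf C})}\lesssim \dense(f,\mathbb P)$. By definition \eqref{e:sized}, $\size_{\mathsf{C}}(A[f],T)=\size_2(A[f],T^{\mathsf{lac}})+\size_1(A[f],T^{\mathsf{ov}})$, so the two pieces must be treated separately. For the overlapping part $T^{\mathsf{ov}}$, every $P\in T^{\mathsf{ov}}$ has $\xi_T\in\omega_P$, hence all the $\omega_P$ (and a fortiori $\omega_P^{\mathsf b}$, adjacent to $\omega_P$) are nested and contain a common point; I would exploit the pointwise restriction $\cic 1_{\omega_P^{\mathsf b}}(N(\cdot))$ together with the $L^1$-adaptation of $\psi\in\Psi^M(P)$ to bound $A[f](P)\lesssim \llangle f\cic 1_{N^{-1}(\omega_P^{\mathsf p(1)})}\rrangle_{1,I_P}$, via the trivial $|\langle g,\psi\rangle|\le \|g\chi_{I_P}^{-1}\|_\infty\|\psi\chi_{I_P}\|_1/|I_P|\cdot|I_P|$-type estimate, and then observe this is $\le\dense(f,\mathbb P)$ since $P\lesssim P$. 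Summing $|I_P|\,A[f](P)^?$... more precisely, $\size_1(A[f],T^{\mathsf{ov}})=|I_T|^{-1}\sum_{P\in T^{\mathsf{ov}}}|I_P|\,A[f](P)$; since on $T^{\mathsf{ov}}$ the frequency supports $\omega_P$ are totally ordered, at each spatial scale there is $O(1)$ such $P$, and the nested frequency structure lets one sum the $\llangle\cdot\rrangle_{1,I_P}$ geometrically against $\dense(f,\mathbb P)$ using the maximal-function/tail bounds — this is the classical \emph{overlapping tree} estimate from \cite{LT}, with the extra $N(\cdot)$-restriction only shrinking the relevant function.

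The lacunary part is the substantive one: I need $\size_2(A[f],T^{\mathsf{lac}})=\big(|I_T|^{-1}\sum_{P\in T^{\mathsf{lac}}}|I_P|\,A[f](P)^2\big)^{1/2}\lesssim\dense(f,\mathbb P)$. The natural route is almost-orthogonality: linearize the sup defining $A[f](P)$, picking $\psi_P\in\Psi^M(P)$, and dualize $\sum_{P}|I_P| a_P\langle f,\psi_P(\cdot,N(\cdot))\cic 1_{\omega_P^{\mathsf b}}(N(\cdot))\rangle$ against $\|(a_P)\|_{\ell^2(|I_P|)}$. The obstruction — and, I expect, the main obstacle — is that the wavelets $\psi_P(x,N(x))$ depend measurably on $x$ through $N(x)$, so the usual $TT^*$ computation producing off-diagonal decay $|\langle\psi_P(\cdot,N(\cdot)),\psi_{P'}(\cdot,N(\cdot))\rangle|\lesssim|I_P|^{-1}\langle\cdots\rangle^{-M}$ between different lacunary tiles is not immediate. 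This is exactly where the continuity estimate from \cite{LaceyLi} enters: the defining property of $\Psi^M(P)$ in \eqref{e:refwpsO}, namely that $[\partial_\nu/\scl(P)]^a\phi(\cdot,\nu)\in\Phi^M(P)$ for $a=0,1$, gives a Lipschitz-in-$\nu$ control at scale $\ell_{\omega_P}$, so that replacing $N(x)$ by a frequency representative of $\omega_P$ costs an error controlled by $|N(x)-c(\omega_P)|/\ell_{\omega_P}$, which is $O(1)$ on the support $\cic 1_{\omega_P^{\mathsf b}}(N(x))$. Combined with the lacunary disjointness \eqref{e:lacdef} of the $\omega_P$, this recovers enough orthogonality to run the standard argument: one gets $A[f](P)\lesssim\inf_{I_P}\M_1(f\cic 1_{N^{-1}(\omega_P^{\mathsf p(1)})})$-type control and an $\ell^2$ Bessel/Calder\'on--Zygmund bound.

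Concretely I would proceed as follows. First, establish the pointwise bound $A[f](P)\lesssim \llangle f\cic 1_{N^{-1}(\omega_P^{\mathsf p(1)})}\rrangle_{1,I_P}$ for every $P$, which is immediate from the $L^1$-adaptation of $\Psi^M(P)$ and the frequency restriction; note $\omega_P^{\mathsf b}\subset\omega_P^{\mathsf p(1)}$ so the set $N^{-1}(\omega_P^{\mathsf b})$ on which the wavelet is supported lies inside $N^{-1}(\omega_P^{\mathsf p(1)})$, and that this quantity is $\le\dense(f,\mathbb P)$ by taking $P'=P$ in \eqref{e:dense}. This already handles $\size_\infty$, hence (after the tree-summation) $\size_1$ on $T^{\mathsf{ov}}$ by the classical overlapping estimate, exploiting that the $\omega_P^{\mathsf p(1)}$ are nested so the sets $N^{-1}(\omega_P^{\mathsf p(1)})$ are nested too and the tails telescope. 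Second, for $T^{\mathsf{lac}}$, decompose $T^{\mathsf{lac}}$ into $O(1)$ genuinely lacunary trees (Lemma \ref{lacpartislac}(i) with $\kappa=1$ gives this directly), then on each such tree run the $\ell^2$-almost-orthogonality argument using the $\Psi^M(P)$-continuity in $\nu$ to reduce to honest wave packets $\phi_P\in\Phi^M(P)$ modulo acceptable errors, and invoke the Calder\'on--Zygmund/Bessel machinery exactly as in the lacunary tree discussion preceding Proposition \ref{sizelemmacompressed}, landing on $\size_2(A[f],T^{\mathsf{lac}})\lesssim\sup_{P}\llangle f\cic 1_{N^{-1}(\omega_P^{\mathsf p(1)})}\rrangle_{1,I_P}\le\dense(f,\mathbb P)$. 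Assembling the two estimates gives the claim. The only real care needed, beyond bookkeeping, is the $N(\cdot)$-continuity step, which I would isolate as a short sublemma.
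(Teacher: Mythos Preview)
Your high-level strategy matches the paper's, but two of your key structural claims are wrong in ways that would break the argument as written.

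\textbf{The overlapping part.} You assert that for $P\in T^{\mathsf{ov}}$ the siblings $\omega_P^{\mathsf b}$ are nested. After splitting into type~1 and type~2 trees (which the paper does and you omit), the $\omega_P$ for $P\in T^{\mathsf{ov}}$ are indeed nested around $\xi_T$, but precisely for that reason the siblings $\omega_P^{\mathsf b}$ are pairwise \emph{disjoint}: each $\omega_P^{\mathsf p(1)}$ at a finer scale sits inside $\omega_{P'}$ (not $\omega_{P'}^{\mathsf b}$) at a coarser scale. So your ``nested sets, tails telescope'' mechanism does not exist here. The paper instead introduces the Calder\'on--Zygmund stopping collection $\mathcal G$ of maximal $G\in\mathcal D$ with $3G\not\supseteq I_P$ for all $P\in T$; on each $G$ it separates small-scale tiles (handled as decay tails) from large-scale ones, and for the latter the disjointness of the $\omega_P^{\mathsf b}$ inside $\omega_{P'(G)}^{\mathsf p(1)}$ collapses the sum to a single local average $\llangle f\cic 1_{N^{-1}(\omega_{P'(G)}^{\mathsf p(1)})}\rrangle_{1,I_{P'(G)}}\le\dense(f,\mathbb P)$, which is then summed over the disjoint $G\subset 9I_T$. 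Without this spatial decomposition (or an equivalent device) there is no way to upgrade the trivial pointwise bound $A[f](P)\lesssim\dense$ to a $\size_1$ bound, since $\sum_{P\in T^{\mathsf{ov}}}|I_P|$ is unbounded.

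\textbf{The lacunary continuity step.} Your proposed replacement is $\psi_P(\cdot,N(\cdot))\to\psi_P(\cdot,c(\omega_P))$, with error $|N(x)-c(\omega_P)|/\ell_{\omega_P}=O(1)$ on the support. That bound is correct but useless: it is $\sim 1$ at \emph{every} scale, so the error term is of the same order as the original sum and nothing has been gained. The paper instead replaces by $\phi_P:=\psi_P(\cdot,\xi_T)$. For $P\in T^{\mathsf{lac}}$ (type~2) one has $\xi_T\in\omega_P^{\mathsf b}$, so the $\omega_P^{\mathsf b}$ are nested around $\xi_T$ (the opposite of the overlapping case), and one records the minimal active scale $\delta(x):=\inf\{\ell_{\omega_P}:N(x)\in\omega_P^{\mathsf b},\,P\in T^{\mathsf{lac}}\}$, so that $|N(x)-\xi_T|\le\delta(x)$ \emph{uniformly in $P$}. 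The Lipschitz error at tile $P$ is then $\lesssim\chi_{I_P}(x)\,\delta(x)/\ell_{\omega_P}$, which is geometrically summable over scales $\ell_{\omega_P}\ge\delta(x)$ to $O(1)$; this is what makes the error term tractable (again via the decomposition $\mathcal G$). After the replacement, the main term is handled through the tree operator $H_T f=\sum_{P\in T^{\mathsf{lac}}}|I_P|A[f](P)\phi_P$ and a pointwise maximal truncation bound $|H_{T,\alpha,\beta}f|\lesssim\mathrm M[H_Tf]$, not a $TT^*$ computation.

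In short: you have the right outline, but you have the nested/disjoint dichotomy for $\omega_P^{\mathsf b}$ exactly reversed between $T^{\mathsf{ov}}$ and $T^{\mathsf{lac}}$, you are missing the spatial stopping grid $\mathcal G$ which is the backbone of both estimates, and your Lipschitz replacement must be to the \emph{common} frequency $\xi_T$ rather than the tile-dependent $c(\omega_P)$ in order for the errors to sum.
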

 \begin{lemma} \label{l:densitycarl} Let $\mathbb P \subseteq \mathbb{S}$    and $\delta>0$. There exists a decomposition
		$
		\mathbb P =  \mathbb P_- \cup \bigcup_{T\in \mathcal F} T,
	$
		where $\dense(f,\mathbb P_-)\leq \delta$,  each $T$ is a tree with top interval $I_T$, and the forest $\mathcal F=\mathcal F(\delta,f)$ satisfies  		\begin{equation}
			\label{e:topsstop}\frac{ \delta }{|J|}\sum_{\substack{T\in \F \\ I_T \subset J}} |I_T |\lesssim   \inf_J \mathrm{M}_1 f, \qquad \forall J \in \mathcal D.
		\end{equation}
	\end{lemma}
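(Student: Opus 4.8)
\textbf{Proof plan for Lemma \ref{l:densitycarl}.} The plan is to run a standard stopping-time selection on the Fefferman order $\lesssim$ (with $\kappa=1$), extracting maximal tiles at which the density-type quantity $\dense$ is large, and then to reorganize the selected tiles into trees whose tops satisfy the Carleson packing condition \eqref{e:topsstop}.

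First I would define the stopping tiles. For the given $\delta>0$, let $\mathbb P_{\mathrm{stop}}$ be the collection of tiles $P'\in\mathbb S$ that are \emph{maximal} with respect to $\lesssim$ among those for which there exists $P\in\mathbb P$ with $P\lesssim P'$ and $\llangle f\cic{1}_{N^{-1}(\omega_{P'}^{\mathsf{p}(1)})}\rrangle_{1,I_{P'}}>\delta$. (Maximality is well defined because, for $P$ fixed, the set of $P'$ with $P\lesssim P'$ is a chain in scale; and the finiteness assumptions on $\mathbb P$ via Remark \ref{r:formal}, or a limiting argument, make the selection legitimate.) To each selected $P'\in\mathbb P_{\mathrm{stop}}$ associate the tree $T(P')=\{P\in\mathbb P: P\lesssim P'\}$, which is a $1$-tree with top data $(I_{P'},c(\omega_{P'}))$ since $P\lesssim P'$ forces $I_P\subset I_{P'}$ and $c(\omega_{P'})\in\omega_{P'}^{\mathsf{p}(1)}\subset\omega_P^{\mathsf{p}(1)}$. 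Set $\mathbb P_-\coloneqq\mathbb P\setminus\bigcup_{P'\in\mathbb P_{\mathrm{stop}}}T(P')$ and $\mathcal F\coloneqq\{T(P'):P'\in\mathbb P_{\mathrm{stop}}\}$, so $\mathbb P=\mathbb P_-\cup\bigcup_{T\in\mathcal F}T$ by construction. That $\dense(f,\mathbb P_-)\le\delta$ is then immediate: if some $P\in\mathbb P_-$ had a witness $P'$ with $P\lesssim P'$ and large averaged tail, then $P$ would lie in $T(P'')$ for the $\lesssim$-maximal such $P''$, contradicting $P\in\mathbb P_-$.

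The substantive point is the packing bound \eqref{e:topsstop}. Fix $J\in\mathcal D$; it suffices to control $\sum\{|I_T|: T\in\mathcal F, I_T\subset J\}=\sum\{|I_{P'}|: P'\in\mathbb P_{\mathrm{stop}}, I_{P'}\subset J\}$. The key observation is a disjointness-type estimate coming from the definition: each selected $P'$ comes with a frequency interval $\omega_{P'}^{\mathsf{p}(1)}$ and contributes a set $N^{-1}(\omega_{P'}^{\mathsf{p}(1)})$ on which $f$ has average (with Poisson tail) exceeding $\delta$ relative to $I_{P'}$, i.e.\ $\delta|I_{P'}|\lesssim\int_{I_{P'}}|f|\chi_{I_{P'}}\cic{1}_{N^{-1}(\omega_{P'}^{\mathsf{p}(1)})}$ up to the tail correction. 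The standard mechanism (this is exactly the ``mass lemma'' mechanism of \cite{LT}, localized) is that for fixed spatial scale the frequency intervals $\omega_{P'}^{\mathsf{p}(1)}$ attached to stopping tiles with comparable $I_{P'}$ have bounded overlap, because $\lesssim$-maximality prevents one selected $P'$ from sitting Fefferman-below another. Summing the lower bounds $\delta|I_{P'}|\lesssim\int_{N^{-1}(\omega_{P'}^{\mathsf{p}(1)})\cap 3I_{P'}}|f|$ over $P'$ with $I_{P'}\subset J$, using the bounded overlap both in space (nested dyadic $I_{P'}\subset J$, so Carleson packing) and in frequency (the preimages $N^{-1}(\cdot)$ overlap boundedly because the $\omega$'s do), collapses the right-hand side to $\lesssim\int_{3J}|f|\lesssim|J|\inf_J\mathrm{M}_1 f$, which is \eqref{e:topsstop}. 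The Poisson tails $\chi_{I_{P'}}$ are absorbed by splitting $f=f\cic{1}_{3J}+\sum_k f\cic{1}_{3^{k+1}J\setminus 3^kJ}$ and using the decay of $\chi_{I_{P'}}^{2^9}$ against the dyadic annuli, each annular piece again controlled by $\inf_J\mathrm{M}_1 f$ with geometric gain.

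\textbf{Expected main obstacle.} The delicate point is making the ``bounded overlap in frequency after pulling back by $N$'' rigorous. Unlike the classical Lacey--Thiele mass lemma, where the relevant sets are honest subsets of $\R$ indexed by disjoint frequency intervals, here the relevant sets are $N^{-1}(\omega_{P'}^{\mathsf{p}(1)})$, and one must argue that whenever two selected tiles $P_1',P_2'$ have $I_{P_1'}\cap I_{P_2'}\ne\varnothing$ and comparable scales, the frequency intervals $\omega_{P_1'}^{\mathsf{p}(1)},\omega_{P_2'}^{\mathsf{p}(1)}$ are either nested or disjoint (grid property) and $\lesssim$-maximality rules out the nested-and-selected case, forcing disjointness and hence disjoint preimages. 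Carefully bookkeeping the factor-of-comparability in scales (so that ``$\omega^{\mathsf{p}(1)}$'' at different but comparable scales still have bounded overlap) and confirming that the spatial Carleson packing over $I_{P'}\subset J$ interacts correctly with this frequency disjointness is where the argument must be executed with care; everything else is routine dyadic bookkeeping and Poisson-tail absorption.
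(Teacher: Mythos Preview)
Your selection of stopping tiles and the formation of the trees $T(P')$ is correct, and the reduction of \eqref{e:topsstop} to a packing estimate for a family of pairwise $\lesssim$-incomparable tiles $P'$ with $\llangle f\cic{1}_{N^{-1}(\omega_{P'}^{\mathsf p(1)})}\rrangle_{1,I_{P'}}>\delta$ is the right move; this is also what the paper does. The frequency-side mechanism you flag as the main obstacle is in fact the easy part: incomparability and the grid property force the \emph{sharp} sets $I_{P'}\cap N^{-1}(\omega_{P'}^{\mathsf p(1)})$ to be pairwise disjoint, and if the density were an untailed average the sum would collapse immediately to $\int_J|f|$.

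The genuine gap is in your handling of the Poisson tails, which you describe as routine. Neither replacing $I_{P'}$ by $3I_{P'}$ nor decomposing $f$ into annuli of $J$ works. Take $J=[0,1]$ and pairwise incomparable tiles $P_n'$ with $I_{P_n'}=[2^{-n},2^{-n+1}]$ and all $\omega_{P_n'}^{\mathsf p(1)}$ containing a fixed value $N(0)$; then $0\in 3I_{P_n'}\cap N^{-1}(\omega_{P_n'}^{\mathsf p(1)})$ for every $n$, and $\sum_n\chi_{I_{P_n'}}^{M}(0)\gtrsim\sum_n\langle 3/2\rangle^{-M}$ diverges. This obstruction lives entirely inside $3J$, so splitting $f$ along annuli of $J$ gives no gain on the dominant piece. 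The step ``collapses the right-hand side to $\lesssim\int_{3J}|f|$'' is therefore false as written.

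The mass-lemma mechanism of \cite{LT} that you cite actually proceeds differently, and the paper follows it: for each incomparable $P'$ choose the minimal $k=k_{P'}\ge 0$ with $\int_{2^{k}I_{P'}\cap N^{-1}(\omega_{P'}^{\mathsf p(1)})}|f|\ge 2^{6k}\delta|I_{P'}|$ (such $k$ exists by pigeonholing the tail $\chi_{I_{P'}}^{2^{9}}$), stratify by the value of $k$, and at each level run a Vitali-type selection on the \emph{enlarged} rectangles $2^{k}I_{P'}\times\omega_{P'}^{\mathsf p(1)}$. The selected sets $2^{k}I_{P^\star}\cap N^{-1}(\omega_{P^\star}^{\mathsf p(1)})$ are then pairwise disjoint and contained in $2^{k+3}J$, and the $2^{6k}$ gain beats the $2^{k}$ enlargement with a summable $2^{-4k}$ left over. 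The essential point you are missing is that the annulus index must be attached to each tile, not to $J$.
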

The proofs of Lemmata \ref{densitydom} and \ref{l:densitycarl}   are respectively postponed to Subsections \ref{pf:densitydom} and \ref{pf:densitycarl}. We now show how a combination of these yields Proposition \ref{p:modifemb}. Fix $J \in\mathcal D$, $\mathbb P\subset \mathbb S$ .
The bound \eqref{e:modifemb1} is an immediate consequence of 
\begin{equation}
\label{e:modifemb3}
\sup_{t>0}\max\{1,t\} \left(A[f] \cic{1}_{\mathbb{P}}\right)^{*,\size_{\mathsf{C}}}(t) \leq  C[f]_{1,\mathbb{P}}\end{equation}
where $C$ is an absolute constant explicitly computed below and $\mathbb P\subset \mathbb S^J$. The range $t\leq 1$ of  estimate \eqref{e:modifemb3} is readily obtained by combining Remark \ref{r:densecont} with the conclusion of Lemma \ref{densitydom} and choosing $C$ to be larger than the product of the respective absolute implicit constants. Now, notice that the right hand side of  \eqref{e:topsstop} is also controlled by $[f]_{1,\mathbb P}$. 
Applying Lemma \ref{l:densitycarl}  to $  \mathbb{P} $ with the choice 
  $ {\delta=C \frac{[f]_{1,\mathbb{P}}}{t}} $, provided $C$ is larger than twice the implicit constant in \eqref{e:topsstop} yields  \eqref{e:modifemb3} in the range $t\geq 1.$

\subsection{Proof of Lemma \ref{densitydom}} \label{pf:densitydom} The proof of the Lemma consists in showing that   \begin{equation}
	\label{e:pfLemma431}
	\size_{\mathsf{C}}(A[f] ,T) = \frac{1}{|I_T|} \sum_{P\in T^{\mathsf{ov}}} |I_P| A[f](P) + \left(\frac{1}{|I_T|} \sum_{P\in T^{\mathsf{lac}}} |I_P| A[f](P)^2 \right)^{\frac12}  \lesssim \dense(f,\mathbb{P}) 
\end{equation}
whenever $ T \in \mathcal T^{J,\star}$  is a  tree with $T \subset \mathbb{P} $. For any such tree,  we introduce the support intervals
\[
\Omega^{\mathsf{b}}(T)\coloneqq\{ \omega_P^{\mathsf{b}}: P \in  T  \}.\]
  We may assume, by splitting, that $T$ is a type 2 tree, which means that $\omega_P$ is the right child of its dyadic parent $\omega_P^{\mathsf{p}(1)}$ for all $P\in T$.
 Lemma \ref{lacpartislac} thus tells us   that the collection $\Omega^{\mathsf{b}}(T^{\mathsf{ov}})$ consists of   pairwise disjoint intervals, while $T^{\mathsf{lac}}$ is a lacunary tree, so that in particular $\Omega^{\mathsf{b}}(T^{\mathsf{lac}})$ is a nested collection of intervals containing $\xi_T$. This follows immediately by combining 
\[
\xi_T\in \omega_P^{\mathsf{p}(1)} \; \forall P \in T, \qquad   \omega_P\neq \omega_{P'}\implies \omega_P\cap \omega_{P'}= \varnothing\quad  \forall P,P'\in  T^{\mathsf{lac}} .\]
Accordingly, the quantity $\delta(x) \coloneqq \inf \big \{\ell_{\omega_P}: N(x)\in \omega_P^{\mathsf {b}}, P\in T^{\mathsf{lac}} \big\}$
records the minimal active frequency scale of $T^{\mathsf{lac}}$ at each $N(x)\in \R$ and satisfies
\begin{equation}
\label{e:minsc}
|N(x)-\xi_T| \leq \delta(x), \qquad x\in \R.
\end{equation}

  In estimating both contributions, a key role is played by the collection $ \mathcal{G} $ of maximal elements  in $ \{  G\in \D : 3G \not \supseteq I_P \,\forall P \in T   \} $.
 Accordingly, for  $G\in \mathcal G$, $\mathsf{j}\in \{\mathsf{ov}, \mathsf{lac}\}$ decompose
\[
T^{\mathsf{j}}=T^{\mathsf{j},+}_G \cup T^{\mathsf{j},-}_G , \qquad T^{\mathsf{j},+}_G=  \{P\in T^{\mathsf{j}}:  \scl(P) > \ell_G\}, \qquad T^{\mathsf{j},-}_G= \{P\in T^{\mathsf{j}}:  \scl(P) \leq  \ell_G\}.
\]
We begin to estimate the $T^{\mathsf{ov}}$ term in \eqref{e:pfLemma431}. Using the  definition and the fact that $\mathcal {G}$ is a partition of $\R$ leads to
\begin{equation}
	\label{e:basicpf1}
	\begin{split}
		&   \sum_{P \in T^{\mathsf{ov}}}|I_P| A[f](P)\leq  2\sum_{*\in \{+,-\}} \sum_{G \in \mathcal{G}}   \sum_{ P \in T^{\mathsf{ov,*}}_G  }  |I_P|  \langle f, \varphi_P
		\cic{1}_{G\cap N^{-1}( \omega_{P}^{\mathsf{b}} ) }  \rangle, \qquad \varphi_P\coloneqq \psi_P(\cdot,N(\cdot) )    
\end{split}   \end{equation}
for suitable  $ \psi_P \in \Psi(P)   $. Note that $\varphi_P$ are not standard wavelets as they carry the dependence on the measurable function $N$ from the second argument of $\psi_P$.  The basic estimate
\begin{equation}
	\label{e:basicpf2}
	|\langle f, \varphi_P
		\cic{1}_{G\cap N^{-1}( \omega_{P}^{\mathsf{b}} ) }  \rangle| \lesssim \chi_{I_P}^{10}(c_G)\dense(f,\mathbb P)
\end{equation}
reveals that the $*=-$ sum in \eqref{e:basicpf1} is a tail term. Indeed, also relying on the defining property of $\mathcal {G}$ for the first estimate, and later on \eqref{e:spectree},  
\begin{equation}
	\label{e:taildense}
	\begin{split} 
		&\quad \sum_{G\in \mathcal G}
		\sum_{  P \in T^{\mathsf{ov},-}  } |I_P|\langle f, \varphi_P
		\cic{1}_{G\cap N^{-1}( \omega_{P}^{\mathsf{b}} ) }  \rangle 
		\lesssim  
		\dense(f,\mathbb P)\sum_{G\in \mathcal G} \sum_{k\geq 0} \sum_{\substack{ P \in T^{\mathsf{ov}} \\ |I_P| = 2^{-k}|G|, I_P \cap G=\varnothing  }} |I_P| \chi_{I_P}^{10}(c_G)
		\\ & \lesssim \dense(f,\mathbb P)\sum_{G\in \mathcal G}  |G| \chi_{I_T}^{9}(c_G) \lesssim  \dense(f,\mathbb P)\int  \chi_{I_T}^{9} \lesssim \dense(f,\mathbb P) |I_T|
		\end{split}\end{equation}
which is compliant with \eqref{e:pfLemma431}.
 The $*=+$ term is estimated as follows. First, note that $T^{\mathsf{ov},+}(G)=\varnothing$ unless $G\subset 9I_T$ and there exists $P(G)\in T $ with \[
\scl(P(G))=2\ell_G, \qquad \dist(G,I_{P(G)})\leq \scl(P(G)) .\] Let $P'(G)\in \mathbb S_{\mathcal D,\mathcal D'}$ be the unique tile with $I_{P'(G)}=I_{P(G)}$ and $\xi_T\in \omega_{P'(G)}$. As  the intervals $\Omega^{\mathsf{b}} (T^{\mathsf{ov},+}(G))$ are pairwise disjoint and contained in $  {\omega_{P'(G)}^{\mathsf{p}({1})}}$,
\begin{equation}\label{ovlessthantime}\begin{split}
		&\quad \sum_{G \in \mathcal{G}} \sum_{ P \in T^{\mathsf{ov},+}_G }  |I_P| \langle f, \varphi_P
		\cic{1}_{G\cap N^{-1}( \omega_{P}^{\mathsf{b}} ) }  \rangle \lesssim \sum_{\substack{G \in \mathcal{G}\\  G \subset 9I_T}} |G|   \llangle  f \cic{1}_{N^{-1}( {\omega_{P'(G)}^{\mathsf{p}(1)}})  }\rrangle_{1,I_{P'(G)}} \\ &\lesssim \sum_{\substack{G \in \mathcal{G}\\ G \subset 9I_T}} |G| \dense(f,\{P(G)\})   \lesssim  \dense(f,\mathbb P) |I_T|,
	\end{split}
\end{equation}
which also complies with \eqref{e:pfLemma431}. The $\mathsf{ov}$ term in \eqref{e:basicpf1} is thus fully handled.

We move onto the $\mathsf{lac}$ term in \eqref{e:basicpf1}.  
 With the same notation of the  $ T^{\mathsf{ov}} $ the term, we estimate \begin{equation}\label{eqlac}  \begin{split}
		& \sum_{P \in T^{\mathsf{lac}}} A[f](P)^2 |I_P| \leq 2\sum_{*\in \{+,-\}} \sum_{G \in \mathcal{G}}   \sum_{ P \in T^{\mathsf{ov,*}}_G  }     A[f](P)   \langle f, \varphi_P
		\cic{1}_{G\cap N^{-1}( \omega_{P}^{\mathsf{b}} ) }  \rangle |I_P|.
	\end{split}
\end{equation}
The $  *=- $ sum in \eqref{eqlac} is handled  along the lines of \eqref{e:taildense}, with an additional application of \eqref{e:basicpf2}: we omit the details. The rest of the analysis deals with the $*=+$  summand in \eqref{eqlac}.
The explicit dependence of $\varphi_P=\psi_P(\cdot, N(\cdot))$ on $N(\cdot)$ prohibits us to use orthogonality methods directly. This is obviated by replacing  $\varphi_P$ with the standard wavelets 
\[
\phi_P\coloneqq \psi_P(\cdot, \xi_T)\in \Phi(P), \qquad P \in T^{\mathsf{lac}}.
\] 
Setting $\zeta_P\coloneqq|I_P|[\varphi_P -\phi_P]=|I_P|[\psi_P(\cdot, N(\cdot))-\phi_P],   P \in T^{\mathsf{lac}}$,
th error term created  by the replacement is  
\begin{equation}
\label{e:error}\begin{split} & \quad
\sum_{G \in \mathcal{G}, G \subset 9I_T} \sum_{\substack{P \in T_G^{\mathsf{lac,+}}}}  A[f](P) \left|\langle f,
		\zeta_P\cic{1}_{G\cap N^{-1}( \omega_{P}^{\mathsf{b}} ) } \rangle \right|  
		\\ & \lesssim \dense(f,\mathbb{P}) \sum_{G \in \mathcal{G},G \subset 9I_T} \left\langle |f| \cic{1}_{G\cap N^{-1}(\omega_{P'(G)}^{\mathsf{p}({1})})}, \sum_{\substack{P \in T_G^{\mathsf{lac,+}}}} |  \zeta_P |\cic{1}_{\omega_{P}^{\mathsf{b}}}(N(\cdot)) \right\rangle 
		\\ &\lesssim \sum_{G \in \mathcal{G}, G \subset 9I_T} |G| \dense(f,\mathbb{P})^2 \lesssim |I_T| \dense(f,\mathbb{P})^2
\end{split}
\end{equation}
For the passage to the second line, note that the intervals $\Omega^{\mathsf{b}} (T^{\mathsf{ov},+}(G))$ are   all contained in $  {\omega_{P'(G)}^{\mathsf{p}({1})}}$. The subsequent step was obtained via a Lipschitz estimate in the second argument of $  \psi_P \in \Psi(P) $  and subsequently taking advantage of \eqref{e:minsc}, so that
\[
\sum_{\substack{P \in T^{\mathsf{lac}}}} |  \zeta_P(x)| \cic{1}_{\omega_{P}^{\mathsf{b}}}(N(x))  
\lesssim  \sum_{ P \in T^{\mathsf{lac}} : \ell_{\omega_P}\geq \delta(x) } \chi_{I_P}(x) \frac{\delta(x)}{\ell_{\omega_P}}  \lesssim 1.
\]
We are left with estimating the $*=+$  summand in \eqref{eqlac}, where   the  $\varphi_P$ have been replaced by the almost orthogonal wavelets $\phi_P$. A principal role   is played by the tree operator
\[
H_T f \coloneqq \sum_{P\in T^{\mathsf{lac}}}   |I_P|A[f](P) \phi_P.
\] 
 As $\xi_T\in \omega^{\mathsf{b}}$ for all $\omega\in\Omega(T^{\mathsf{lac}})$, the intervals  $\Omega(T^{\mathsf{lac}})$ form a lacunary sequence, that is \[
  \omega\subset\left\{\xi\in \R:\textstyle\frac{\ell_{\omega}}{2}\leq \dist(\xi,\xi_T)\leq 2 \ell_{\omega}\right\}\quad \forall \omega\in \Omega(T^{\mathsf{lac}}), \;\ell_\omega > \ell_\star\coloneqq \min\left\{\ell_{\omega'}:\omega'\in\Omega(T^{\mathsf{lac}})\right\}.\]
  For $\alpha \in \{\ell_\omega:\omega\in \omega(T^{\mathsf{lac}})\}$, let 
$\Psi_{\alpha}$ be   even, real valued Schwartz functions with \[1_{[\frac12\alpha,2\alpha]}\leq \widehat \Psi_{\alpha}\leq 1_{[\frac25\alpha,\frac{11}{5}\alpha]}  \quad \alpha >\ell_\star ,\qquad   1_{[0,2\ell_\star]}\leq \widehat \Psi_{\ell_\star} \leq 1_{[0,\frac{11}{5}\ell_\star]}  .\]
Assuming  that $\{\ell_\omega:\omega\in T^{\mathsf{lac}}\}$ are separated by a factor of 4, and arguing by finite splitting otherwise, we obtain for all $\ell_\star\leq \alpha\leq \beta$,
\[\begin{split} &
\bigg|H_{T,\alpha,\beta} f\coloneqq
\sum_{\substack{P\in T^{\mathsf{lac}}\\ \alpha \leq \ell_{\omega_P} < \beta  }}  |I_P|A[f](P) \phi_P = \left[\Psi_\beta - \Psi_\alpha\right]* H_{T} f\bigg| \lesssim \mathrm{M} [H_{T} f] ,  
\end{split}\] 
 due to the frequency support conditions $\phi_P \subset \omega_P$. 
  Relying on the definition of $\delta(\cdot)$, cf.\ \eqref{e:minsc},  the modified $*=+$  summand in \eqref{eqlac} is then estimated by
\[
\begin{split} &\quad 
 \sum_{\substack{G \in \mathcal{G} \\ G \subset 9I_T}}   \int_G  |f| \cic{1}_{ N^{-1}(\omega_{P'(G)}^{\mathsf{p}({1})})} \left|H_{T,\delta(\cdot),\frac{1}{\ell_G}}   f\right| \lesssim \dense(f,\mathbb{P})^2    \int_{9I_T} \mathrm{M} [H_{T} f] \\ & \lesssim |I_T|\dense(f,\mathbb{P})^2    \|H_{T} f\|_2  \lesssim 
  |I_T|\dense(f,\mathbb{P})^2\left( \sum_{P\in T^{\mathsf{lac}}} |I_P| A[f](P)^2 \right)^{\frac12}. 
\end{split}
\]  
Balancing out the obtained bounds  completes the estimation of  $\mathsf{lac}$ term in \eqref{e:basicpf1}, and in turn, the proof of Lemma \ref{pf:densitydom}.

\subsection{Proof of Lemma \ref{l:densitycarl} } \label{pf:densitycarl}	
	The selection of the trees $T\in \mathcal F$ and consequent estimation of $\dense(f,\mathbb P_-)$   is identical to \cite[Proposition 3.1]{LT} and is thus omitted. To prove \eqref{e:topsstop}, it suffices to show that whenever $ \mathbb P'\subset \mathbb S^J$ is a set of pairwise incomparable tiles with respect to \eqref{e:feffor}    	\begin{equation} \label{e:topsdensek} \inf_
	{P\in \mathbb P'}\big\llangle  f \cic{1}_{N^{-1}( \omega_{P}^{\mathsf{p}(1)})  }\big\rrangle>\delta \implies 
		\sum_{P\in \mathbb P'} |I_P| \lesssim \delta^{-1} |J| \inf_J \mathrm{M}f.
	\end{equation}
	 Due to the premise of \eqref{e:topsdensek}, for each $P \in \mathbb P'$ there exists $k=k_P\geq 0$ with the property that
	\[
	\int_{2^kI_P \cap N^{-1}(\omega_{P}^{\mathsf{p}(1)})} |f| \geq 2^{6k} \delta|I_P|.
	\] and $k_P$ is minimal with this property.
	Let $\mathbb P'_k$ be the collection of all $P\in \mathbb P'$ with $k_P=k$. Perform the following iterative selection.
	Initialize $\mathbb A\coloneqq \mathbb P'_k, \mathbb B=\varnothing$.  Among those  $P^\star\in \mathbb A $ with \[
	2^kI_{P^\star} \times\omega_{P^{\star}}^{\mathsf{p}(1)}\cap 2^kI_{P} \times \omega_{P}^{\mathsf{p}(1)}=\varnothing \qquad \forall P \in \mathbb B\]
select  one with $\scl(P^\star)$ maximal, and 
	 set  $\mathbb A\coloneqq \mathbb A\setminus \{P^\star\}$, $\mathbb B\coloneqq \mathbb B\cup \{P^\star\}$. Repeat until no such $P^\star\in \mathbb A$ is available.
	At this point, we may partition $\mathbb P'_k=\bigcup\{\mathbb P'_k(P^\star):P^\star\in \mathbb B\}$ where $P\in \mathbb P'_k(P^\star)$ if
	\[ 2^kI_{P} \times \omega_{P}^{\mathsf{p}(1)}\cap 
	2^kI_{P^\star} \times\omega_{P^{\star}}^{\mathsf{p}(1)}\neq \varnothing , \qquad \scl(P) \leq \scl(P^\star).
	\]
	Notice that if $P,P'\in \mathbb P'_k(P^\star) $ then $\omega_{P}^{\mathsf{p}(1)}\cap  \omega_{P'}^{\mathsf{p}(1)} \supset \omega_{P^{\star}}^{\mathsf{p}(1)} $, and $P,P'$ are incomparable, so that the intervals $\{I_P: P \in \mathbb P'_k(P^\star)\}$ are pairwise disjoint and contained in $2^{k+2} I_{P^\star}$. We then have
	\[
	\begin{split} &\quad
		\sum_{P\in \mathbb P'_k} |I_P| \leq \sum_{P^\star\in \mathbb B} \sum_{P\in \mathbb P'_k(P^\star)} |I_P| \lesssim 2^k    \sum_{P^\star\in \mathbb B} |I_{P^\star}| \lesssim 2^{-5k}\delta^{-1}\sum_{P^\star\in \mathbb B}\int_{2^kI_{P^\star} \cap N^{-1}(\omega_{P^{\star}}^{\mathsf{p}(1)})} |f|
		\\
		& \lesssim 2^{-4k} \delta^{-1} |J| \langle f\rangle_{1,2^{k+2} J} \lesssim 2^{-4k}\delta^{-1} |J| 
		\inf_J \mathrm{M}f.
	\end{split}
	\]
	To pass to the second line, note that the sets $2^kI_{P^\star} \cap N^{-1}(\omega_{P^{\star}}^{\mathsf{p}(1)})$, $P^\star\in \mathbb B$ are pairwise disjoint  and contained in $2^{k+3} J$. Then \eqref{e:topsdensek} follows by summing over $k$.
 
\section{Localized wave packet estimates near $L^1$} \label{s:lwpe}
If  the local $L^2$-averages of $f$  are under control, we may combine the bound of Proposition \ref{sizelemmacompressed} with  \eqref{usual2} in the single localized estimate
\begin{equation}
\label{e:2emb} 
	\left\|W[f]\cic{1}_{\mathbb P}\right\|_{Y^{2,\infty}({J,\kappa,\size}_{2,\star})} \leq C_{\kappa}  [f]_{2,\mathbb P}.
\end{equation}
The quantities $[f]_{p,\mathbb P}$ have been introduced in \eqref{e:localpnorm}.
This section contains the statement and main line of proof of a localized estimate for the wave packet transform in terms of  local $L^p$ norms in the range $1< p\leq 2$, with good control on the estimate as $p\to 1^+$. Throughout the remainder of this section, we  enforce the formal assumptions of  Remark \ref{r:formal} without further explicit mention.

The main result of \cite{DPOu}, a first substitute for  \eqref{e:2emb} outside local $L^2$, is recalled in the next proposition.
\begin{proposition} \label{p:embdpou}Let $1<p\leq 2$.  For all $t>1$ there exists $C_{t,p,\kappa}>1$ such that the following holds. Let $J$ be any interval, $f\in L^{\infty}_{0}(\R)$ and $\mathbb P\subset \mathbb S$.
	Then
	\[
	\left\|W[f\cic{1}_{3J}]\cic{1}_{\mathbb P}\right\|_{L^{tp'}({J,\kappa,\size}_{2,\star})} \leq C_{t,p,\kappa}  [f]_{p,\mathbb P}.	\]
\end{proposition}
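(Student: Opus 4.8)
The plan is to prove this directly from the two localized embeddings already in hand — the local $L^{2}$ bound \eqref{e:2emb} and the $L^{\infty}$ bound of Proposition~\ref{sizelemmacompressed} — by layering $f$ into dyadic amplitude shells and interpolating shell by shell in the outer Lebesgue scale. First I would normalize $[f]_{p,\mathbb P}=1$ (by homogeneity) and write $f\cic 1_{3J}=\sum_{k\in\Z}f_{k}$ with $f_{k}\coloneqq f\cic 1_{3J}\cic 1_{\{2^{k-1}<|f|\le 2^{k}\}}\in L^{\infty}_{0}(\R)$. Since $W[\cdot]$ is subadditive and the outer $L^{tp'}$ quasi-norm obeys a triangle inequality (\cite[\S3]{DT2015}),
\[
\big\|W[f\cic 1_{3J}]\cic 1_{\mathbb P}\big\|_{L^{tp'}(J,\kappa,\size_{2,\star})}\ \le\ \sum_{k\in\Z}\big\|W[f_{k}]\cic 1_{\mathbb P}\big\|_{L^{tp'}(J,\kappa,\size_{2,\star})},
\]
so it is enough to bound each summand with geometric decay in $k$.

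The elementary input is a crude estimate for the localized maximal averages $[f_{k}]_{q,\mathbb P}$, $q\in\{1,2\}$. Given $P\in\mathbb P$, choose $x\in I_{P}$ with $\mathrm M_{p}f(x)\le[f]_{p,\mathbb P}=1$. Since $|f_{k}|\le 2^{k}\cic 1_{\{|f|>2^{k-1}\}}$, for every interval $I\ni x$ we have $\frac{1}{|I|}\int_{I}|f_{k}|^{q}\le 2^{qk}\,\frac{|I\cap\{|f|>2^{k-1}\}|}{|I|}\le 2^{qk}\min\{1,2^{-(k-1)p}\}$, using Chebyshev and $\langle|f|^{p}\rangle_{1,I}\le\mathrm M_{p}f(x)^{p}\le1$ in the last step. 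Taking $q$-th roots, then $\sup_{I\ni x}$, and finally $\sup_{P\in\mathbb P}\inf_{I_{P}}$ gives
\[
[f_{k}]_{q,\mathbb P}\ \lesssim_{p}\ 2^{k}\ \ (k\le 1),\qquad [f_{k}]_{q,\mathbb P}\ \lesssim_{p}\ 2^{k(1-p/q)}\ \ (k\ge 1).
\]
Thus $[f_{k}]_{1,\mathbb P}\lesssim_{p}\min\{2^{k},2^{k(1-p)}\}$ and $[f_{k}]_{2,\mathbb P}\lesssim_{p}\min\{2^{k},2^{k(1-p/2)}\}$: the $\ell^{2}$-type quantity still grows slowly as $k\to+\infty$ when $p<2$, but the $L^{\infty}$-type quantity decays there, and both decay as $k\to-\infty$.

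The last step is to feed the shells into the two embeddings and interpolate. By \eqref{e:2emb} (whose $Y^{2,\infty}$ bound contains the $L^{2,\infty}$ bound) and by Proposition~\ref{sizelemmacompressed},
\[
\big\|W[f_{k}]\cic 1_{\mathbb P}\big\|_{L^{2,\infty}(J,\kappa,\size_{2,\star})}\ \le\ C_{\kappa}[f_{k}]_{2,\mathbb P},\qquad \big\|W[f_{k}]\cic 1_{\mathbb P}\big\|_{L^{\infty}(J,\kappa,\size_{2,\star})}\ \lesssim\ [f_{k}]_{1,\mathbb P}.
\]
Since $tp'>2$, the layer-cake formula for the nondecreasing rearrangement gives, for a single function $F$ on the outer measure space, $\|F\|_{L^{tp'}}\lesssim_{t,p}\|F\|_{L^{2,\infty}}^{2/(tp')}\,\|F\|_{L^{\infty}}^{1-2/(tp')}$; applied to $F=W[f_{k}]\cic 1_{\mathbb P}$ and combined with the shell bounds, the $k$-th summand is $\lesssim_{\kappa,t,p}2^{k}$ for $k\le 1$, and for $k\ge1$ is proportional to $2^{k\sigma}$, where
\[
\sigma=\tfrac{2}{tp'}\big(1-\tfrac{p}{2}\big)+(1-p)\big(1-\tfrac{2}{tp'}\big)=(1-p)+\tfrac{p}{tp'}=(1-p)+\tfrac{p-1}{t}=-\tfrac{(p-1)(t-1)}{t}<0.
\]
Summing the two geometric series over $k$ bounds everything by $C_{\kappa}\big(1-2^{-(p-1)(t-1)/t}\big)^{-1}$ up to a $(t,p)$-dependent factor, which is finite for every $t>1$, $1<p\le2$, and is a constant of exactly the advertised shape, degenerating as $t\to1^{+}$ (and as $p\to1^{+}$). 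The only delicate point is the summability over $k$ of the $L^{\infty}$-endpoint contributions; this is what forces the use of the \emph{sharp} small-shell bound $[f_{k}]_{1,\mathbb P}\lesssim_{p}\min\{2^{k},2^{k(1-p)}\}$ coming from Proposition~\ref{sizelemmacompressed}, rather than the weaker $L^{\infty}$ half of \eqref{e:2emb} — otherwise the argument is entirely soft. I note, for context, that \cite{DPOu} instead obtained this embedding through a multi-frequency Calder\'on-Zygmund decomposition in the spirit of Nazarov, Oberlin and Thiele \cite{NOT2010}, a device the availability of Proposition~\ref{sizelemmacompressed} renders unnecessary here.
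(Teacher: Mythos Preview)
Your proof is correct and takes a genuinely different route from the paper's. The paper does not prove this proposition at all; it merely cites it as the main result of \cite{DPOu}, whose argument proceeds via a multi-frequency Calder\'on--Zygmund decomposition of the Nazarov--Oberlin--Thiele type \cite{NOT2010}.

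Rather than decomposing $f$ spatially into good and bad parts adapted to a forest of trees, you layer $f\cic{1}_{3J}$ by amplitude and interpolate shell by shell. The crucial input is the sharp $L^{\infty}$ endpoint of Proposition~\ref{sizelemmacompressed}, namely $\|W[g]\cic{1}_{\mathbb P}\|_{L^{\infty}}\lesssim[g]_{1,\mathbb P}$, which is strictly stronger than the $[g]_{2,\mathbb P}$ bound contained in \eqref{e:2emb}; this extra gain is exactly what forces the exponent $\sigma=-\tfrac{(p-1)(t-1)}{t}$ to be negative and the shell sum to converge. Beyond that you need only subadditivity of $W$, the countable quasi-triangle inequality for outer $L^{tp'}$ \cite[\S3]{DT2015}, and the elementary log-convexity $\|F\|_{L^{q}}\lesssim_{q}\|F\|_{L^{2,\infty}}^{2/q}\|F\|_{L^{\infty}}^{1-2/q}$ read off from the rearrangement $F^{*,\size_{2,\star}}$. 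The resulting constant $\sim(1-2^{\sigma})^{-1}\sim\tfrac{t}{(p-1)(t-1)}$ has precisely the advertised $C_{t,p,\kappa}$ shape, blowing up as $p\to1^{+}$ or $t\to1^{+}$.

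What each approach buys: yours is shorter and avoids the multi-frequency machinery entirely, but the $(p-1)^{-1}$ blow-up is baked into the geometric sum and cannot be removed by this method. The forest-based decomposition of \cite{DPOu}, while heavier, is the template the paper later refines in Section~\ref{s:lwpe} --- replacing the abstract Borwein--Erd\'elyi projection by the smooth Gabor expansion on minimal tiles --- to reach Theorem~\ref{mainemb}, where the constant becomes uniform in $p$ at the price of landing in the weaker $X^{tp',\infty}_{2}$ norm.
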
  Proposition \ref{p:embdpou} has been used to prove sparse  and localized estimates for the Carleson operator \cite{DPDoU} and the bilinear Hilbert transform \cite{CDPOUBHT}. However, an inspection of the proof shows that having fixed $t>1$, the constant $C_{t,p}$ blows up polynomially  in $(p-1)^{-1}$ as $p\to 1^+$. 

 The next theorem, which is the main technical novelty of this work, provides us with a substitute embedding that does not blow up near $p=1$. Remark \ref{weaklpineq} tells us  that the norms $X^{tp',\infty}_2$ are weaker than  the ones appearing on the left hand side of Proposition \ref{p:embdpou}. Nonetheless, the generalized H\"older inequality of Proposition \ref{p:eHolder} makes Theorem \ref{mainemb} applicable for our purposes.
 
 \begin{theorem} \label{mainemb}   For all $t>1$ there exists $C_{t,\kappa}>1$ such that the following holds. Let $1<p\leq 2,$ $J$ be any interval, $f\in L^{\infty}_{0}(\R)$ and $\mathbb P\subset \mathbb S$ . Then 
\begin{equation}
\label{e:mainemb1}
	\left\|W[f]\cic{1}_{\mathbb P}\right\|_{X^{tp',\infty}_2({J,\kappa,\size}_{2,\star})} \leq  C_{t,\kappa}  [f]_{p,\mathbb P}. 
\end{equation}
\end{theorem}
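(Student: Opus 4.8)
The plan is to recast \eqref{e:mainemb1} as a super-level set bound and prove it by a stopping-tree selection fed by the new smooth frequency decomposition of $f$. Unwinding the definitions of the $X^{tp',\infty}_2$-norm and of the outer $L^{2,\infty}$ quasi-norm, \eqref{e:mainemb1} is equivalent to the assertion that
\[
\mu_{\size_{2,\star}}\big[W[f]\cic{1}_{A\cap\mathbb P}\big](\lambda)\ \lesssim_{t,\kappa}\ \Big(\frac{[f]_{p,\mathbb P}}{\lambda}\Big)^{2}\big[\mu^{J,\kappa}(A)\big]^{1-\frac{2}{tp'}},\qquad \lambda>0,
\]
uniformly over all $A\subset\mathbb S^J$ of finite outer measure. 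After normalizing $[f]_{p,\mathbb P}=1$ and splitting off, via the polynomial decay factor of Proposition \ref{sizelemmacompressed} on dyadic annuli around $J$, the part of $f$ supported away from $J$ --- whose wave packet transform is bounded in $L^{\infty}({J,\kappa,\size}_{2,\star})$ with a gain overwhelming any fixed power of $\mu^{J,\kappa}(A)$ --- one reduces to $f$ supported in $3J$, in which case $\langle |f|^p\rangle_{3J}\le 1$ by the very definition of $[f]_{p,\mathbb P}$. In the regime $\lambda\lesssim\mu^{J,\kappa}(A)^{-1/(tp')}$ the bound is immediate, since one may take $A\cap\mathbb P$ itself as the exceptional set; thus only $\lambda\gg\mu^{J,\kappa}(A)^{-1/(tp')}$ requires genuine work.

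For that regime I would run the usual greedy selection of stopping trees: among the tiles of $A\cap\mathbb P$ not yet discarded, repeatedly select a tree $T$ with $\size_{2,\star}(W[f],T)>c\lambda$ whose top $I_T$ is maximal (resolving ambiguities through Lemma \ref{lacpartislac}, so that lacunary parts are honestly lacunary), discard $T$, and iterate; call $\mathcal F$ the resulting forest and $B=\bigcup_{T\in\mathcal F}T$. By maximality of the selection together with Lemma \ref{treestructure}, $\outsup_{\size_{2,\star}}\big(W[f]\cic{1}_{(A\cap\mathbb P)\setminus B}\big)\lesssim\lambda$, and since $\mu^{J,\kappa}(B)\le|J|^{-1}\sum_{T\in\mathcal F}|I_T|$ the whole matter is reduced to the packing estimate
\[
\sum_{T\in\mathcal F}\frac{|I_T|}{|J|}\ \lesssim_{t,\kappa}\ \lambda^{-2}\,\big[\mu^{J,\kappa}(A)\big]^{1-\frac{2}{tp'}}.
\]

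This is where the smooth multi-frequency decomposition enters. Writing $\mathsf N(x)=\#\{T\in\mathcal F:x\in I_T\}$ for the counting function of the forest and fixing a Calder\'on--Zygmund height $h$, to be optimized against $\lambda$ and $\mu^{J,\kappa}(A)$, I would let $\{Q\}$ be the maximal dyadic intervals on which an appropriate tailed $L^p$ average of $f$ exceeds $h$, expand $f$ in a Gabor series spatially adapted to the intervals $Q$ and to the scales present in $\mathcal F$, and split $f=f_{\mathrm{pr}}+f_{\mathrm{err}}$, retaining in the principal part $f_{\mathrm{pr}}$ only those Gabor modes whose frequency falls within a bounded multiple of the active frequency $\xi_T$ at the corresponding scale for some $T\in\mathcal F$. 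The principal part is then spectrally concentrated along the top frequencies of the forest, hence locally in $L^2$ with tailed $L^2$ averages controlled by $\mathsf N^{1/2}$ times the Calder\'on--Zygmund height; feeding this into the $L^2$ embedding \eqref{e:2emb}, summing the squared sizes against $|I_T|$, and exploiting the Carleson/$\mathrm{BMO}$ packing of the tree tops --- which controls $\int\mathsf N^2$ by $\int\mathsf N$ --- together with $\int_{3J}|f|^p\lesssim|J|$ and the covering of $A$ by trees of total top-length $\le 2\mu^{J,\kappa}(A)|J|$, produces precisely the $\mu^{J,\kappa}(A)^{1-2/(tp')}$ factor with a constant depending only on $t$ and $\kappa$, in place of the $(p-1)^{-1}$ loss of Proposition \ref{p:embdpou}. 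The remainder $f_{\mathrm{err}}$ is genuinely smooth: it inherits the local $L^p$ averages of $f$ with no dependence on $\mathsf N$, and its Fourier transform is suppressed away from each $\xi_T$; pairing it against the wave packets of any stopping tree, the frequency separation built into Lemma \ref{lacpartislac}(ii) and the decay of the Gabor bumps give a contribution to $W[f_{\mathrm{err}}](P)$ that sums, over each tree and over the forest, to something $\ll\lambda$, so $f_{\mathrm{err}}$ never triggers the stopping and may simply be discarded; the extra slack $t>1$ is what makes the geometric series in this error estimate converge.

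The step I expect to be the main obstacle is the construction and estimation of the smooth remainder $f_{\mathrm{err}}$. In contrast with the Borwein--Erd\'elyi / Nazarov--Oberlin--Thiele multi-frequency lemma used in \cite{NOT2010,DPOu}, where the orthogonal remainder has local norms of the same order as the projected good part --- and hence still carries the counting function --- one must arrange the Gabor split so that $f_{\mathrm{err}}$ retains only the small local $L^p$ size of $f$ while simultaneously enjoying quantitative frequency decay and separation from every $\xi_T$. Making these two demands compatible, and then calibrating the Calder\'on--Zygmund height $h$ so that the principal-part estimate and the error estimate balance to the sharp exponent $1-2/(tp')$ with a $p$-independent constant, is the delicate heart of the argument; everything else is a routine combination of the outer $L^p$ machinery of Section \ref{s:outer} with classical Calder\'on--Zygmund theory.
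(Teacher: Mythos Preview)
Your overall strategy---reformulate as a super-level set bound, split $f$ into a principal part controllable via the $L^2$ embedding and a smooth error with tiny wave-packet coefficients---is the right shape, but the sketch as written has a circularity that the paper's argument is specifically designed to avoid.

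You build the decomposition $f=f_{\mathrm{pr}}+f_{\mathrm{err}}$ relative to the \emph{stopping} forest $\mathcal F$, selected greedily from $A\cap\mathbb P$ at level $\lambda$, and then invoke the counting function $\mathsf N$ of that same forest to control the local $L^2$ averages of $f_{\mathrm{pr}}$. But $\int\mathsf N=\sum_{T\in\mathcal F}|I_T|$ is precisely the quantity you are trying to bound, so the packing argument closes on itself: feeding $\|f_{\mathrm{pr}}\|_2^2\lesssim h^2\int\mathsf N$ into the Bessel-type inequality $\lambda^2\sum|I_T|\lesssim\|f_{\mathrm{pr}}\|_2^2$ only yields $\lambda\lesssim h$, not a bound on $\sum|I_T|$. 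Your escape hatch, that ``Carleson/BMO packing controls $\int\mathsf N^2$ by $\int\mathsf N$,'' does not hold for a forest selected by size-stopping---the tops $I_T$ can pile up without any Carleson condition. This circularity is exactly what forces the $(p-1)^{-1}$ loss in the older arguments; introducing the smooth remainder does not by itself cure it.

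The paper breaks the loop by a different organization. It does \emph{not} run stopping on $f$. Instead it takes a near-optimal cover $\mathcal T$ of $A$ by trees---which exists by definition of $\mu^{J,\kappa}(A)=N$ and satisfies $\sum_{T\in\mathcal T}|I_T|\le 2N|J|$ a priori, independently of $f$---and builds the smooth decomposition $f=g+b$ relative to the top data $\mathcal F$ of \emph{that} cover. Since the counting function of $\mathcal T$ has total mass $\le 2N|J|$ by construction, the $L^p\to L^2$ bound for the local Gabor projections on the purely geometric grid $\mathrm{CZ}_K(\{I_P:P\in\mathbb P\})$ gives $\|g\|_2\lesssim |J|^{1/2}N^{1/2-1/(tp')}[f]_{p,\mathbb P}$ directly, and the global $L^2$ embedding \eqref{usual2} handles $W[g]$. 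For the smooth remainder $b$, Lemma~\ref{off} gives $\size_{2,\star}(W[b],T)\lesssim K^{-M/20}[f]_{1,\mathbb P}$ for each $T\in\mathcal F$, and Lemma~\ref{treestructure} upgrades this to an $L^\infty(\size_{2,\star})$ bound on all of $\mathbb P\cap A$. There is no Calder\'on--Zygmund height $h$ tied to level sets of $f$; the only parameter to tune is the frequency separation $K=N^{c/p'}$.

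In short: the forest used to build the principal part must have its counting function controlled \emph{before} you look at $f$, and the covering of $A$ provides exactly that. If you replace your stopping forest by the covering trees of $A$ in the construction of $f_{\mathrm{pr}}$, and drop the height $h$ in favor of the geometric grid, your outline collapses to the paper's argument.
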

\begin{remark} \label{smoothnesstprime} We clarify a delicate point in the statement of  Theorem \ref{mainemb}. Fixing $t$, the smoothness level of the wave packet transform, as defined in \eqref{e:wpt}, required for   Theorem \ref{mainemb}  must be greater or equal to, say, $ M  = 10 \cdot \lceil 2^{8}t' \rceil. $  Theorem \ref{mainemb} will be  applied below with the fixed choice   $ t=2 $, so that   a fixed level of smoothness, say $  M =10 \cdot2^{9}, $ is sufficient.

\end{remark}
\begin{remark}[On sharpness of Theorem \ref{mainemb}]Theorem \ref{mainemb} is sharp in the sense that it captures the necessary linear growth in $p'$ of the target exponent in the left hand side. More precisely,  if the function $q=q(p)$ is such that the estimate\[
\left\|W[f ]\cic{1}_{\mathbb P}\right\|_{X^{q(p),\infty}_2({J,\kappa,\size}_{2,\star})} \leq  C  [f]_{p,\mathbb P}
\]
holds with a uniform constant $C$ for all $1<p\leq 2$, then 
$
\liminf_{p\to 1^+} \frac{q(p)}{p'} =t 
$ 
 for some $t\geq 1$. This is easily seen by  testing the uniform inequality on the family of functions
$ f=
 f_n\coloneqq  \sum_{j=0}^n \Mod_{4j} \phi
$
 where $\phi$ is any smooth function with $\widehat{\phi}(0)=1$ and $\supp \phi\subset [0,1]$, and on the collections $\mathbb P=\mathbb P_n=\{P: I_P=[0,1), \omega_P=[j,j+1), j=1,\ldots, 4n \}$. 
 
If the wave packet transform is replaced with its Walsh group analogue, the methods of e.g.\ \cite{DPCRM} can be used to obtain the case $t=1$ of \eqref{e:mainemb1} as well. While this does not seem to be achievable with the techniques of the present paper for the Fourier case,  we stress that  being able to take $t=1$ in \eqref{e:mainemb1} would not lead to any improvements in our quantified estimates for the Carleson operator.
\end{remark}

 The proof of Theorem \ref{mainemb} occupies the remainder of this section and is structured as follows.   Subsection \ref{ss:rwc} introduces a  generalization of the wavelet classes  $\Phi^M(P)$ of \eqref{e:refwps} where the compact frequency support assumption is relaxed to requiring instead  vanishing moments with respect to a fixed frequency.  
 
  \subsection{Relaxed wavelet classes}  \label{ss:rwc} For an interval $I\subset \R$   and $\xi\in \R$, define the normalized classes
\begin{equation}
\label{e:refwpsrel}
\begin{split}
&\Theta_1^{M} (I,\xi) \coloneqq\left\{\mathrm{Mod}_{\xi} \mathrm{Tr}_{c(I)}
\mathrm{Dil}_{\scl(I)}^1 \vartheta :\vartheta\in \Phi^M\right\}
\\
&  \Theta_0^{M} (I,\xi) \coloneqq\left\{\mathrm{Mod}_{\xi} \mathrm{Tr}_{c(I)}
\mathrm{Dil}_{\scl(I)}^1 \vartheta :\upsilon\in \Phi^M, \widehat \upsilon(0)=0\right\}.
\end{split}
\end{equation} As usual we drop the $M$ when irrelevant or clear from context.
If $  P \in  \mathbb{S} $ is a tile and $\phi_P \in \Phi (P) $ we have the inclusions
\[
\xi \in \omega_P^{\mathsf{p}(\kappa)}   \implies \phi_P \in C_{\kappa} \Theta_1  (I_P,\xi), \qquad 
\xi \in \omega_P^{\mathsf{p}(\kappa)} \setminus \omega_P \implies \phi_P \in  C_{\kappa} \Theta_0  (I_P,\xi).
\]
 The next lemma is a restatement of \cite[Lemma 5.2]{DPOu}.  
\begin{lemma} \label{decompwvltslemma} Suppose $\imath\in \{0,1\}$,  $  \phi \in \Theta^{3M}_\imath(I,\xi) $, and $K\geq 1$. Then  \[
\phi= \psi+ K^{-M} \upsilon, 
\qquad \psi,\upsilon \in C_M\Theta^{M}_\imath(I,\xi), \qquad  
\supp \psi \subset K I.
\]   
\end{lemma}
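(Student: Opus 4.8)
The plan is to reduce to a reference configuration via the modulation--translation--dilation symmetry, then split $\phi$ by a smooth spatial cut-off and correct the zeroth moment when $\imath=0$. Since the class $\Theta^M_\imath(I,\xi)$ of \eqref{e:refwpsrel} is, by definition, the image of the fixed bump class $\Theta^M$ (for $\imath=1$), resp.\ of its subspace of functions with vanishing integral (for $\imath=0$), under the linear map $\mathrm{Mod}_\xi\mathrm{Tr}_{c_I}\mathrm{Dil}_{\scl(I)}^1$, and since this map sends $[-\tfrac12,\tfrac12)$ to $I$ and hence $[-\tfrac K2,\tfrac K2]$ to $KI$, every assertion in the statement is equivariant; thus it suffices to treat $I=[-\tfrac12,\tfrac12)$, $\xi=0$. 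We are reduced to the following: given $\phi\in\Theta^{3M}$, that is $|\langle x\rangle^{3M}D^\alpha\phi(x)|\le 1$ for $0\le\alpha\le 3M$ (and $\int\phi=0$ if $\imath=0$), and given $K\ge 1$, produce $\psi,\upsilon$ with $\|\psi\|_M,\|\upsilon\|_M\lesssim_M 1$, $\supp\psi\subset[-\tfrac K2,\tfrac K2]$, $\phi=\psi+K^{-M}\upsilon$, and (for $\imath=0$) $\int\psi=\int\upsilon=0$. To this end I would fix an even $\eta\in\mathcal C^\infty_0$ with $1_{[-1/4,1/4]}\le\eta\le 1_{[-1/2,1/2]}$ and a fixed $\chi\in\mathcal C^\infty_0((-\tfrac14,\tfrac14))$ with $\int\chi=1$, and set $\eta_K\coloneqq\eta(\cdot/K)$, so that $\supp\eta_K\subset[-\tfrac K2,\tfrac K2]$, $\eta_K\equiv1$ on $[-\tfrac K4,\tfrac K4]$, and $|D^\beta\eta_K|\lesssim_\beta K^{-\beta}\le 1$ because $K\ge 1$.

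Next I would write $\phi=\psi_0+\rho$ with $\psi_0\coloneqq\eta_K\phi$ and $\rho\coloneqq(1-\eta_K)\phi$, so that $\supp\psi_0\subset[-\tfrac K2,\tfrac K2]$ and $\supp\rho\subset\{|x|\ge\tfrac K4\}$, and prove two estimates. First, by the Leibniz rule, $|D^\beta\eta_K|\lesssim_\beta 1$, and the decay of $\phi$, one gets $|\langle x\rangle^MD^\alpha\psi_0(x)|\lesssim_M\langle x\rangle^{-2M}\le 1$ for $0\le\alpha\le M$, hence $\|\psi_0\|_M\lesssim_M 1$. Second, on $\supp\rho$ one has $\langle x\rangle\ge\max\{1,\tfrac K4\}$, so the same computation gives $|\langle x\rangle^MD^\alpha\rho(x)|\lesssim_M\langle x\rangle^{-2M}\lesssim_M K^{-M}$, i.e.\ $\|\rho\|_M\lesssim_M K^{-M}$, and likewise $|\int\rho|\le\int_{|x|\ge K/4}\langle x\rangle^{-3M}\,\d x\lesssim_M K^{-M}$ (here the $3M$ smoothness is comfortably more than needed). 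For $\imath=1$ this already concludes: take $\psi\coloneqq\psi_0$ and $\upsilon\coloneqq K^M\rho$, so $\phi=\psi+K^{-M}\upsilon$, $\supp\psi\subset[-\tfrac K2,\tfrac K2]$, and $\|\upsilon\|_M=K^M\|\rho\|_M\lesssim_M 1$.

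For $\imath=0$ the function $\psi_0$ need not integrate to zero, so I would correct it with $\chi$. Put $c\coloneqq\int\psi_0=\int\eta_K\phi$; using $\int\phi=0$ we rewrite $c=-\int(1-\eta_K)\phi=-\int\rho$, whence $|c|\lesssim_M K^{-M}$ by the tail bound above. Define
\[
\psi\coloneqq\psi_0-c\chi,\qquad \upsilon\coloneqq K^M(\rho+c\chi).
\]
Then $\phi=\psi_0+\rho=\psi+K^{-M}\upsilon$; since $\supp\chi\subset[-\tfrac14,\tfrac14]\subset[-\tfrac K2,\tfrac K2]$ we have $\supp\psi\subset[-\tfrac K2,\tfrac K2]$; and $\int\psi=c-c\int\chi=0$, $\int\upsilon=K^M(\int\rho+c\int\chi)=K^M(-c+c)=0$. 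Finally $\|c\chi\|_M=|c|\,\|\chi\|_M\lesssim_M K^{-M}\le 1$, so $\|\psi\|_M\le\|\psi_0\|_M+\|c\chi\|_M\lesssim_M 1$ and $\|\upsilon\|_M\le K^M\|\rho\|_M+K^M|c|\,\|\chi\|_M\lesssim_M 1$. Undoing the normalization, $\psi$ and $\upsilon$ lie in $C_M\Theta^M_\imath(I,\xi)$ with a constant $C_M$ depending only on $M$ (and the fixed $\eta,\chi$); this reproduces the elementary argument of \cite[Lemma 5.2]{DPOu}.

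The argument is routine, and the only step that is not completely automatic is the correction in the case $\imath=0$: one must verify that the added bump $c\chi$ is itself small, of size $\lesssim_M K^{-M}$ in $\|\cdot\|_M$, and this is exactly where the hypothesis $\int\phi=0$ enters, through the identity $c=\int\eta_K\phi=-\int(1-\eta_K)\phi$ that turns $c$ into a genuine tail integral. It is also worth keeping track that each derivative landing on $\eta_K$ contributes a harmless factor $K^{-\beta}\le 1$ rather than any growth in $K$, and that the surplus decay of $\phi$ (weight $\langle x\rangle^{-3M}$ against the required $\langle x\rangle^{-M}$) is what supplies the gain $K^{-M}$ on both $\|\rho\|_M$ and $|\int\rho|$.
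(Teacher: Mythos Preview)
Your argument is correct. The paper does not supply its own proof of this lemma, merely pointing to \cite[Lemma 5.2]{DPOu}; your reduction by the $\mathrm{Mod}_\xi\mathrm{Tr}_{c_I}\mathrm{Dil}_{\scl(I)}^1$ symmetry, smooth cut-off $\eta_K$, and (in the $\imath=0$ case) mean-zero correction via $c\chi$ with $c=-\int\rho$ is precisely the standard elementary proof one expects from that reference, and all the estimates you record are accurate.
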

\begin{remark} \label{waveletdecomp} Let $P$ be a tile, and suppose either $\imath=1$, $  \xi \in {\kappa} \omega_P   $ or $\imath=0$, $  \xi \in {\kappa} \omega_P \setminus \omega_P  $. Then 
  Lemma \ref{decompwvltslemma} may be iterated to deduce the expansion of $\varphi_P\in \Phi^{3M}(P)$
\begin{equation}
\label{e:refwpsreldec}
  \varphi_P = \sum_{k\geq 0 } 2^{-Mk} \varphi_{P,k,\xi}, \qquad \varphi_{P,k,\xi}\in C_M \Theta_\imath^M (I_P,\xi), \qquad  \supp\varphi_{P,k,\xi}  \subset 2^k I_P.
\end{equation}
The expansion \eqref{e:refwpsreldec} is the form of Lemma \ref{decompwvltslemma} we will use in the sequel.
\end{remark}

\subsection{Space-frequency decomposition on minimal tiles} Our aim in this paragraph is to provide a space-frequency decomposition  induced by a  finite collection of spatial intervals $\mathcal I\subset \mathcal D$, where $\mathcal D$ is a fixed dyadic grid. The definition also involves a dilation factor $K\geq 1$. The spatial components of the forthcoming decomposition 
 will come from the collection
 \begin{equation}  \label{czintervals} 
	\mathrm{CZ}_K(\mathcal{J})\coloneqq \text{maximal elements of }  \left\lbrace   G \in \D  :9K^2G \not \supset J \text{ for all } J \in \mathcal{J} \right\rbrace \end{equation}
When clear from context, the subscript  $ K $  is dropped from the  notation. The following properties, which will be of use to us below, are  straightforwardly deduced from \eqref{czintervals}.	\begin{itemize}
		\item[(i)] If $  O \subset \R  $ is an open set, then $ \mathrm{CZ}_K(\mathcal{J},O)\coloneqq \{ G \in \mathrm{CZ}_K(\mathcal{J})  :  G \subset O \} $ partitions $ O $ up to a set of zero measure.
		\item[(ii)] \label{3goverlap} The collection $ \left\{ 3G:G \in \mathrm{CZ}_K(\mathcal{J}) \right\}$ has finite overlap.
		\item[(iii)] If $ J \in \mathcal J,  G \in \mathrm{CZ}_K(\mathcal{J}) $ and $  G \not \subset 9KJ $ then $  G \subset \R \setminus 3KJ $.
		\item[(iv)]  If $ J \in \mathcal{J}, G \in \mathrm{CZ}_K(\mathcal{J})$    and $ G \subset 3KJ $ then $  K \ell_G \leq \ell_J$.
		\item[(v)]  whenever $ h \in L^\infty_0(\R) $ say, there holds  \begin{equation}
\label{e:mutualcont}
\begin{split}
		\sup_{J \in \mathcal{J}} \inf_J \mathrm{M} h \lesssim  \sup_{G \in \mathrm{CZ}_K(\mathcal{J})} \inf_{G} \mathrm{M} h    \lesssim K^2 \sup_{J \in \mathcal{J}} \inf_J \mathrm{M}h.
	\end{split}\end{equation}
	\end{itemize} 
The corresponding collection of \emph{minimal space-frequency tiles} is then defined by
 \begin{equation}  \label{e:cztiles}  \mathbb{M}=\mathbb{M}(\mathcal{J}) \coloneqq \left\{G \times \left[\xi,\xi+\textstyle\frac{1}{\ell_G}\right) : G \in \mathrm{CZ}(\mathcal{J}), \xi \in \textstyle\frac{\Z}{\ell_G}\right\}\subset \mathbb S_{\mathcal D \times \mathcal D_0}.\end{equation}
It is clear that $ \mathbb{M} $ depends on $ \mathcal{J} $, but we choose to keep the latter implicit in the notation when clear from context.
Pick  $ \eta \in \mathcal{S}(\R) $ with $ \supp \eta \subset (-1,1) $ and $ \eta(0)=1 $. For $P\in \mathbb{M}$ define the approximate projection operator $\Pi_P$, acting on $f\in L^2(\R) $ by
\[
\Pi_P f \coloneqq  [f\cic{1}_{I_P}] * \eta_P, \qquad  \eta_P \coloneqq \Mod_{\mathrm{inf}\omega_P} \Dil_{\scl(P)}^1 \eta.
\]  The Poisson summation formula tells us that 
\begin{equation}
\label{e:sfd}
f=
\sum_{P \in \mathbb{M}} \Pi_P f 
\end{equation}
 with convergence in $L^2(\R)$ and almost everywhere. 
 The decomposition \eqref{e:sfd} is approximately space-frequency localized in the sense that
\begin{equation}
\label{e:sfd2}\supp \Pi_Pf \subset  3I_P, \qquad 
\Pi_P f \in C [f]_{1,{P}} \Theta_1(I_P,c_{\omega_P})
\end{equation}
for some absolute constant $C$.
The approximate projection onto the space-frequency region associated to some  $ \mathbb{W}\subset \mathbb{M} $ is then defined, for say $  f \in L^2(\R) $,   by \[ \Pi_{\mathbb{W}}f \coloneqq \sum_{P\in \mathbb{W}} \Pi_P f.\]
	Below, whenever $\mathbb W\subset \mathbb M$, by  \begin{equation}
	\label{e:WG} \mathbb{W}[G]=\left\{P \in \mathbb{W}: I_P=G  \right\} \end{equation}  we indicate the tiles of $\mathbb W$ having a fixed spatial interval  $ G \in \mathrm{CZ}_K(\mathcal{J}) $.  
  \subsection{Main line of proof of Theorem \ref{mainemb}} This paragraph reduces Theorem \ref{mainemb} to a Calder\'on-Zygmund type decomposition of $f$ with respect to an arbitrary family of top data. Details are as follows. To prove the estimate of Theorem \ref{mainemb}, having   fixed
 \[
 \mathbb P\subset \mathbb S^J\;\textrm{finite,}\qquad 
  \varnothing \subsetneq A\subset \mathbb S^J, \; {\mu^{J,\kappa}}(A) \eqqcolon N<\infty,
  \]
  we need to prove the control
 \begin{equation}
\label{e:maincontrolmet}    \left\| W[f] \cic{1}_{\mathbb{P} \cap A} \right\|_{L^{2,\infty}({J,\kappa,\size}_{2,\star})} \lesssim  N^{\frac{1}{2}-\frac{1}{tp'}} [f]_{p,\mathbb P}
\end{equation} for $f\in L^\infty_0(\R)$. If $  N \leq 1 $ then \eqref{e:maincontrolmet}  follows immediately from an application of Proposition \ref{sizelemmacompressed} and \eqref{e:usefulp}. We deal with the difficult case $ N > 1. $
To do so, we   select an almost optimal  collection of trees  $ \mathcal T \subset  \mathcal{T}^{J,\star} $ 
covering $A$, that is
\[   A\subset \bigcup_{T \in \T} T, \qquad  	   \sum_{T \in \T}|I_T| \leq 2 N|J|,   \] 
and denote by $\mathcal F=\{(I_T,\xi_T): T\in \mathcal T\}$ the corresponding collection of top data. Relying on the collection $\F$, for a given $f\in L^\infty_0(\R)$, we   produce the decomposition  
\begin{align}
\label{e:mainCZ0} &\;f=g+b, \\
\label{e:mainCZ2}
& \left\|g\right\|_2 \leq C_{t}  |J|^{\frac12}N^{\frac{1}{2}-\frac{1}{tp'}}  [f]_{p,\mathbb{P}}  \\ &   \left\| W[b] \cic{1}_{\mathbb{P} \cap A }\right\|_{L^{\infty}({J,\kappa,\size}_{2,\star})}   \leq C_t  N^{-\frac{1}{p'}}[f]_{p,\mathbb{P}} 
\label{e:mainCZbad}
\end{align} 
where the constant $C_{t}$ depends only on the fixed parameter $t>1$ and is allowed to vary at each occurrence.
With \eqref{e:mainCZ2}-\eqref{e:mainCZbad} in hand, we use quasi-subadditivity of the   $L^{2,\infty}({J,\kappa,\size}_{2,\star})$-quasinorm to obtain
\[
\begin{split}
\left\| W[f] \cic{1}_{\mathbb{P} \cap A} \right\|_{L^{2,\infty}({J,\kappa,\size}_{2,\star})} &\leq 
2\left\| W[g] \cic{1}_{\mathbb{P} \cap A} \right\|_{L^{2,\infty}({J,\kappa,\size}_{2,\star})} + 2\left\| W[b] \cic{1}_{\mathbb{P} \cap A} \right\|_{L^{2,\infty}({J,\kappa,\size}_{2,\star})}
\\ & \leq 2 \left\| W[g]   \right\|_{L^{2,\infty}({J,\kappa,\size}_{2,\star})} + 4 
\left\| W[b] \cic{1}_{\mathbb{P} \cap A} \right\|_{L^{\infty}({J,\kappa,\size}_{2,\star})} N^{\frac12}
\\
  &\leq C|J|^{-\frac12}\left\|g\right\|_2   
 + C_t  N^{\frac12-\frac{1}{p'}}[f]_{p,\mathbb P} \leq C_t N^{\frac12-\frac{1}{tp'}} [f]_{p,\mathbb P}.
\end{split}
\]
To pass to the second line, we have employed monotonicity on both terms  and  \eqref{e:usefulp}. The subsequent bound follows from an application of Proposition \ref{usual}, in particular \eqref{usual2} and by taking advantage of \eqref{e:mainCZbad}, while the final estimate is a consequence of \eqref{e:mainCZ2}. This completes the proof of \eqref{e:maincontrolmet}, and in turn of Theorem \ref{mainemb}, up to actual construction of the splitting $f=g+b$ with properties \eqref{e:mainCZ2}-\eqref{e:mainCZbad}. This task is conducted in the upcoming paragraphs.
The first step towards \eqref{e:mainCZ2}-\eqref{e:mainCZbad}  is to construct a suitable collection of  {minimal space-frequency tiles} adapted to the collection $\mathbb P$. To do so, take  \[ \mathcal{J}=\left\{J \in \D:  J=I_P \, \textrm{for some}\, P\in \mathbb P\right\} \] in \eqref{czintervals}. The choice of the constant $K\geq 1$ depends on $N,p$ and $t$ and will be made explicit in \eqref{e:choiceK} below.
From now on, $\mathbb M=\mathbb M(\mathcal J)$ refers to the  collection obtained from \eqref{e:cztiles} for this choice of $\mathcal J,K$. Note that the spatial components of the tiles in $\mathbb M$ come from the collection $   \mathrm{CZ}_K(\mathcal{J}) $. This fact will be employed in the proof quite a few times.

Below, the notation $ T $ is used, with meaning clear from context, for both the top data pair itself $ T= (I_T,\xi_T)\in \F $ and to the set $ T= T(I_T,\xi_T)=  \{P \in \mathbb{P}: I_P \subset I_T, \; \xi_T \in \omega_{P}^{\mathsf{p}(\kappa)} \} .$ The collection of top data $\F$ induces a certain decomposition of the minimal tiles $\mathbb M$, as follows. First, the \emph{principal region} $\mathbb Q$ is defined by 
\begin{equation} \label{e:pr1}
\begin{split}
&  \mathbb{Q}= \bigcup_{T\in \mathcal F} \left\{Q \in \mathbb{M}:  {\scl{(Q)}}|\inf{\omega_Q}-\xi_T| \leq  {K}, I_Q \subset 3KI_T \right\}.
\end{split}\end{equation}
Each  $ T=(I_T,\xi_T ) \in \F$ then partitions the   \emph{tail region} $\mathbb{M}\setminus \mathbb Q$   into the two components  \begin{equation} \label{e:pr12}\begin{split}\mathbb{Q}'(T)&\coloneqq \left\{Q \in \mathbb{M}\setminus \mathbb{Q} :  \scl(Q) |\inf\omega_Q-\xi_T|>{K} \right\},   \\  \mathbb{Q}''(T)&\coloneqq \left\{Q \in \mathbb{M} \setminus \mathbb{Q}: \scl(Q) |\inf\omega_Q-\xi_T|\leq {K}\right\}
	\end{split} \end{equation}   roughly corresponding  to the frequency tails and spatial tails with respect to $T$. The definitions guarantee that $ \mathbb{M}= \mathbb{Q} \sqcup \mathbb{Q}'(T) \sqcup \mathbb{Q}''(T)$ for each $T\in \mathcal F.  $

\subsection{Space-frequency tail estimates} \label{sec:tailestimates}

 The following technical lemma, via a suitable decomposition, shows how the action of the  (adjoint)  frequency tails projection  $ \Pi_{\mathbb{Q}'(T)}$ on wave packets localized to $T$  is exponentially small in the separation parameter $K$.

\begin{lemma} \label{adjprojdecomp}  Let $M$ be a large integer. There exists a  positive constant $C=C(M)$ and a decomposition \[ \Pi_{\mathbb{Q}'(T)}^* =  \Pi_{\mathbb{Q}'(T)}^{*,\mathsf{avg}} + \Pi_{\mathbb{Q}'(T)}^{*,\mathsf{osc}}  \]  with the  following properties.
  \vskip1mm \noindent \emph{(i)} For each pair $ f,g \in L^2(\R) $, there exists $  h\in L^2(\R) $ such that $ |h| \leq  C |f| $ and  
\begin{align}
	 \label{propertyinfty}  \l f, \Pi_{\mathbb{Q}'(T)}^{*,\mathsf{avg}}g \r = K^{-M}\l h,g \r .
\end{align}
\vskip1mm \noindent \emph{(ii)}
If  $I\in \mathcal D$, the pointwise inequality
 \begin{equation}
	\label{propopolywocpt} \sum_{\substack{P \in T\\ I_P \subset I}} \left|\Pi_{\mathbb{Q}'(T)}^{*,\mathsf{osc}}    \phi_P \right| \leq C  K^{-\frac{M}{10}} \chi_{I}^{\frac{M}{10}}
\end{equation}
holds for each $L^\infty$-normalized collection $\{ \phi_P :P \in T, |I_P|^{-1} \phi_P  \in \Theta_{1}^M(I_P,\xi_T) \}$.
\end{lemma}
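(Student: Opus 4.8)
The plan is to decompose the kernel of $\Pi_{\mathbb{Q}'(T)}^*$ according to the scale separation between a frequency-tail tile $Q\in\mathbb{Q}'(T)$ and a tile $P\in T$, then split each $\eta_Q$-piece into a "DC part" (carrying the $\widehat{\eta}(0)=1$ contribution) and an oscillatory part. Concretely, first I would fix $Q\in \mathbb{Q}'(T)$ and recall $\Pi_Q^\ast h = \cic{1}_{I_Q}\cdot (h\ast \widetilde{\eta_Q})$ where $\widetilde{\eta_Q}(x)=\overline{\eta_Q(-x)}$, so that $\widehat{\widetilde{\eta_Q}}$ is supported in $\omega_Q$, is $L^\infty$-normalized, and has $\mathcal C^M$ bumps at scale $\scl(Q)$. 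Using an even Schwartz function $\beta$ with $\cic{1}_{[-1/2,1/2]}\le\widehat\beta\le\cic{1}_{[-1,1]}$, write $\widehat{\widetilde{\eta_Q}}(\xi) = \widehat{\widetilde{\eta_Q}}(\xi)\widehat{\beta}\big((\xi-\inf\omega_Q)\scl(Q)\big)$ and split $\widehat{\beta}(\cdot)=\widehat{\beta}(0)+[\widehat{\beta}(\cdot)-\widehat{\beta}(0)]$; equivalently split $\widetilde{\eta_Q}=a_Q\,\mathrm{Mod}_{\inf\omega_Q}\mathrm{Dil}^1_{\scl(Q)}\beta + \rho_Q$ where $a_Q\in\mathbb C$, $|a_Q|\lesssim 1$, and $\rho_Q$ has a cancellation: $\widehat{\rho_Q}(\inf\omega_Q)=0$, hence $\rho_Q = \scl(Q)^{-1}\mathrm{Mod}_{\inf\omega_Q}\sigma_Q'$ with $\sigma_Q\in C_M\Theta_0$-type bump at scale $\scl(Q)$ — in any case a one-order-smoother-decaying bump with a derivative gain. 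This induces $\Pi_{\mathbb{Q}'(T)}^{*}=\Pi_{\mathbb{Q}'(T)}^{*,\mathsf{avg}}+\Pi_{\mathbb{Q}'(T)}^{*,\mathsf{osc}}$ by summing the two pieces over $Q\in\mathbb{Q}'(T)$; the spatial cut $\cic{1}_{I_Q}$ stays attached to both.

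For part (i): in $\langle f,\Pi_{\mathbb{Q}'(T)}^{*,\mathsf{avg}}g\rangle = \sum_{Q\in\mathbb{Q}'(T)}\overline{a_Q}\langle f\cic{1}_{I_Q}, g\ast(\mathrm{Mod}_{\inf\omega_Q}\mathrm{Dil}^1_{\scl(Q)}\beta)\rangle$, I would move the convolution onto $f$ and observe that for fixed $Q$ the defining inequality $\scl(Q)|\inf\omega_Q-\xi_T|>K$ together with the frequency support of the bump (width $\sim 1/\scl(Q)$ around $\inf\omega_Q$) means this whole bump is supported, in frequency, at distance $\gtrsim K/\scl(Q)$ from $\xi_T$; but $g$ here plays the role of a generic $L^2$ function, so the gain must come differently. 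The cleaner route for (i) is to not extract a $K$-gain tile-by-tile but to use that the collection $\{\mathrm{CZ}_K(\mathcal J)\ni I_Q\}$ at each fixed scale has bounded overlap of the $3I_Q$'s, and for $Q$ in the tail region $\mathbb{Q}'(T)$ a summation over the frequency index $\inf\omega_Q$ with $\scl(Q)|\inf\omega_Q-\xi_T|>K$ of the $L^\infty$-normalized bumps $\mathrm{Dil}^1_{\scl(Q)}\beta$ produces, after summing the geometric-type tails coming from Schwartz decay of $\beta$ beyond radius $K$, a function pointwise bounded by $C K^{-M}$ times a single averaging operator; setting $h$ to be $CK^{M}$ times this resulting average of $f$ against the $I_Q$-localized bumps gives $|h|\le C|f|$ (the operator is an average, so $L^\infty\to L^\infty$ with constant $O(1)$, and the gain $K^{-M}$ has been pulled out explicitly). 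I would phrase $h$ precisely as $h = \sum_Q (\text{stuff})$ expanded via Fubini so that the identity $\langle f,\Pi_{\mathbb{Q}'(T)}^{*,\mathsf{avg}}g\rangle = K^{-M}\langle h,g\rangle$ is exact.

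For part (ii): now both $P\in T$ with $I_P\subset I$ and the relevant $Q$ carry cancellation or scale separation. Write $\Pi_{\mathbb{Q}'(T)}^{*,\mathsf{osc}}\phi_P = \sum_{Q\in\mathbb{Q}'(T)}\cic{1}_{I_Q}\,(\phi_P\ast\rho_Q)$. There are two mechanisms. Spatial: $\phi_P$ is an $L^\infty$-normalized bump at scale $\scl(P)$ centered in $I_P$ with rapid decay, $\rho_Q$ similarly at scale $\scl(Q)$, so $\phi_P\ast\rho_Q$ is small unless $I_P$ and $I_Q$ are within $O(1)$ dilates — and for those $Q$ with $3KI_Q\not\supset$ any spatial interval of $\mathbb P$, in particular $I_P$, property (iii) of $\mathrm{CZ}_K$ gives $I_Q\subset\R\setminus 3KI_P$ whenever $I_Q\not\subset 9KI_P$, yielding decay $\chi^{M}_{I_P}$ evaluated at distance $\gtrsim K\ell_{I_P}$, i.e.\ a factor $K^{-M}$. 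Frequency: when the spatial supports do interact, $Q\in\mathbb{Q}'(T)$ forces $\scl(Q)|\inf\omega_Q-\xi_T|>K$, while $|I_P|^{-1}\phi_P\in\Theta_1^M(I_P,\xi_T)$ has frequency concentration at $\xi_T$ at width $1/\scl(P)$; comparing with the width $1/\scl(Q)$ support of $\widehat{\rho_Q}$ near $\inf\omega_Q$, the product $\widehat{\phi_P}\widehat{\rho_Q}$ is bounded by $(K)^{-M}$ times the number of $\rho_Q$-type bumps, after using the derivative gain in $\rho_Q$ against the smoothness of $\phi_P$ in the regime $\scl(P)\ge\scl(Q)$, and using Schwartz decay of $\widehat{\phi_P}$ away from $\xi_T$ in the regime $\scl(P)<\scl(Q)$. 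Summing over $Q$ at each fixed scale (bounded spatial overlap of $\cic{1}_{I_Q}$, geometric-tail summable frequency indices) and then over scales of $Q$ (geometric in the scale ratio, because in one regime we gain powers of $\scl(Q)/\scl(P)$ and in the other powers of $\scl(P)/\scl(Q)$, so the cross-over is at comparable scales and the series converges), and finally over $P\in T$ with $I_P\subset I$ using \eqref{e:spectree} to collapse to at most $2^\kappa$ tiles per spatial interval and then summing the tails of $\chi^{M}_{I_P}$ over dyadic $I_P\subset I$ against the single envelope $\chi^{M/10}_{I}$, one arrives at the claimed bound $CK^{-M/10}\chi_I^{M/10}$, the $M/10$ being a crude but honest accounting of how many powers of $M$ are consumed converting between scales, spatial decay, and frequency decay.

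The main obstacle I expect is the bookkeeping in (ii): one must simultaneously juggle (a) which of the two tail mechanisms (spatial separation via $\mathrm{CZ}_K$ property (iii), or frequency separation via the defining inequality of $\mathbb{Q}'(T)$) is active for a given $(P,Q)$ pair, (b) the two scale regimes $\scl(P)\gtrless\scl(Q)$, and (c) the need to sum over all $Q$ and then all $P\in T$ with $I_P\subset I$ while ending up with the clean single-bump envelope $\chi_I^{M/10}$ — in particular making sure the geometric series in the scale ratio genuinely converges rather than merely being bounded scale-by-scale. The cleanest implementation is to treat $(P,Q)$ pairs in three buckets: $3KI_Q\supset I_P$ is impossible by construction since $I_P\in\mathcal J$; $I_Q\subset 9KI_P$ (spatially close), handled by frequency separation and the derivative/smoothness trade; and $I_Q\subset\R\setminus 3KI_P$ (spatially far), handled by pure spatial decay. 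Part (i) is comparatively routine once one commits to defining $h$ as an explicit average extracted via Fubini.
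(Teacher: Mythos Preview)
Your decomposition centers the split at the wrong frequency, and this breaks property (i). You expand each $\widetilde{\eta_Q}$ around $\inf\omega_Q$, so that your ``average'' piece is $\sum_Q a_Q\,\cic{1}_{I_Q}\big(\cdot\ast\mathrm{Mod}_{\inf\omega_Q}\mathrm{Dil}^1_{\scl(Q)}\beta\big)$ with $|a_Q|\sim 1$. Tracing the adjoint, property (i) is equivalent to the pointwise bound $\big|\sum_G (f\cic{1}_G)\ast U_G^{\mathsf{avg}}\big|\leq CK^{-M}|f|$, where $U_G^{\mathsf{avg}}$ is the summed avg kernel at spatial interval $G$. But $U_G^{\mathsf{avg}}$ is, on the Fourier side, a bounded multiplier supported on $\{|\xi-\xi_T|\gtrsim K/\ell_G\}$ --- a high-pass filter relative to $\xi_T$. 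Testing on $f=\mathrm{Mod}_{\xi_0}\cic{1}_G$ with $|\xi_0-\xi_T|\sim 2K/\ell_G$ gives $(f\cic{1}_G)\ast U_G^{\mathsf{avg}}\approx f$, of size $1$, not $K^{-M}$. There is no Schwartz tail to sum: your $\widehat\beta$ is compactly supported, and the modulated bumps at different $\inf\omega_Q$ do not cancel. So your $\Pi^{*,\mathsf{avg}}$ simply does not carry a $K^{-M}$ gain.

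The paper's decomposition fixes this by Taylor expanding the \emph{aggregated} symbol $\widehat{Z_G}=\sum_{Q\in\mathbb{Q}'(T)[G]}\widehat{\eta_Q}$ at the tree frequency $\xi_T$, not at the individual $\inf\omega_Q$. The zeroth-order piece is then the pure multiplication operator $f\mapsto\sum_G\overline{\widehat{Z_G}(\xi_T)}\cic{1}_Gf$, and because every $\widehat{\eta_Q}$ in the sum has frequency support at distance $>K/\ell_G$ from $\xi_T$, one has $|\widehat{Z_G}(\xi_T)|\lesssim K^{-M}$ directly. This is what makes (i) a one-line verification. For (ii), the higher Taylor terms $\partial^\nu\widehat{Z_G}(\xi_T)(\xi-\xi_T)^\nu$ produce clean factors $(\ell_G/\ell_{I_P})^\nu$ when paired with $\widehat{\phi_P}$ (concentrated at $\xi_T$ at width $1/\ell_{I_P}$), and the $\mathrm{CZ}_K$ property $\ell_G\lesssim K^{-1}\ell_{I_P}$ supplies the decay; compact spatial support is arranged via the expansion of Remark~\ref{waveletdecomp}. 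Your frequency-separation mechanism for (ii) is salvageable in spirit, but the bookkeeping you describe would anyway cover the full $\widetilde{\eta_Q}$, which shows that your split is not actually doing work --- the oscillatory piece $\rho_Q$ with $\widehat{\rho_Q}(\inf\omega_Q)=0$ brings no gain relative to the tree frequency $\xi_T$, which is the only frequency that matters here.
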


\begin{proof}
	 With the notation of  \eqref{e:WG}, \[ \Pi_{\mathbb{Q}'(T)}^{*}=\sum_{G \in \mathrm{CZ}_K(\mathcal{J})} \left[ f* {\big(\overline{{Z_G}}(-\cdot)\big)}\right] \cic{1}_G ,\qquad {Z_G}\coloneqq\sum_{Q \in \mathbb{Q}'(T)[G]}\eta_Q. \]
	The claimed decomposition is   \begin{equation}\label{projdeco}\begin{split}
		&	\Pi_{\mathbb{Q}'(T)}^{*,\mathsf{avg}}f \coloneqq \sum_{G \in \mathrm{CZ}_K(\mathcal{J})} \overline{\widehat{{Z_G}}(\xi_T)} \left(\cic{1}_Gf\right), \qquad \Pi_{\mathbb{Q}'(T)}^{*,\mathsf{osc}} \coloneqq \Pi_{\mathbb{Q}'(T)}^{*,{\mathsf{poly}_n}}+\Pi_{\mathbb{Q}'(T)}^{*,{\mathsf{cancel}_n}} \\
		 & \Pi_{\mathbb{Q}'(T)}^{*,{\mathsf{poly}_n}}f \coloneqq \sum_{G \in \mathrm{CZ}_K(\mathcal{J})} \cic{1}_G \int \Mod_{x}\widehat{f} \overline{\left[ {\mathsf{P}_{n}^{\xi_T}}(\widehat{{Z_G}}) -\widehat{{Z_G}}(\xi_T)\cic{1}_\R\right]} 
		 \\ & \Pi_{\mathbb{Q}'(T)}^{*,{\mathsf{cancel}_n}}f \coloneqq \sum_{G \in \mathrm{CZ}_K(\mathcal{J})} \cic{1}_G \int \Mod_x \widehat{f}\overline{ \left[\widehat{{Z_G}}-{\mathsf{P}_{n}^{\xi_T}}(\widehat{{Z_G}})\right]} 
\end{split}
	\end{equation}  where $  n \coloneqq\frac{M}{10} $ and $ {\mathsf{P}_{n}^{\xi_T}}(\widehat{{Z_G}}) $ is the order $n$ Taylor polynomial of $\widehat{{Z_G}}$ centered at $ \xi_T. $
First, observe that \eqref{propertyinfty} follows by taking advantage of the trivial estimate 
\begin{equation}\label{e:derestg}
\left|\partial^{\nu}(\widehat{{Z_G}})(\xi_T)\right| \lesssim_{M,\nu} K^{-M} (\ell_G)^{\nu}, 
\end{equation} 
and subsequently setting  \begin{equation}\label{avgest}
	h \coloneqq \frac{1}{K^{-M}}\sum_{G \in \mathrm{CZ}_K(\mathcal{J})} \widehat{{Z_G}}(\xi_T)(\cic{1}_Gf) \Rightarrow |h| \lesssim_{M} |f|.
\end{equation}  As an intermediate step towards \eqref{propopolywocpt}, we  first prove a preliminary result  under a temporary  spatial compact support  assumption. Namely, for $  \mathsf{i} \in\left\{ \mathsf{poly_n,cancel_n}\right\} ,$  $ \Pi_{\mathbb{Q}'(T)}^{*,\mathsf{i}} $ has the property that  if $  \varphi_P \in \Theta_{\iota}^M(I_P,\xi_T) $ with $  \supp \varphi_P \subset \Delta I_P $ \begin{align}\label{propertypoly}
	&	 \left|\Pi_{\mathbb{Q}'(T)}^{*,{\mathsf{poly}_n}}\varphi_P \right| \lesssim \ell_{I_P}^{-1} \Delta K^{-M}  
		  \sum_{m=1}^{n} \sum_{G \in \mathrm{CZ}_K(\mathcal{J}),3G \cap \Delta I_P \neq \varnothing } \cic{1}_G  \left(\frac{\ell_G}{\ell_{I_P}}\right)^{m} \\
	& \label{propertycancel}  \left|\Pi_{\mathbb{Q}'(T)}^{*,{\mathsf{cancel}_n}}\varphi_P\right|  \lesssim \ell_{I_P}^{-1} \Delta \sum_{G \in \mathrm{CZ}_K(\mathcal{J}),3G \cap \Delta I_P \neq \varnothing} \cic{1}_G \left(\frac{\ell_G}{\ell_{I_P}}\right)^{n+1}  \\
	\end{align}
We check \eqref{propertypoly}.  The adaptation 
$\left|\widehat{\varphi_P}\right| \lesssim \Delta \chi_{\omega_{\xi_T,P}}^{M}(\xi)$ allows us to estimate  \begin{equation}\label{calculation}
	 \left\|\left(\xi-\xi_T\right)^{j} \widehat{\varphi_{P}}\right\|_{1}\lesssim  \left( \int_{|\xi-\xi_T| \leq  \ell_{\omega_P}}|\xi-\xi_T|^j+\int_{|\xi-\xi_T|> \ell_{\omega_P}} |\omega_P|^{M} |\xi-\xi_T|^{j-M}\right) \lesssim K  (\ell_{I_P})^{-(j+1)}     \end{equation} for $ 1 \leq j \leq n+1 $.
 Combining \eqref{calculation} with \eqref{e:derestg} yields \eqref{propertypoly}. Finally, an application of \eqref{calculation} for $ j=n+1 $ yields \eqref{propertycancel}.
In order to prove \eqref{propopolywocpt}, apply   Remark \ref{e:refwpsreldec} to write $ \phi_P $ as rapidly decaying superposition of wave packets with compact support and use the intermediate estimate \eqref{propertypoly}  as in  \begin{equation}\label{oscterm}
	 \begin{split}
&	\quad \sum_{k \geq 0} 2^{-\frac{M}{3}k}\sum_{P \in T} \left|\Pi_{\mathbb{Q}'(T)}^{*,\mathsf{osc}}\ \phi_{P,k,\xi_T}  \right|\\ & \lesssim  \sum_{k \geq 0}2^{-\frac{M}{3}k} 2^{k(n+1)}  \sum_{P \in T} \sum_{\substack{G \in \mathrm{CZ}_K(\mathcal{J}) \\ 3G \subset 3 \cdot2^kI_P}}\cic{1}_G \max\left\{K^{-M}, \left(\frac{\ell_G}{2^k \ell_{I_P}}\right)^{n} \right\} \left(\frac{\ell_G}{2^k \ell_{I_P}}\right) \\ & \lesssim K^{-\frac{M}{10}} \sum_{k \geq 0}2^{-\frac{M}{5}k} \sum_{\substack{G \in \mathrm{CZ}_K(\mathcal{J}) \\ 3G \subset 6 \cdot 2^k I }} \cic{1}_G \sum_{\substack{P \in T \\ 3G \subset 3 \cdot 2^k I_P}} \left(\frac{\ell_G}{2^k \ell_{I_P}}\right).
\end{split}
\end{equation}
The proof of \eqref{propopolywocpt} is then completed by summing up, and taking advantage of the next two observations.  First, when $G \in \mathrm{CZ}_K(\mathcal{J})$ and $j \in \Z $ the   cardinality estimate \[\# \left\{P \in T: 3G \subset 3 \cdot 2^k I_P, \scl(P)=2^j \right\} \lesssim (k+2)2^k \] holds  uniformly in   $ j,G $. Next, when $ G \in \mathrm{CZ}_K(\mathcal{J}) $, $ J \in \mathcal{J} $ and  $  3G \cap 2^kJ \neq \varnothing$,  then  $  G \subset 3 \cdot  2^k J $ necessarily,  and in particular  $  3K^2 {\ell_{G}} \leq {\ell_{2^kJ}}   .$ Therefore,  the counting  estimate in the last display allows us to perform a single scale analysis in the innermost sum of \eqref{oscterm}, and using the disjointness of $  G \in \mathrm{CZ}(\mathcal{J}) $, we can estimate \eqref{oscterm} by \begin{equation}\label{oscfinal}
	 K^{-\frac{M}{10}} \sum_{k \geq 0} 2^{-\frac{M}{6}k} \cic{1}_{2^{k+3}I} \lesssim K^{-\frac{M}{10}} \chi_{I}^{\frac{M}{10}}
\end{equation}  Finally, the proof of the lemma  is finished  by taking $  C(M) $ to be the larger of the two implied constants   in \eqref{avgest}, \eqref{oscfinal}.  
\end{proof}
\begin{lemma} \label{off}  If $  f \in L^{\infty}_0(\R) $ and  $ T\in \mathcal F $ there holds \[ \size_{2,\star,\kappa}(W[\Pi_{\mathbb{M} \setminus\mathbb{Q}}(f)],T   ) \lesssim K^{-\frac{M}{20}} [f]_{1,\mathbb{P}}. \]
	
\end{lemma}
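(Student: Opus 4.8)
The goal is to estimate $\size_{2,\star,\kappa}(W[\Pi_{\mathbb M\setminus\mathbb Q}(f)],T)$ for a fixed $T=(I_T,\xi_T)\in\F$. Recall that $\mathbb M\setminus\mathbb Q = \mathbb Q'(T)\sqcup\mathbb Q''(T)$, so I would split $\Pi_{\mathbb M\setminus\mathbb Q}f = \Pi_{\mathbb Q'(T)}f + \Pi_{\mathbb Q''(T)}f$ and treat the two pieces separately. By the definition of $\size_{2,\star,\kappa}$ in \eqref{e:sizestar} and the characterization through lacunary subtrees, it suffices to bound $|I_U|^{-1/2}\|\langle \Pi_{\mathbb M\setminus\mathbb Q}f,\phi_P\rangle\cic 1_U(P)\|_{\ell^2_P}$ uniformly over lacunary subtrees $U\subset T$, for suitably chosen $\phi_P\in\Phi(P)$ with $|I_P|^{-1}\phi_P\in\Theta_1^M(I_P,\xi_T)$ (this is legitimate since for $P$ in a $\kappa$-tree with top frequency $\xi_T$ one has $\xi_T\in\omega_P^{\mathsf p(\kappa)}$). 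So the two quantities to control are $\langle \Pi_{\mathbb Q'(T)}f,\phi_P\rangle = \langle f,\Pi_{\mathbb Q'(T)}^*\phi_P\rangle$ and $\langle f,\Pi_{\mathbb Q''(T)}^*\phi_P\rangle$.

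\textbf{The frequency-tail piece.} For $\Pi_{\mathbb Q'(T)}$ I would invoke Lemma \ref{adjprojdecomp}, writing $\Pi_{\mathbb Q'(T)}^*\phi_P = \Pi_{\mathbb Q'(T)}^{*,\mathsf{avg}}\phi_P + \Pi_{\mathbb Q'(T)}^{*,\mathsf{osc}}\phi_P$. For the oscillatory part, the pointwise bound \eqref{propopolywocpt} applied with $I=I_U$ (after reducing to $P\in U$, so $I_P\subset I_U$) gives $\sum_{P\in U}|\Pi_{\mathbb Q'(T)}^{*,\mathsf{osc}}\phi_P|\lesssim K^{-M/10}\chi^{M/10}_{I_U}$, hence by Cauchy--Schwarz in $P$ and the almost orthogonality / disjoint frequency support of the $\phi_P$ over a lacunary tree, the contribution to the $\ell^2_P$ sum is $\lesssim K^{-M/10}[f]_{1,\mathbb P}|I_U|^{1/2}$ (absorbing the $\chi$-tails against $\mathrm M f$ and using $[f]_{1,\mathbb P}$ to control the local averages against $|f|$). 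For the average part, \eqref{propertyinfty} lets me replace $\langle f,\Pi_{\mathbb Q'(T)}^{*,\mathsf{avg}}\phi_P\rangle$ by $K^{-M}\langle h,\phi_P\rangle$ with $|h|\lesssim|f|$; then the map $h\mapsto (\langle h,\phi_P\rangle)_{P\in U}$ is, on a lacunary tree, a bounded operator into $\ell^2$ with the usual tree $L^2$ estimate $\size_2(W[h],U)\lesssim\llangle h\rrangle_{2,I_U}$ (cf.\ \eqref{e:wpt0} and Proposition \ref{usual}), which is $\lesssim\llangle f\rrangle_{2,I_U}$; but since we only have $f\in L^\infty_0$ with local $L^1$ control, I instead route this through the weak-$(1,1)$ / localization argument already used for $H_T$ in the proof of Proposition \ref{sizelemmacompressed}, obtaining $\size_2(W[h],U)\lesssim [f]_{1,\mathbb P}$, and the prefactor $K^{-M}$ makes this negligible.

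\textbf{The spatial-tail piece.} For $\Pi_{\mathbb Q''(T)}$ the frequencies of the tiles $Q$ are at distance $\le K/\scl(Q)$ from $\xi_T$, so these $Q$ are frequency-close to the tree but their spatial intervals $I_Q$ lie outside $3KI_T$ (since $Q\notin\mathbb Q$). Thus $\Pi_{\mathbb Q''(T)}^*\phi_P$ is, roughly, the part of the smooth reconstruction of $\phi_P$ living on Calderón--Zygmund intervals $G$ far from $I_P$: by the CZ-property (iii)--(iv) of \eqref{czintervals} and the rapid spatial decay of $\phi_P$ (together with the $\eta_Q$ decay), one gets $\|\Pi_{\mathbb Q''(T)}^*\phi_P\cic 1_{3I_P}\|$ is tiny and the far tails are summable in $K$; concretely I would use the expansion \eqref{e:refwpsreldec} of $\phi_P$ into compactly supported wave packets $\phi_{P,k,\xi_T}$ on $2^kI_P$ with $2^{-Mk}$ coefficients, note that for $Q\in\mathbb Q''(T)$ with $I_Q$ contributing to scale $2^kI_P$ one needs $I_Q\subset 2^kI_P\setminus\{$something of size $\gtrsim K|I_P|\}$ forcing $2^k\gtrsim K$, and extract a gain $K^{-M/10}$ with the remaining geometric sum converging. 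Combining the three bounds and taking $M$ large (so $M/20\le\min\{M/10,M\}$ after losing constants), one arrives at $\size_{2,\star,\kappa}(W[\Pi_{\mathbb M\setminus\mathbb Q}f],T)\lesssim K^{-M/20}[f]_{1,\mathbb P}$.

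\textbf{Main obstacle.} The delicate point is the average part of the frequency tail: one must extract a genuine $\ell^2_P$ (tree-size) bound for $\langle h,\phi_P\rangle$ while $h$ is only controlled in $L^1_{\mathrm{loc}}$ through $[f]_{1,\mathbb P}$, not in $L^2$. This is exactly the situation resolved by the semi-discrete Calderón--Zygmund operator plus John--Strömberg argument of Proposition \ref{sizelemmacompressed}, so I would import that estimate wholesale; the point is that the $K^{-M}$ (resp.\ $K^{-M/10}$) prefactors coming from Lemma \ref{adjprojdecomp} absorb any polynomial loss, so no sharpness is needed there. The bookkeeping of overlaps among the $G\in\mathrm{CZ}_K(\mathcal J)$ and the interaction of $\chi$-tails with the lacunary structure is routine given properties (i)--(v) of \eqref{czintervals} and \eqref{e:spectree}.
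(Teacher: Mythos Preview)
Your overall strategy—splitting into $\mathbb Q'(T)$ and $\mathbb Q''(T)$, applying Lemma \ref{adjprojdecomp} for the former, and invoking the John--Str\"omberg machinery of Proposition \ref{sizelemmacompressed}—is exactly what the paper does. Two points deserve correction or simplification.

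\textbf{The oscillatory part.} Your ``Cauchy--Schwarz in $P$ and almost orthogonality / disjoint frequency support'' step does not work as stated. The operator $\Pi_{\mathbb Q'(T)}^{*,\mathsf{osc}}$ involves multiplication by the spatial cutoffs $\cic 1_G$, which destroys the frequency localization of $\phi_P$; the functions $\Pi_{\mathbb Q'(T)}^{*,\mathsf{osc}}\phi_P$ are therefore \emph{not} almost orthogonal, and a pointwise bound on $\sum_P|\Pi^{*,\mathsf{osc}}\phi_P|$ does not by itself yield an $\ell^2_P$ bound on the individual pairings. The paper instead routes the \emph{entire} $\mathbb Q'(T)$ piece through the John--Str\"omberg reduction of Proposition \ref{sizelemmacompressed}: one reduces $\size_{2,\star,\kappa}$ to a weak-$L^1$ bound on $\sum_{P\in T',\,I_P\subset I}|I_P|\langle f,\Pi^*_{\mathbb Q'(T)}\phi_P\rangle h_{I_P}$, and then for the $\mathsf{osc}$ summand the pointwise bound \eqref{propopolywocpt} immediately gives an $L^1$ estimate by pairing $|f|$ against $\sum_P|\Pi^{*,\mathsf{osc}}[|I_P|\phi_P]|\lesssim K^{-M/10}\chi_I^{M/10}$. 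For the $\mathsf{avg}$ summand one uses \eqref{propertyinfty} and the weak-$(1,1)$ bound for the semidiscrete operator, exactly as you wrote. So the fix is simply to use for $\mathsf{osc}$ the same John--Str\"omberg route you already invoke for $\mathsf{avg}$.

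\textbf{The spatial tail.} For $\mathbb Q''(T)$ the paper is considerably shorter than your plan. Since $Q\in\mathbb Q''(T)$ satisfies the frequency condition for $T$ but $Q\notin\mathbb Q$, one has $I_Q\not\subset 3KI_T$; the Calder\'on--Zygmund geometry then forces $\supp\Pi_{\mathbb Q''(T)}f\cap KI_T=\varnothing$. Applying Proposition \ref{sizelemmacompressed} directly to $\Pi_{\mathbb Q''(T)}f$ on the tree $T$ (whose tiles all have $I_P\subset I_T$) gives the decay factor $\langle K\rangle^{-2^8}$, and the crude bound $\|\Pi_{\mathbb Q''(T)}f\|_\infty\lesssim K^3[f]_{1,\mathbb P}$ (at most $O(K)$ tiles $Q$ per $G$, finite overlap of $3G$, and property (v)) finishes. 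Your dual-side approach via the expansion \eqref{e:refwpsreldec} of $\phi_P$ can be made to work, but it requires summing over both $P$ and the Calder\'on--Zygmund intervals and is longer.
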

\begin{proof} The proof is carried by splitting $ \Pi_{\mathbb{M}\setminus \mathbb{Q}} $   into frequency and spatial tails. Namely, \[ \Pi_{\mathbb{M}\setminus \mathbb{Q}}f=\Pi_{\mathbb{Q}'(T)}f+\Pi_{\mathbb{Q}''(T)}f,\] and  it suffices to check that \[ \max_{\mathsf{i} \in \left\{\mathsf{',''}\right\}} \size_{2,\star,\kappa}(W[\Pi_{\mathbb{Q}^{\mathsf{i}}(T)}f],T  ) \lesssim K^{-\frac{M}{20}} [f]_{1,\mathbb{P}}  \]
	The case $ \mathsf{i}=\mathsf{'} $ is dealt with first. By identical considerations to those from  the proof of Proposition \ref{sizelemmacompressed}, it suffices to bound \begin{equation}\label{JSnew}
		\left\| \sum_{\substack{P \in T' \\ I_P \subset I}}|I_P| \l f , \Pi^{*}_{\mathbb{Q}'(T)}\phi_P \r h_{I_P} \right\|_{1,\infty} \lesssim K^{-\frac{M}{20}}  [f]_{1,\mathbb{P}} |I| 
	\end{equation} for an arbitrary interval $I\in \mathcal D$, lacunary tree $  T' \subset T $,  and collection $\{ \phi_P\in \Phi(P):P\in T'\} $. To this purpose, Lemma \ref{adjprojdecomp} entails\[ \begin{split}
	&	 \left\| \sum_{\substack{P \in T' \\ I_P \subset I} }|I_P| \l f, \Pi_{Q'(T)}^{*,\mathsf{osc}}\phi_P\r h_{I_P} \right\|_{1} \leq \left\langle |f|, \sum_{\substack{P \in T' \\ I_P \subset I} }   \left|\Pi_{Q'(T)}^{*,\mathsf{osc}} [|I_P| \phi_P ] \right| \right \rangle 
	 \lesssim K^{-\frac{M}{20}} \left\| f \chi_{I}^{\frac{M}{10}} \right\|_{1}  \lesssim  K^{-\frac{M}{20}}|I| [f]_{1,\mathbb{P}}.
	\end{split} \]
Furthermore,  \eqref{propertyinfty} of Lemma \ref{adjprojdecomp} 
may be  used to find $|h| \leq C |f|$ such that the estimate  \[	\left\| \sum_{\substack{P \in T' \\ I_P \subset I}}|I_P| \l f , \Pi^{*,\mathsf{avg}}_{\mathbb{Q}'(T)}\phi_P \r h_{I_P} \right\|_{1,\infty}\lesssim K^{-M}|I| [h]_{1,\mathbb{P}} \lesssim K^{-M} |I| [f]_{1,\mathbb{P}} \] holds. This completes the handling of the term $  \mathsf{i}= \mathsf{'} $.
The term $  \mathsf{i}= \mathsf{''} $ is much easier, The definition of $\mathbb Q''(T)$ guarantees that $ \supp \Pi_{\mathbb{Q}''(T)}f \cap KI_T =\varnothing.  $
Therefore, an application of Lemma \ref{sizelemmacompressed} in the first step yields
\[
\begin{split}
\size_{2,\star,\kappa}(W[\Pi_{\mathbb{Q}''(T)}f],T )  \lesssim K^{-\frac{M}{2}} \left \| \Pi_{\mathbb{Q}''(T)}f \right \|_{\infty}.
\end{split}
\]
while properties (ii) and (v) of the  spatial intervals $\mathrm{CZ}_K(\mathcal{J}) $ guarantee the bound
 \[
		\left \| \Pi_{\mathbb{Q}''(T)}f \right \|_{\infty} \lesssim K \sup_{G \in \mathrm{CZ}_K(\mathcal{J})} \inf_{G} M(f) \lesssim K^3 [f]_{1,\mathbb{P}}. 
\] 
The claim of the lemma for the $\mathbb{Q}''(T)$ component   is then an immediate consequence of the last two displays. 
\end{proof}
\subsection{Conclusion of the proof} \label{sec:mfczd}  The choices
\begin{equation}
K\coloneqq N^{\frac{1}{p'}\left(\frac{10}{M}+\frac{1}{5t'}\right)}, \qquad M \coloneqq M(t) = 10 \lceil 2^8t' \rceil  \label {e:choiceK}
\end{equation} and the 
decomposition \eqref{e:mainCZ0} are  now made explicit. The choice of $ M $, anticipated in Remark \ref{smoothnesstprime}, ensures \eqref{e:mainCZ2}, \eqref{e:mainCZbad} both hold. In view of the minimal tiles expansion \eqref{e:sfd},  set in \eqref{e:mainCZ0} \begin{align}
 g \coloneqq \Pi_{\mathbb{Q}}f, \qquad b \coloneqq \Pi_{\mathbb{M} \setminus \mathbb{Q}}f.
\end{align}
Turn to the verification of \eqref{e:mainCZ2}-\eqref{e:mainCZbad}. For the first, write   \[ \Pi_{\mathbb{Q}}f=\sum_{G \in \mathrm{CZ}_K(\mathcal{J})} \Pi_{\mathbb{Q}\left[G\right]}f=\sum_{G \in \mathrm{CZ}_K(\mathcal{J})}\left(f \cic{1}_G\right)*U_G,\quad U_G\coloneqq\sum_{P \in \mathbb{Q}\left[G\right]} \eta_P.\] A straightforward use of Plancherel's theorem entails that $  \left\| \Pi_{\mathbb{Q}\left[G\right]} \right\|_{2 \to 2}  \lesssim 1. $ Furthermore,    observe that                                          $ \widehat{U_G} $ is a sum of $ \#\mathbb{Q}\left[G\right] $ Schwartz functions uniformly  adapted to disjoint intervals of length $ \ell_G^{-1} $, leading to the estimates \[ \left \| \widehat{U_G} \right \|_1 \lesssim \frac{\# \mathbb{Q}[G]}{\ell_{G}}, \quad \left \| \widehat{U_G} \right \|_{\infty} \lesssim 1 
\implies \left \| \Pi_{\mathbb{Q}\left[G\right]} \right \|_{p \to 2} \lesssim \left(\frac{\# \mathbb{Q}\left[G\right]}{\ell_G}\right)^{\frac{1}{p}-\frac{1}{2}}
 \] where the  implication is obtained   by  log-convexity, Young's inequality,  and finally Riesz-Thorin interpolation of the $(2,1)  $ and $ (2,2) $  estimates.   Preliminarily, also  note        \begin{equation}\label{tilecountergood}
	\# \mathbb{Q}\left[G\right] \leq   3K \inf_{G} \sum_{T \in \T} \cic{1}_{3KI_T} \qquad \forall  G \in \mathrm{CZ}_K(\mathcal{J}).
\end{equation} Estimate \eqref{e:mainCZ2} then follows by combining \eqref{tilecountergood}, the fact that $ \supp \Pi_{\mathbb{Q}\left[G\right]}f \subset 3G $ and the finite overlap   {\color{red}(ii)}  of $     \{3G: G\in  \mathrm{CZ}_K(\mathcal{J}) \}$ in the string of inequalities \[ \begin{split}
	\left \| \Pi_{\mathbb{Q}}f \right \|_2 & \lesssim \left(\sum_{G \in \mathrm{CZ}_K(\mathcal{J}),G \subset 9KJ} \left \| \Pi_{\mathbb{Q}\left[G\right]}f \right \|_2^2  \right)^{\frac{1}{2}} \lesssim  |J|^{\frac{1}{p'}}K^{\frac{5}{2}} \left(\sum_{T \in \T}|I_T|\right)^{\frac{1}{2}-\frac{1}{p'}} [f]_{p,\mathbb{P}} \\ & \lesssim |J|^{\frac{1}{2}}N^{\frac{1}{2}-\frac{1}{tp'}}[f]_{p,\mathbb{P}}
\end{split}\] as claimed.
For the property \eqref{e:mainCZbad}, by Lemma \ref{treestructure} it suffices to check that for each $ T \in \T $ there holds \[ \size_{2,\star}(\Pi_{\mathbb{M} \setminus \mathbb{Q}}f,T) \lesssim N^{-\frac{1}{p'}}[f]_{p,\mathbb{P}}. \] Taking notice of the relation between $M $ and $t$ in \eqref{e:choiceK}, this was proved in Lemma \ref{off}.

	\section{Proof of Theorem \ref{t:A}}  \label{s:pcarl} Fix a tiling $\mathbb S= \mathbb S_{\mathcal D,\mathcal D'} $, $f_j\in L^\infty_0(\R)$, $j=1,2$. The crux of the matter is to establish the estimate
\begin{equation}
\label{e:maintApf1}
\sup_{\mathbb P \subset \mathbb S \textrm{ finite}} \mathsf{C}_{\mathbb{P}} (f_1,f_2) \lesssim \frac{1}{\eps} \left\|\M_{\left(\frac{1}{1-\eps},1\right)}(f_1,f_2) \right\|_1
\end{equation}	
with implied constant independent of $\eps>0$,  referring to the model sums \eqref{e:modelsum}. In fact, if $\mathcal C$  stands for  \eqref{carleson}, in view of \eqref{e:modelred}, the form $\langle \mathcal C f_1, f_2\rangle $ is controlled by the sum of   $\lesssim 1$ terms of the type appearing in the left hand side of \eqref{e:maintApf1}.  The same sparse bound for  the periodic operator \eqref{e:mainspper} then follows from \eqref{e:mainsp1} via a standard transference type argument based on the Stein-Weiss lemma, see e.g \cite[Appendix A]{OSTTW}.

We turn to the proof of \eqref{e:maintApf1}, fixing $0<\eps\leq \frac12$, and a finite collection $\mathbb P \subset \mathbb S$. To unify   notation below, it is convenient to write $q_1=\frac{1}{1-\eps}, q_2=1 $ and $\vec q=(q_1,q_2)$ below.
 Let $ \mathcal Q\subset \mathcal D$  be a partition of $ \R $ with the property that
 $ \supp f_j \subset 3Q $ for $ j=1,2$,  $Q \in \mathcal{Q}. $
For each $  Q \in \mathcal{Q}  $, define   $  \S_0(Q)=\left\{Q\right\} $ and  inductively for $m\geq 0$
\[ 	\begin{split}
&\mathcal{B}(S)\coloneqq \text{maximal } B \in \D \textrm{ with }
	B \subset S\cap   \bigcup_{j=1}^2\left\{  \mathrm{M}_{q_j}\left[f_j \cic{1}_{3S}\right ] >\Theta \l f_j \r_{q,3S}\right\}, \quad  S \in \S_{m}(Q), \\ & 
\mathcal S_{m+1}(Q)\coloneqq \bigcup_{S \in \S_{m}(Q)}\mathcal{B}(S). \end{split}  
.\]
Finish by setting $\mathcal S\coloneqq\bigcup_{Q\in \mathcal Q} \bigcup_{m\geq 0} \mathcal S_{m}(Q) $.
Note that the sets $\{E_S:S\in \mathcal S\}$ defined by 
$E_S\coloneqq S\setminus\left[ \bigcup\{B:B \in \mathcal B(S)\}\right]$ are pairwise disjoint, and 
 the packing condition
\[
	\sum_{B\in \mathcal{B}(S)}|B| \leq \frac{1}{4}|S|, \qquad S\in \mathcal S 
\]
which holds provided   the absolute constant $ \Theta $ is picked suitably large, guarantees that $|S|\leq 2|E_S|$ for all $S\in \mathcal S$. Also, the iterated stopping interval nature of the collection $\mathcal S$ yields  
   \begin{equation}\label{packingstopping}
	\sup_{\substack{I \subset S \\ I \not \subset \bigcup_{B\in \mathcal B(S)}  B } }  \inf_{I}\mathrm{M}_{q_j}f_j  \lesssim  \l f_j \r_{q_j,3S}, \qquad S\in \mathcal S, \; j=1,2.
\end{equation}
Therefore, the partition
\[
\mathbb P=\bigsqcup_{S\in \mathcal S} \mathbb P(S), \qquad \mathbb P(S)\coloneqq \left\{P\in \mathbb P : I_P \subset S, I_P \not \subset  \bigcup_{B\in \mathcal B(S)} B  \right\}
\]
inherits the property
 \begin{equation}\label{e:cons}
  [f_j]_{q_j,\mathbb P(S) } \lesssim \l f_j \r_{q_j,3S}, \qquad S\in \mathcal S, \; j=1,2.
\end{equation}
By virtue of \eqref{e:carlholder}, we may apply  Proposition \ref{p:eHolder} with $  p_1=2q_1'=\frac{2}{\eps}$, $ a=2$ and $ p_2=1 $ in the second step, and pass to the second line through an appeal to Theorem \ref{mainemb} with $t=2$ and $p=q_1$ for $W$, and  Proposition \ref{e:modifemb1} for $A$, obtaining 
\[
\begin{split}
\mathsf{C}_{\mathbb{P}}(f_1,f_2) &
= \sum_{S\in \mathcal S} \mathsf{C}_{\mathbb{P}(S)}(f_1,f_2)  
 \lesssim \frac{1}{\eps }
\sum_{S\in \mathcal S}  |S|
\left \| W[f_1] \cic{1}_{ \mathbb{P}(S)} \right \|_{X_2^{2q_1',\infty}(S,\size_{2,1,\star})}
 \left \| A[f_2] \cic{1}_{ \mathbb{P}(S)} \right \|_{Y^{1,\infty}(S,\size_{\mathsf{C}})}
 \\ & \lesssim  \frac{1}{\eps} \sum_{S\in \mathcal S}   |S| \prod_{j=1,2}[f_j]_{q_j,\mathbb P(S) }\lesssim \frac{1}{\eps} \sum_{S\in \mathcal S}   |E_S| \inf_{E_S} \M_{\vec q}(f_1,f_2) \leq { \frac{1}{\eps}}  \|\M_{\vec q}(f_1,f_2) \|_1.
\end{split}
\]
The middle almost-inequality in the last line  relies on \eqref{e:cons} as well as $|E_S|\gtrsim {  | S  | }$, while the final step is due to the pairwise disjointness of $E_S$, $S\in \mathcal S$. The proof is thus complete.

	\section{Proof of Theorem \ref{multipliersparse}} \label{s:pfbht}
\subsection{Rank 1 forms} This paragraph devises a reformulation, within our framework, of the trilinear forms discretizing multipliers with singularity along a rank one subspace, such as the bilinear Hilbert transforms. Although these date back in essence to the  works of Lacey-Thiele \cite{LTbht1,LTbht2},  they appear in a form closer to ours in \cite{MTT1}. The main change in our definition with respect to the usual one is that our does not involve multi-tiles, at least explicitly, to avoid reformulating outer $L^p$-spaces and use our embedding theorems in the most direct way possible.

Fix $\kappa \geq 1$,  two dual dyadic grids $\mathcal D,\mathcal D'$ and the tiling $\mathbb S=\mathbb S_{\mathcal D,\mathcal D'}$. Our construction, similarly to \cite{MTT1},  relies on the two order relations on $\mathbb S$
\begin{align}
P\lesssim_\kappa  P' &\iff I_P \subset I_{P'}, \;  \omega_{P'}^{\mathsf{p}(\kappa)} \subset   \omega_{P}^{\mathsf{p}(\kappa)}, \qquad \kappa\geq 1, \\
P\lesssim'_\kappa  P'& \iff  P\lesssim_\kappa P' \textrm{ and } P\not \lesssim_{1} P', \qquad \kappa \geq 2.
\end{align}
Note that $\lesssim_\kappa$  has been already defined in  \eqref{e:feffor} and is recalled here for the reader's convenience.   
Let $\kappa\geq 10$,  $\mathbb P$ be a finite subset of $\mathbb S$ with scales separated by a factor of $2^{5\kappa}$, and $\eta=(\eta_1,\eta_2,\eta_3): \mathbb P \to  \mathbb S \times \mathbb S \times  \mathbb S $ have the properties
\begin{itemize}
\item[r1.]   the components $\eta_j :\mathbb P \to  \mathbb S $ are injective maps for $j=1,2,3$;
\item[r2.]   $I_{ \eta_j(P)}=I_P$ for $j=1,2,3$;
\item[r3.] if $P,P'\in \mathbb P$ are such that $\eta_j(P)\lesssim_1 \eta_j(P') $ for some $j\in\{1,2,3\}$ then $\eta_k(P)\lesssim_\kappa \eta_j(P') $ for all $k\in\{1,2,3\}$;
\item[r4.] if $P,P'\in \mathbb P$ are such that $\eta_j(P)\lesssim_1 \eta_j(P') $ for some $j\in\{1,2,3\}$ then $\eta_k(P)\lesssim_\kappa \eta_k (P') $ for all $k\in\{1,2,3\}$, and in fact $\eta_k(P)\lesssim'_\kappa \eta_k (P') $ for at least two indices $k\in\{1,2,3\}$.
\end{itemize}
It is convenient to denote by $\mathbb P_j$, $j\in\{1,2,3\}$ the ranges of $\eta_j$. The  \emph{rank 1 form} of parameter $\kappa$ associated to $\eta$ and $\mathbb Q\subset P$ acts on a triple $f_j\in L^\infty_0(\R)$ by
\begin{equation}
\label{e:tileform}
\Lambda_{\eta,\mathbb Q} (f_1,f_2,f_3)=\sum_{P\in \mathbb Q} |I_P|\prod_{j\in\{1,2,3\}} W[f_j](\eta_j(P))
\end{equation}
where $W$ stands for the wave packet transform \eqref{e:wpt}. 

A typical example of map $\eta$ satisfying r1.\ to r4. and thus  giving rise to   rank $1$ forms is the following.
Let $\Gamma'$ be a  1-dimensional  subspace of $\Gamma=\{\xi\in\R^3: \xi_1+\xi_2+\xi_3 =0\}$ as in the statement of Theorem \ref{multipliersparse} and $\mathcal Q\subset \mathcal D'\times \mathcal D' \times \mathcal D'$ be a finite collection satisfying 
\begin{itemize}
\item[g1.] $\ell_{Q_1}= \ell_{Q_2}=\ell_{Q_3}\in   2^{H\mathbb Z+ h }$ for all $Q=Q_1\times Q_2 \times Q_3\in \mathcal Q$,
\item[g2.] $Q\cap \Gamma \neq \varnothing $ for all $Q\in \mathcal Q$,
\item[g3.] $K \ell_{Q_1}\leq \dist(Q, \Gamma')\leq K^2 \ell_{Q_1}$ for all $Q\in \mathcal Q$.
\end{itemize}
for parameters   $H,K\in \mathbb N$ and $h\in\{0,\ldots, H-1\}$. If $H,K$ are sufficiently large parameters depending on $\Gamma'$,   conditions g1.\ to g3.\ tell us that the collection $\{Q\in \mathcal Q: Q_1=\omega\}$ has at most one element for each $\omega\in \mathcal D'$, see e.g.\ \cite[Lemma 6.2]{MTT1}. If such collection is nonempty, we may then write $Q=\omega \times Q_2(\omega)\times Q_3(\omega)$ for its unique element. Of course, the index   $j=1$ can be replaced by any other index in a symmetric statement.
In this setting, if $\mathbb P=\mathbb P_1$ is a finite subset of $\{P\in \mathbb S: \omega_{P}=Q_1$ for some $Q\in \mathcal Q\}$ the map
\[
\eta:\mathbb P\to \mathbb S \times \mathbb S \times \mathbb S, \qquad \eta(P)=\left( P, I_P\times Q_2(\omega_P),  I_P\times Q_3(\omega_P)\right)
\]
satisfies  r1.\ to r4. The usual model sum reduction of \cite{MTT1} may be then summarized in the statement that the singular multipliers \eqref{e:BHTmod} lie in the closed convex hull of rank 1 forms as defined above, with parameter $\kappa $  chosen sufficiently large   depending on the parameter $K$ in g3. Therefore Theorem \ref{multipliersparse} will follow from the estimate
\begin{equation} \label{e:tilezzz}
 \Lambda_{\eta,\mathbb P}(f_1,f_2,f_3) \leq   \frac{C}{\eps(\vec p)}    \left \| \mathrm{M}_{ \vec{p} }(f_1,f_2,f_3) \right \|_1
\end{equation}
uniformly over all rank 1 forms, for all tuples $\vec p=(p_1,p_2,p_3)$ satisfying the conditions in \eqref{e:BHTrange}. 
Symmetry in the indices $p_1,p_2,p_3$ and a   complex interpolation argument  allow us to restrict ourselves to tackling \eqref{e:tilezzz} in the extremal case $p_1=\frac{1}{1-\eps}, p_2=2=p_3$, for which $\eps(\vec p)=\eps.$  We do so in the next paragraph.
\subsection{Using the wave packet embeddding} We now prove \eqref{e:tilezzz} in the above mentioned extremal case.
By eventually composing $\eta$ with the inverse of $\eta_1$, we   reduce to the case where $\mathbb P=\mathbb P_1$ and $\eta_1$ is the identity map.
Properties r1.\ to r4.\ of the map $\eta$ associated to a rank 1 form  of fixed constant $\kappa$ come into play via the following observation.
If $\varnothing \subsetneq S\subset \mathbb  S$, $Q\in S$, then the set \[
S(Q)\coloneqq\{P\in S: P\lesssim_1 Q\}\] is a $1$-tree  with top $(I_{Q},c_{_{\omega_Q}})$. Property r3.\ tells us that the sets 
$
\eta_j(S(Q))\coloneqq\{ \eta_j(P):P\in \mathbb S(Q) \}
$
are $\kappa$-trees with top $(I_{\eta_j(Q)},c_{_{\omega_{\eta_j(Q)}}}) = (I_{Q},c_{_{\omega_{\eta_j(Q)}}})  $, $j=1,2,3$. Furthermore, property r4.\ and scale separation as in the proof of Lemma \ref{lacpartislac} allows us to decompose
\[
S(Q)=\bigcup_{j=1}^3 S(Q,j)
\]
with each $S(Q,j)$ having the property that  $\eta_k(S(Q,j))$, obviously contained in $\eta_k(S(Q))$, is a lacunary $\kappa$-tree with top $(I_{Q},c_{_{\omega_{\eta_k(Q)}}}) $ for $k\in \{1,2,3\}\setminus \{j\}$. 
In accordance with this property, we define  three new variants of \eqref{e:sizestar}
 on the outer measure space $(\mathbb S^J,\mathcal T^{J,1},{\mu^{J,1}}) $. Setting  for $k=1,2,3$
\[
\size_{2,\star,k}(F, T)\coloneqq \sup\left\{\mathsf{size}_{2}(F\circ \eta_k^{-1}, U): U\subset \eta_k(T), U \textrm{ lacunary } \kappa\textrm{-tree} \right\}, \qquad T \in {{\mathcal T^{J,1}}}
\]
we have the estimate
\begin{equation}
\label{e:holderbht}\size_{1}(F_1F_2F_3, T) \lesssim \prod_{k=1,2,3}\size_{2,\star,k}(F_k, T), \qquad \forall T \in {{\mathcal T^{J,1}}}.
\end{equation} 
This  inequality, proved at the end of this section, is our analogue of the usual \emph{tree estimate}, see e.g.\ \cite[Lemma 7.3]{MTT1}, and it is essentially the only additional piece of machinery we were left to set up. Indeed, if $\mathbb Q\subset P\cap \mathbb S^J$ is arbitrary, \eqref{e:holderbht} allows us to appeal to Proposition \ref{p:eHolder} with the obvious choice of exponents, and obtain the chain of inequalities
\begin{equation}
\label{e:wtih}
\begin{split} &\quad 
\Lambda_{\eta,\mathbb Q} (f_1,f_2,f_3) \leq \left\|  \cic{1}_{\mathbb Q}  \prod_{k=1}^3  \left(W[f_k]\circ\eta_k\right)\right\|_{\ell^1(\mathbb S^J)} \\ & \lesssim \frac{|J|}{\eps} \left\|  \cic{1}_{\mathbb Q}    \left(W[f_1]\circ\eta_k\right)\right\|_{X_2^{\frac{2}{\eps},\infty}(\mathbb S^J,1,\size_{2,\star,1})}  \prod_{k=2,3}  \left\|  \cic{1}_{\mathbb Q}    \left(W[f_k]\circ\eta_k\right)\right\|_{Y^{2,\infty}(\mathbb S^J,1,\size_{2,\star,k})}
\\ & \leq  \frac{|J|}{\eps} \left\|  \cic{1}_{\eta_1(\mathbb Q)}    \left(W[f_1]\right)\right\|_{X_2^{\frac{2}{\eps},\infty}(\mathbb S^J,\kappa,\size_{2,\star})}  \prod_{k=2,3}  \left\|  \cic{1}_{\eta_k(\mathbb Q)}    \left(W[f_k]\right)\right\|_{Y^{2,\infty}(\mathbb S^J,\kappa,\size_{2,\star})}
\\ & \lesssim  \frac{|J|}{\eps}[f_1]_{\frac{1}{1-\eps},\mathbb Q}\prod_{k=2,3}  [f_k]_{2,\mathbb Q}.
\end{split}
\end{equation}
The passage to the third line follows by transport of structure, while for the subsequent step we have applied Theorem \ref{mainemb} and  estimate \eqref{e:2emb}, and used that the spatial components, and thus the corresponding local tile norms on $\mathbb Q$, are invariant under $\eta$.
With \eqref{e:wtih} in hand, a stopping procedure akin to that devised in Section \ref{s:pcarl} easily leads to \eqref{e:tilezzz}. Details are left to the interested reader.
\begin{proof}[Proof of \eqref{e:holderbht}] Let  $T \in {{\mathcal T^{J,1}}}$ be a 1-tree with top $(I_T,\xi_T)$, and $\mathsf{m}(T)$ be the set of those $Q\in T$ which are maximal with respect to $\lesssim_1$. As $\xi_T\in \omega_{P}^{\mathsf{p}(1)}$ for all $P\in T$, it must hold that $I_{Q}\cap I_{Q'}=\varnothing$ whenever  $Q,Q'\in \mathsf{m}(T)$ with $Q\neq Q'$. Clearly, $T$ is the disjoint union of the 1-trees $\{T(Q):Q\in \mathsf{m}(T)\}$. Simply from the definitions and the disjointness we just stressed 
\[
\begin{split} &\quad
\size_{1}(F, T)  \leq \sum_{Q\in  \mathsf{m}(T) }\frac{  |I_Q|\size_{1}(F_1F_2F_3, T(Q)) }{|I_T|} \leq \sup_{Q\in  \mathsf{m}(T) }\size_{1}(F, T(Q)), 
\\  &\sup_{Q\in  \mathsf{m}(T) }\size_{2,\star,k}(F, T(Q)) \leq \size_{2,\star,k}(F, T) \qquad \ 
\end{split}
\]
where the second bound holds for $ k\in\{1,2,3\} $  and follows by obvious inclusion considerations. The last two inequalities tell us that it suffices to prove \eqref{e:holderbht} for $T=T(Q)$. In that case,
\[
\begin{split}
&\quad \size_{1}(F_1F_2F_3, T(Q))  \leq \sum_{j=1}^3  \size_{1}(F_1F_2F_3, T(Q,j)) \\ &\leq  \sum_{j=1}^3  \size_{\infty}(F_j, T(Q,j)) \prod_{k\neq j} \size_{2}(F_k, T(Q,j)) \\ &= \sum_{j=1}^3 \sup_{P\in  T(Q,j) } \size_{2}\left(F_j\circ\eta_{j}^{-1},\eta_j( \{P\}) \right)\prod_{k\neq j} \size_{2}\left(F_k\circ\eta_{k}^{-1},\eta_k( T(Q,j))\right)
\\ &\leq 3  \prod_{k=1^3} \size_{2,\star,k}\left(F_k, T(Q)\right)
\end{split}
\]
as desired. We have used in the last step the lacunarity of $\eta_k( T(Q,j))$  and of the single tile trees  $\{\{P\}:P\in T(Q,j)\}$.  
\end{proof}

 \bibliography{WavePacketAnalysis}{}
\bibliographystyle{amsplain}
\end{document}